\DeclareMathOperator{\Irr}{Irr}
\DeclareMathOperator{\PSU}{PSU}
\DeclareMathOperator{\supp}{supp}
\begin{document}
\setlength{\parindent}{15pt}

\newtheorem{thm}{Theorem}[section]
\newcommand{\thmautorefname}{Theorem}

\numberwithin{equation}{thm}

\newtheorem*{thm*}{Theorem}

\newaliascnt{prop}{thm}
\newtheorem{prop}[prop]{Proposition}
\aliascntresetthe{prop}
\newcommand{\propautorefname}{Proposition}

\newaliascnt{lem}{thm}
\newtheorem{lem}[lem]{Lemma}
\aliascntresetthe{lem}
\newcommand{\lemautorefname}{Lemma}

\newaliascnt{crl}{thm}
\newtheorem{crl}[crl]{Corollary}
\aliascntresetthe{crl}
\newcommand{\crlautorefname}{Corollary}

\newtheorem{thmintro}{Theorem}
\renewcommand\thethmintro{\Alph{thmintro}}
\newcommand{\thmintroautorefname}{Theorem}

\newaliascnt{qstintro}{thmintro}
\newtheorem{qstintro}[qstintro]{Question}
\renewcommand\theqstintro{\Alph{qstintro}}
\aliascntresetthe{qstintro}
\newcommand{\qstintroautorefname}{Question}

\newaliascnt{conj}{thm}
\newtheorem{conj}[conj]{Conjecture}
\aliascntresetthe{conj}
\newcommand{\conjautorefname}{Conjecture}

\theoremstyle{definition}
\newaliascnt{dfn}{thm}
\newtheorem{dfn}[dfn]{Definition}
\aliascntresetthe{dfn}
\newcommand{\dfnautorefname}{Definition}
\newaliascnt{rmk}{thm}
\newtheorem{rmk}[rmk]{Remark}
\aliascntresetthe{rmk}
\newcommand{\rmkautorefname}{Remark}

\newaliascnt{exm}{thm}
\newtheorem{exm}[exm]{Example}
\aliascntresetthe{exm}
\newcommand{\exmautorefname}{Example}

\newaliascnt{qst}{thm}
\newtheorem{qst}[qst]{Question}
\aliascntresetthe{qst}
\newcommand{\qstautorefname}{Question}

\newtheorem*{qst*}{Question}

\title{Relations between values and zeros of irreducible characters of symmetric groups}
\author{Lee Tae Young\\\small Department of Mathematics and Statistics, Binghamton University, Binghamton, NY 13902, USA\\ \small tae.young@binghamton.edu\\ \small ORCID: 0000-0002-6849-3599}

\maketitle

\begin{abstract}
We prove certain polynomial relations between the values of complex irreducible characters of general finite symmetric groups. We use it to find some sets of conjugacy classes such that no finite symmetric group has a complex irreducible character that vanishes at every class in the set. In particular, we show that if $n$ satisfies certain conditions, then $S_n\setminus \{1\}$ cannot be covered by the set of zeros of three irreducible characters. We also prove that the values of character of $2$-defect zero can be expressed as rational functions in $n$, and build a recursive algorithm to find these rational functions. As another application, we improve a result by A. Miller on identification of irreducible characters by checking small number of values.
\end{abstract}

\textbf{2020 Mathematics Subject Classification: 20C30, 20C15, 20B30} 

\textbf{Keywords: Symmetric group, Character table, Zeros of characters} 

\tableofcontents

\section{Introduction}
Character tables of finite groups are known to have many zeros. A well-known theorem of Burnside says that every nonlinear irreducible complex character of every finite group has at least one zero. If an irreducible character has $p$-defect zero for some prime $p$, it vanishes at every element of order divisible by $p$. For symmetric groups, the Murnaghan-Nakayama rule tells us that whenever there is no border-strip tableaux for the paritions corresponding to the irreducible character and the conjugacy class, that character vanishes at that class. It is conjectured \cite{MS} that almost all zeros on the character tables of symmetric groups occur this way. Also, if the character corresponds to a self-conjugate partition, then it vanishes at all odd permutations.

However, it is not easy to locate or count zeros on the character table, unless we restrict to specific situations such as the ones mentioned above. For symmetric groups, although there are several formulas and algorithms that compute character values, such as Murnaghan-Nakayama rule and Frobenius formula, none of them gives any characterization of general zeros. 

In this paper, we are interested in the following question, in the case of symmetric groups.
\begin{qstintro}[G. Navarro\cite{Navarro}]\label{ThrChar}
For which finite groups $G$ there exist complex irreducible characters $\chi_1,\chi_2,\chi_3$ such that $\chi_1\chi_2\chi_3(g)=0$ for all $g\in G\setminus \{1\}$?
\end{qstintro}
There are several characters with many zeros, such as those of $p$-defect zero for a small prime $p$ and those corresponding to self-conjugate partitions, which vanishes much more than $1/3$ of the classes. However, to answer this question, we do not just need to count the zeros of irreducible characters, but also have to study how the zeros are distributed. 

Our first main result gives polynomial relations between values of irreducible characters of symmetric groups. This will be restated as \autoref{char_from_cycles} and \autoref{any_set}. It can also be viewed as a slight improvement of a result by Miller \cite[Theorem 3.8]{Miller} with a different proof; this is explained in \autoref{Mil}.
\begin{thmintro}\label{mainthm1}
Let $\lambda$ be a partition of a positive integer $m$. There exists a polynomial $T_\lambda$ of at most $m$ variables over the rational function field $\mathbb{Q}(x)$ such that for any complex irreducible character $\chi$ of the symmetric group $S_n$ and for an element $g\in S_n$ of cycle type $\lambda$, $\chi(g)/\chi(1)$ can be computed by plugging in $x=n$ and the values of $\chi/\chi(1)$ at cycles of length at most $m$ into the variables of the polynomial $T_\lambda$. 

This remains true if we replace each cycle by any other class that can be obtained as a composition of $k-1$ transpositions but not of any smaller number of transpositions, where $k$ is the length of the cycle.
\end{thmintro}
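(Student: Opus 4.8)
The plan is to deduce the statement from \autoref{mainthm1} together with the multiplicative structure of the centre of $\mathbb{Z}[S_n]$. Write $\hat\chi(\rho)=\chi(\rho)/\chi(1)$ for the normalized value at a class of reduced cycle type $\rho$ (the cycle type with all parts $\ge 2$, the fixed points being suppressed), and let $\lVert\rho\rVert=\sum_i(\rho_i-1)$ be its absolute length, i.e. the minimal number of transpositions needed to express such an element; a $k$-cycle is exactly the reduced type $(k)$, with $\lVert(k)\rVert=k-1$. Fix, for each $k\le m$, a class $\gamma_k$ with $\lVert\gamma_k\rVert=k-1$ (so $\gamma_1$ is the identity). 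Since any target obtained by replacing each $k$-cycle of $\lambda$ by such a $\gamma_k$ again has a well-defined reduced cycle type, \autoref{mainthm1} already expresses its normalized value as a $\mathbb{Q}(x)$-polynomial in $n$ and the cycle values $\hat\chi((1)),\dots,\hat\chi((m))$. Thus it suffices to prove the following \emph{recovery lemma}: for every $k\le m$ the cycle value $\hat\chi((k))$ is a $\mathbb{Q}(x)$-polynomial in $n$ and the replacement values $\hat\chi(\gamma_1),\dots,\hat\chi(\gamma_k)$. Substituting these expressions into the polynomial supplied by \autoref{mainthm1} and using that a composition of $\mathbb{Q}(x)$-polynomials is again one — with the index-for-index replacement keeping the number of variables at most $m$ — yields the desired polynomial.

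I would prove the recovery lemma by induction on $k$, the base case $\hat\chi((1))=1$ being trivial. By \autoref{mainthm1}, $\hat\chi(\gamma_k)$ is already a $\mathbb{Q}(x)$-polynomial in $n$ and the cycle values; I claim the absolute-length grading forces it to have the shape
\begin{equation*}
\hat\chi(\gamma_k)=a(n)\,\hat\chi((k))+R\big(n;\hat\chi((1)),\dots,\hat\chi((k-1))\big),
\end{equation*}
with $a(n),R$ over $\mathbb{Q}(x)$ and no cycle value of index exceeding $k-1$ apart from the single linear term in $\hat\chi((k))$. This is because, passing to central characters $\omega_\chi(\rho)=|K_\rho|\,\hat\chi(\rho)$ (with class size $|K_\rho|=(n)_{\downarrow|\rho|}/z_\rho\in\mathbb{Q}[x]$), the centre of $\mathbb{Z}[S_n]$ carries the Farahat--Higman filtration by absolute length, under which the cycle class sums are the generators, the structure constants are polynomial in $n$, and a monomial $\prod_i\omega_\chi((j_i))$ sits in absolute length $\sum_i(j_i-1)$. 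As $\omega_\chi(\gamma_k)$ has absolute length $k-1$, and the only generator of that absolute length is $\omega_\chi((k))$, the latter can appear only to first order, while every other top contribution is a product of cycle central characters of index $<k$. By the inductive hypothesis $R$ is then a $\mathbb{Q}(x)$-polynomial in $n,\hat\chi(\gamma_1),\dots,\hat\chi(\gamma_{k-1})$, and solving the display for $\hat\chi((k))$ finishes the step — provided $a(n)\ne0$.

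The main obstacle is precisely this nonvanishing $a(n)\ne0$. In the associated graded of the Farahat--Higman filtration — a polynomial ring with a single indecomposable generator at each level, namely the class of the corresponding single cycle — the coefficient $a(n)$ is, up to the nonzero rational factor coming from class sizes, the component of $\gamma_k$ along the generator $(k)$ at level $k-1$. This component is an alternating combination of top connection coefficients, so cancellation is a priori conceivable and must be excluded: already at level $3$ one computes for $\gamma=(2,2,2)$ the value $\tfrac{10}{3}=(9\cdot4-1\cdot16)/6$ as a genuine difference of factorization counts that happens not to vanish. I would establish $a(n)\ne0$ in general by showing that the image of any class of absolute length $d$ has nonzero primitive (generator) component in the Farahat--Higman graded ring; concretely one can filter additionally by the number of parts, isolate the unique term of smallest support — the single $(d{+}1)$-cycle — and argue that the corresponding connection coefficient survives, or, failing a purely combinatorial argument, exhibit for infinitely many $n$ an irreducible character whose value at $\gamma_k$ genuinely varies with $\hat\chi((k))$, forcing $a(n)$ to be a nonzero element of $\mathbb{Q}(x)$. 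This nonvanishing is where I expect the real work to lie; once it is in hand, the induction and the substitution into \autoref{mainthm1} complete the proof.
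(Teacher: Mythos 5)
Your outline genuinely parallels the paper's own architecture---the paper proves the cycle case as \autoref{char_from_cycles} and derives the replacement claim in \autoref{any_set} by exactly the triangular inversion you describe---but there is a real gap precisely where you locate ``the real work'': the nonvanishing $a(n)\neq 0$ is never proved, and the routes you sketch toward it cannot work. The associated graded ring of the Farahat--Higman filtration cannot see $a(n)$ at all: at top filtration degree, a class $\gamma_k$ with two or more nontrivial parts is exactly a nonzero multiple of the monomial in the cycle generators given by its parts, so its component along the generator $(k)$ in the graded ring is \emph{zero}---not merely subject to conceivable cancellation. The coefficient $a(n)$ lives strictly below top order: by \autoref{char_from_cycles}(ii) every coefficient of $T_\lambda$ has numerator degree at most denominator degree, and the true value, computed in \autoref{leading_coeff}, is $(-1)^{1+\sum_i a_i}(\supp(\lambda)-1)!\prod_i\lambda_i^{a_i}\big/\bigl((k+1)!\prod_{i=k+1}^{\supp(\lambda)-1}(N-i)\bigr)$, a rational function tending to $0$ as $N\to\infty$ (for $\gamma=(2^2)$ it is $\pm 4/(N-3)$). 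So any argument confined to the graded ring, or to ``isolating the unique term of smallest support,'' interrogates the wrong stratum; one must actually evaluate the alternating sum of sub-leading connection coefficients. The paper does this by an induction that (i) characterizes which classes $\mu$ with $\Vert\mu\Vert=k$ arise as $xy$ (each cycle of $y$ meets $\supp(x)$ in at most one point), (ii) counts $|\{x\in(\lambda_r)\mid xy\in\mu\}|$ in closed form, and (iii) closes with a nontrivial binomial identity whose total is $(-1)^{1+\sum_i a_i}(\supp(\lambda)-1)!$, manifestly nonzero. Nothing in your proposal substitutes for steps (ii)--(iii), and your own example for $(2,2,2)$ shows you are aware the sum is a genuine difference of factorization counts.

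Two further defects are worth flagging. Your fallback---exhibiting, for infinitely many $n$, characters whose value at $\gamma_k$ varies with $\hat\chi((k))$---would at best show that $a$ is a nonzero element of $\mathbb{Q}(x)$; but the theorem requires solving for $\hat\chi((k))$ at \emph{every} admissible $n$, and a nonzero rational function can still vanish at particular integers $n$, where your recovery would break. Only the closed-form coefficient of \autoref{leading_coeff}, whose numerator is a nonzero constant, rules this out for all $n$ in range. Separately, the opening move ``deduce the statement from \autoref{mainthm1}'' is circular as written, since the first paragraph of the statement is itself part of what must be proved; this piece is repairable, because the Farahat--Higman facts you invoke (structure constants polynomial in $n$, triangularity of the monomial-to-class change of basis at top weight) do yield the cycle case by essentially the induction the paper runs from \autoref{general_poly_rel_thm}. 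That defect is minor; the missing proof of $a(n)\neq 0$ is the substance of the second half of the theorem.
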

Using these polynomial relations and Gr\"obner bases, we can find many sets of classes such that no irreducible character of $S_n$ can vanish at every class in the set. In particular, we obtain a partial answer to \autoref{ThrChar}, which can be roughly stated as the following theorem; the precise statement will be given in \autoref{Zn>3}.

\begin{thmintro}\label{mainthm2}
If $n\equiv 2$ or $11$ mod $12$, $n\equiv 1$ or $4$ mod $5$, $n$ is large enough, and certain integers that depend on $n$ are not squares, then $S_n$ does not have three characters as in \autoref{ThrChar}.
\end{thmintro}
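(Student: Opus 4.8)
The plan is to argue by contradiction: assume $S_n$ has three irreducible characters $\chi_1,\chi_2,\chi_3$ with $\chi_1\chi_2\chi_3(g)=0$ for all $g\neq 1$, and then squeeze out a Diophantine condition on $n$ that the hypotheses exclude. The first step is to pass from the full character table to a finite, $n$-independent system of equations. Fix a bound $m$ and let $\mathcal{C}$ be the set of conjugacy classes whose cycle type is a partition of some integer $\le m$, together with the ``composition of transpositions'' classes allowed in the second part of \autoref{mainthm1}. For an irreducible character $\chi$ write $a_k=\chi(c_k)/\chi(1)$ for its normalized value on a single $k$-cycle, $2\le k\le m$. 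By \autoref{mainthm1}, for every class $\lambda\in\mathcal{C}$ the normalized value $\chi(\lambda)/\chi(1)$ equals $T_\lambda(n;a_2,\dots,a_m)$, so the assertion ``$\chi$ vanishes on $\lambda$'' becomes the single polynomial equation $T_\lambda(n;a_2,\dots,a_m)=0$ in the coordinates $a_2,\dots,a_m$ with $n$ substituted for $x$. Thus each $\chi_i$ is recorded by a point $\mathbf a^{(i)}=(a_2^{(i)},\dots,a_m^{(i)})\in\mathbb{Q}^{m-1}$, and the hypothesis says that the zero loci of the polynomials $\{T_\lambda\}_{\lambda\in\mathcal{C}}$ passing through these three points cover all of $\mathcal{C}$.

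The second step is to turn this covering into finitely many polynomial systems. Since $\chi_1\chi_2\chi_3$ vanishes on every class of $\mathcal{C}$, for each $\lambda\in\mathcal{C}$ at least one index $i$ satisfies $T_\lambda(n;\mathbf a^{(i)})=0$; choosing such an $i$ for each $\lambda$ yields a map $\mathcal{C}\to\{1,2,3\}$, i.e.\ a $3$-coloring. For a fixed coloring, the classes assigned to $i$ give a polynomial system $\Sigma_i$ that $\mathbf a^{(i)}$ must solve. I would cut the number of colorings down using the available symmetries: tensoring by the sign character multiplies the normalized value at a $k$-cycle by $(-1)^{k-1}$ and permutes the data in a controlled way, and the linear characters never vanish, so if one of the $\chi_i$ is linear the other two must already cover $\mathcal{C}$, a stronger requirement dispatched by the same method; hence I may assume all three are nonlinear and each $\Sigma_i$ is nonempty. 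The congruence hypotheses $n\equiv 2,11\pmod{12}$ and $n\equiv 1,4\pmod 5$ enter here and at the next step: they guarantee that the specific small classes I use genuinely occur in $S_n$ and that the leading coefficients in $x$ of the relevant $T_\lambda$ do not vanish at $x=n$, so that each relation has its generic degree and the elimination below is uniform.

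The third step is elimination. For each coloring I would compute a Gr\"obner basis of $\Sigma_1\cup\Sigma_2\cup\Sigma_3$ over $\mathbb{Q}(x)$, treating $x$ as a parameter and then specializing $x=n$, to eliminate all the variables $a_k^{(i)}$ and extract the residual constraints in $n$. For all but finitely many colorings the eliminant is a nonzero constant, so the system is already inconsistent once $n$ is large, and ``$n$ large enough'' absorbs the finitely many degenerate specializations. The surviving colorings reduce, after elimination, to a single quadratic equation in one remaining coordinate whose discriminant is an explicit rational function of $n$; since the coordinates $a_k^{(i)}$ are ratios of integer character values and hence rational, this discriminant must be a rational square, equivalently, after clearing denominators, a perfect square integer. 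These are exactly the ``certain integers that depend on $n$'' in the statement, and the hypothesis that they are not squares rules out a rational solution, hence a genuine character realizing $\mathbf a^{(i)}$, which is the desired contradiction.

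The main obstacle is making the elimination uniform in $n$ and correctly isolating the integrality obstruction. Working over $\mathbb{Q}(x)$ means a Gr\"obner basis can pick up spurious factors in its leading coefficients whose vanishing at $x=n$ would invalidate the specialization; excluding these is precisely what the congruence and largeness hypotheses are for, and checking that every bad specialization is covered by the stated congruences is the delicate bookkeeping. Equally subtle is the passage from ``the system has no rational solution'' to ``no such character exists'': a priori $\Sigma_i$ could have complex or irrational points that do not arise from any partition, so the argument must exploit that genuine normalized values are rational, together with the square obstruction on the discriminant, rather than emptiness of the complex variety, to force the contradiction.
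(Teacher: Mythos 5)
Your skeleton---encode each character by its normalized values at cycles, turn the covering hypothesis into a $3$-coloring of a finite set of classes, run Gr\"obner elimination over $\mathbb{Q}(n)$ for each coloring, and extract arithmetic obstructions on $n$ from the rationality of character values---is indeed the same general strategy as the paper's proof of \autoref{Zn>3}. However, there are two genuine gaps. First, your claim that ``for all but finitely many colorings the eliminant is a nonzero constant'' and that the survivors reduce to ``a single quadratic equation'' whose discriminant must be a square is false, and this is where most of the actual work lies. The eliminants that come out of these systems are typically univariate polynomials in $t_{(2)}$ (or $t_{(3)}$) of degree $4$, $6$, or $8$ over $\mathbb{Q}(n)$; rewritten in the integer variable $\omega_\chi(\lambda)=|\lambda|\chi(\lambda)/\chi(1)$ they define affine plane curves in $(n,\omega)$ of nonzero genus, which a priori admit solutions for infinitely many $n$. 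The square-discriminant trick disposes only of the quadratic factors. To kill the higher-degree factors one needs the fact that $\omega_\chi(\lambda)$ is a \emph{rational integer} (not merely rational), so that $(n,\omega_\chi(\lambda))$ is an \emph{integral} point, and then Siegel's theorem on integral points of curves of nonzero genus to conclude that only finitely many $n$ survive. Rationality alone, which is all your proposal invokes, cannot do this: genus-one curves can have infinitely many rational points. Siegel's theorem is entirely absent from your argument, and without it the case analysis does not close.

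Second, you misidentify what the congruence hypotheses are for. They are not needed to guarantee that small classes exist in $S_n$ or that leading coefficients of the $T_\lambda$ are nonzero at $x=n$ (the coefficients in \autoref{char_from_cycles} and \autoref{leading_coeff} have all their poles below $\supp(\lambda)$, so largeness of $n$ alone handles that). Rather, in several colorings the elimination does \emph{not} yield a contradiction but instead pins down an exact rational value such as $\rho_\chi((3)) = (n^2-25n+60)/(2n(n-1)(n-2))$; integrality of $\omega_\chi((3))$, $\omega_\chi((2^2))$, or $\omega_\chi((5))$ then forces $n$ into specific residue classes modulo $3$, $4$, or $5$ (see \autoref{4_trans-allowed} and the final paragraphs of the proof of \autoref{Zn>3}). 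The hypotheses $n\equiv 2,11 \bmod{12}$ and $n\equiv 1,4\bmod 5$ are chosen precisely to contradict those forced congruences. Your proposal has no mechanism producing such exact values and congruence conditions, so as written it neither uses the mod-$12$ and mod-$5$ hypotheses correctly nor could it conclude in the cases where the complex (even rational) solution variety is nonempty.
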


As an another application of \autoref{mainthm1}, we can compute the values of characters of $S_n$ of $2$-defect zero, this time only using the number $n$. This is our third main result; a precise statement is given in \autoref{staircase_poly_existence}.
\begin{thmintro}\label{mainthm3}
Let $\lambda$ be a partition of a positive integer $m$, and let $\psi_k$ be the irreducible character of $2$-defect $0$ of $S_{k(k+1)/2}$ for each positive integer $k$. There exists a rational function $R_\lambda(x)\in\mathbb{Q}(x)$ such that $R_\lambda(k(k+1)/2) = \psi_k(\lambda)/\psi_k(1)$. Also, there is a recursive algorithm computing the rational function $R_\lambda(x)$.
\end{thmintro}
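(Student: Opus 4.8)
The plan is to reduce the whole statement to the case of single cycles via \autoref{mainthm1}, and then to evaluate the single-cycle values of $\psi_k$ using the self-conjugacy of the staircase together with the description of normalized characters through contents. Write $n=k(k+1)/2$ and let $\delta_k=(k,k-1,\dots,2,1)$ be the staircase partition, the unique $2$-core of size $n$; thus $\psi_k=\chi^{\delta_k}$. By \autoref{mainthm1} applied to $\chi=\psi_k$ there is a polynomial $T_\lambda$ over $\mathbb{Q}(x)$ with
\[
\frac{\psi_k(\lambda)}{\psi_k(1)}=T_\lambda\!\left(n;\ \frac{\psi_k(c_1)}{\psi_k(1)},\dots,\frac{\psi_k(c_m)}{\psi_k(1)}\right),
\]
where $c_j$ denotes the class of a single $j$-cycle $(j,1^{n-j})$. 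Hence it suffices to produce, for each fixed $j\le m$, a rational function $R_j(x)\in\mathbb{Q}(x)$ with $R_j(n)=\psi_k(c_j)/\psi_k(1)$; then $R_\lambda:=T_\lambda\bigl(x;R_1(x),\dots,R_m(x)\bigr)\in\mathbb{Q}(x)$ does the job, and its computation is effective once the $R_j$ and the algorithm behind \autoref{mainthm1} are.

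For the single cycles I would invoke the classical fact (going back to Jucys, and developed by Kerov--Olshanski and Corteel--Goupil--Schaeffer) that the normalized value at one $j$-cycle is a universal polynomial in the size and in the power sums of contents: there is a polynomial $Q_j$, independent of $\lambda$ and $n$, with
\[
\frac{n!}{(n-j)!}\,\frac{\chi^{\lambda}(j,1^{n-j})}{\chi^{\lambda}(1)}=Q_j\!\bigl(n;\,p_1(\lambda),p_2(\lambda),\dots\bigr),\qquad p_r(\lambda)=\sum_{c\in\lambda}\operatorname{cont}(c)^r .
\]
Two features of the staircase now intervene. Since $\delta_k$ is self-conjugate its content multiset is symmetric about $0$, so $p_r(\delta_k)=0$ for every odd $r$; this already recovers the vanishing of $\psi_k$ on every class with an even part, i.e.\ its $2$-defect-zero behaviour. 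It therefore remains to understand the even power sums $p_{2s}(\delta_k)$.

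The key step, and the one I expect to be the main obstacle, is to show that each even content power sum is a \emph{polynomial} in the triangular number $n$, i.e.\ $p_{2s}(\delta_k)=\Phi_{2s}(n)$ for some $\Phi_{2s}\in\mathbb{Q}[x]$; this is exactly what forces the prescribed values to come from a single rational function of $n$. I would prove it by peeling off outer antidiagonals: $\delta_k=\bigsqcup_{j=1}^{k}(\delta_j\setminus\delta_{j-1})$, and the cells of $\delta_j\setminus\delta_{j-1}$ have contents $\{\,j-1,j-3,\dots,-(j-1)\,\}$, so that
\[
p_{2s}(\delta_k)=\sum_{j=1}^{k} g_{2s}(j),\qquad g_{2s}(j)=\sum_{i=0}^{j-1}(j-1-2i)^{2s}.
\]
A Faulhaber/Bernoulli computation (writing $g_{2s}$ through $B_{2s+1}$ and using $B_m(1-t)=(-1)^mB_m(t)$) shows that $g_{2s}$ is an \emph{odd} polynomial in $j$; consequently the antidifference $F(k)=\sum_{j=1}^{k}g_{2s}(j)$ obeys the polynomial identity $F(k)-F(-1-k)=\sum_{j=-k}^{k}g_{2s}(j)=0$, i.e.\ $F(k)=F(-1-k)$. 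A polynomial invariant under $k\mapsto -1-k$ is an even polynomial in $k+\tfrac12$, hence lies in $\mathbb{Q}[k(k+1)]=\mathbb{Q}[2n]$, i.e.\ is a polynomial in $n$; this yields $\Phi_{2s}$ and makes the passage from $k$ to $n$ explicit. As a sanity check, the recipe gives $g_2(j)=(j^3-j)/3$ and hence $p_2(\delta_k)=n(n-1)/3$.

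Feeding $p_{2s}(\delta_k)=\Phi_{2s}(n)$ and the vanishing of the odd power sums into $Q_j$, and dividing by the polynomial $n!/(n-j)!=n(n-1)\cdots(n-j+1)$, yields $R_j(x)\in\mathbb{Q}(x)$, which completes the reduction and gives $R_\lambda$ with $R_\lambda\bigl(k(k+1)/2\bigr)=\psi_k(\lambda)/\psi_k(1)$ (for all $k$ with $k(k+1)/2\ge m$, after which a rational function matching infinitely many values is unique). The recursive algorithm is finally assembled from three effective ingredients: the recursion producing $T_\lambda$ in \autoref{mainthm1}, the standard recursion computing the content polynomials $Q_j$, and the closed form for $\Phi_{2s}$ obtained above from $g_{2s}$ and the symmetry $k\mapsto-1-k$.
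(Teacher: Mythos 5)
Your proof is correct, but it takes a genuinely different route from the paper's. The paper (in the precise version, \autoref{staircase_poly_existence}) never passes through cycle values: it runs an induction on partitions in the ordering of \autoref{part_ordering}, using the $2$-defect-zero vanishing of $\psi_k$ on all classes of even order to turn \autoref{general_poly_rel_thm}, applied to $F_{(2),\overline{\lambda}}$, into the recurrence \eqref{staircase_P_recurrence}; that recurrence directly defines a polynomial $P_\lambda$ with $Q_{\supp(\lambda)}(n)\rho_k(\lambda)=P_\lambda(n)$, staying entirely inside the paper's own $F_{x,y}$-machinery, and as a bonus it yields the degree of $P_\lambda$, its leading coefficient as a product of Catalan numbers, and its forced zeros --- data exploited in the subsequent corollaries. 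You instead reduce to single cycles via \autoref{char_from_cycles} (much as the paper's \autoref{def0} does for the bare existence statement), and then import the content-evaluation theorem of Jucys/Kerov--Olshanski/Corteel--Goupil--Schaeffer; the genuinely new ingredient on your side is the lemma that the even content power sums of the staircase are polynomials in $n=k(k+1)/2$, proved by splitting $\delta_k$ into antidiagonal strips, expressing $g_{2s}$ through $B_{2s+1}$ to see it is an odd polynomial, and using invariance of the antidifference under $k\mapsto -1-k$. That lemma and its proof sketch are correct (your sanity check $p_2(\delta_k)=n(n-1)/3$ is consistent with $\rho_k((3))=-1/(2(n-2))$ from \autoref{staircase_cycle_thm}), and composing $R_\lambda=T_\lambda\bigl(x;R_1(x),\dots,R_m(x)\bigr)$, with uniqueness of a rational function matching infinitely many values, finishes the argument. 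Your route is more conceptual and gives closed forms for the $\Phi_{2s}$, at the price of relying on external machinery; the paper's route is self-contained and produces the finer structural constants that its later results use.

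One small inaccuracy, though it is not load-bearing: the assertion that vanishing of the odd content power sums ``already recovers the vanishing of $\psi_k$ on every class with an even part'' is too strong. The symmetry $p_r\mapsto(-1)^r p_r$ under conjugation, combined with $\chi^{\lambda'}(g)=\operatorname{sgn}(g)\,\chi^{\lambda}(g)$, shows that vanishing of the odd power sums forces vanishing only on \emph{odd permutations} --- in particular on even cycles, which is all your argument needs --- but not on even permutations having even parts: the self-conjugate character $\chi^{(2,2)}$ of $S_4$ has all odd content power sums zero yet takes the value $2$ on the class $(2^2)$. The full $2$-defect-zero vanishing of $\psi_k$ also requires the specific values of the even power sums, so you should either weaken that aside or drop it.
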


In Section 2, we fix some notations, and prove \autoref{mainthm1}. In Section 3, we use the polynomial relations obtained from \autoref{mainthm1} to show various examples of the following form: if an irreducible character $\chi$ of $S_n$ vanishes at certain classes, then it is forced to be zero or to be nonzero at certain other classes, or $n$ is forced to satisfy certain conditions. Using the techniques developed in these examples, we prove \autoref{mainthm2} in Section 4. In section 5, we study the characters of $2$-defect $0$ and $3$-defect $0$, and prove \autoref{mainthm3}. Section 6 shows some examples which suggest that \autoref{mainthm1} might be sharp, in the sense that there might be no smaller set of conjugacy classes with the same properties.

Throughout this paper, we will mostly follow notations from the book \cite{Isaacs}. In particular, we will use the following notations. The set of all complex irreducible characters of a finite group $G$ will be denoted by $\Irr(G)$. The conjugacy class of $G$ containing $x\in G$ will be denoted by $x^G$. 

\section{Polynomial relations between values of irreducible characters}

Let us recall the following simple, well-known lemma. We included a proof for completeness.
\begin{lem}{\upshape\cite[Exercise 3.12]{Isaacs}}\label{general_poly_rel_thm}
Let $G$ be a finite group. Then for any $x,y\in G$ and any $\chi \in \Irr(G)$, \begin{equation}\label{general_poly_rel}|x^G|\chi(x)\chi(y^{-1}) = \chi(1)\sum_{x'\in x^G}\chi(x'y^{-1}).\end{equation}
\end{lem}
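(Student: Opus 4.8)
The plan is to exploit the fact that the class sum of $x$ acts as a scalar on the irreducible representation affording $\chi$, and then to compute a single trace in two different ways. Concretely, I would let $\rho$ be a matrix representation affording $\chi$, so that each $\rho(g)$ is a $\chi(1)\times\chi(1)$ matrix with $\Trace\rho(g)=\chi(g)$, and form the class sum $\widehat{C}=\sum_{x'\in x^G}x'$ inside the group algebra $\mathbb{C}[G]$. The central observation is that $\widehat{C}$ lies in the center of $\mathbb{C}[G]$, since conjugation by any $g$ merely permutes the elements of the conjugacy class $x^G$ among themselves; consequently $\rho(\widehat{C})$ commutes with every $\rho(g)$.

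By Schur's lemma, since $\rho$ is irreducible, $\rho(\widehat{C})=\omega\, I$ for some scalar $\omega\in\mathbb{C}$. First I would pin down this scalar by taking the trace: on one hand $\Trace\rho(\widehat{C})=\sum_{x'\in x^G}\chi(x')=|x^G|\chi(x)$, using that every $x'$ is conjugate to $x$ and hence $\chi(x')=\chi(x)$; on the other hand $\Trace(\omega I)=\omega\,\chi(1)$. Equating the two gives $\omega=|x^G|\chi(x)/\chi(1)$, the familiar central character value.

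The final step is to evaluate $\Trace\bigl(\rho(\widehat{C})\,\rho(y^{-1})\bigr)$ in two ways. Substituting $\rho(\widehat{C})=\omega I$ yields $\omega\,\Trace\rho(y^{-1})=\omega\,\chi(y^{-1})=\bigl(|x^G|\chi(x)/\chi(1)\bigr)\chi(y^{-1})$. Alternatively, expanding $\rho(\widehat{C})=\sum_{x'}\rho(x')$ and using $\rho(x')\rho(y^{-1})=\rho(x'y^{-1})$ together with linearity of the trace yields $\sum_{x'\in x^G}\chi(x'y^{-1})$. Setting the two expressions equal and multiplying through by $\chi(1)$ produces exactly \eqref{general_poly_rel}.

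I do not anticipate a genuine obstacle, as the argument is entirely formal; the only point requiring care is the verification that $\widehat{C}$ is central, so that Schur's lemma applies. This is precisely the place where summing over the \emph{entire} conjugacy class, rather than an arbitrary subset of it, is essential.
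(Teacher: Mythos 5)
Your proof is correct: the central observation (the class sum $\widehat{C}$ is central, hence acts as the scalar $\omega=|x^G|\chi(x)/\chi(1)$ on an irreducible representation), the determination of $\omega$ by taking traces, and the two-way evaluation of $\Trace\bigl(\rho(\widehat{C})\rho(y^{-1})\bigr)$ all go through without difficulty and yield exactly \eqref{general_poly_rel}. The route differs from the paper's, though. The paper never invokes Schur's lemma or a matrix representation: it works entirely inside $\mathbb{C}G$, writing down the primitive central idempotent $e_\chi=\frac{\chi(1)}{|G|}\sum_g\chi(g^{-1})g$, noting that $e_\chi\widehat{K}$ must be a scalar multiple of $e_\chi$ because the $e_\chi$ form a basis of $\mathbf{Z}(\mathbb{C}G)$ with $e_\chi e_\varphi=\delta_{\chi,\varphi}e_\chi$, computing that scalar as $|K|\chi(x)/\chi(1)$ by applying $\chi$, and then extracting \eqref{general_poly_rel} by comparing the coefficient of the group element $y$ in the two expansions of $e_\chi\widehat{K}$. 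Your trace pairing against $\rho(y^{-1})$ plays exactly the role of the paper's coefficient extraction at $y$, and your Schur-lemma scalar is the paper's idempotent scalar, so the two proofs are parallel in structure; the difference is which piece of standard machinery does the work. Your version is more elementary and self-contained, needing only irreducibility and Schur's lemma, whereas the paper's idempotent calculation keeps everything inside the center of the group algebra, which is the natural habitat for the class-sum structure constants (the ``connection coefficients'') that the rest of the paper manipulates. Either proof is a complete and acceptable solution.
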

\begin{proof}
Let $\mathbb{C}G$ be the complex group algebra of $G$. For each $\chi\in\Irr(G)$, let $e_\chi$ be the corresponding \emph{primitive central idempotent} in $\mathbb{C}G$: $$e_\chi = \frac{\chi(1)}{|G|}\sum_{g\in G}\chi(g^{-1})g.$$ Let $x\in G$ be any element, $K=x^G$ be the conjugacy class of $G$ containing $x$, and $\hat{K}$ be its class sum. The coefficient of another class sum $\widehat{L}$ in the product $e_\chi\widehat{K}$ can be computed in two ways. First, one can write $$e_\chi\widehat{K} = \frac{\chi(1)}{|G|} \left(\sum_{g\in G}\chi(g^{-1})g\right)\widehat{K} = \frac{\chi(1)}{|G|}\sum_{g\in G}\sum_{x'\in K}\chi(g^{-1})gx' $$ so that the coefficient of $\widehat{L}$, which equals the coefficient of $y$, is 
\begin{equation}
\frac{\chi(1)}{|G|}\sum_{x'\in K}\chi(x'y^{-1}).
\end{equation} On the other hand, note that $e_\chi\widehat{K}$ is a scalar multiple of $e_\chi$, since the primitive central idempotents form a basis of $\mathbf{Z}(\mathbb{C}G)$, and $e_\chi e_\varphi = \delta_{\chi,\varphi}$ for any $\varphi\in \Irr(G)$. Since $$\chi(e_\chi \widehat{K}) = |K|\chi(x)\text{ and }\chi(e_\chi)=\chi(1),$$ it follows that $$e_\chi\widehat{K} = \frac{|K|\chi(x)}{\chi(1)}e_\chi = \frac{|K|\chi(x)}{|G|}\sum_{g\in G}\chi(g^{-1})g .$$ Comparing these two expressions gives \eqref{general_poly_rel}
\end{proof}

\begin{dfn}
For two elements $x,y$ of a finite group $G$, we define the polynomial \begin{equation}\label{general_poly_def}F_{x,y} = F_{x^G,y^G} := |x^G|t_{x^G}t_{(y^{-1})^G} - t_{\{1_G\}}\sum_{x'\in x^G}t_{(x'y^{-1})^G}\end{equation} in the polynomial ring $\mathbb{C}[t_{g^G} \mid g^G \text{ conjugacy class of }G]$. We will often write the variables as $t_g$ instead of $t_{g^G}$.
\end{dfn}

By \eqref{general_poly_rel}, every irreducible character $\chi$ of a finite group $G$ gives a solution of the system of equations $\{F_{x,y}=0 \mid x,y\in G\}$ by setting $t_x = \chi(x)$ for each $x\in G$. Since $F_{x,y}$ is homogeneous of degree $2$, every scalar multiple of $\chi$ also satisfies all $F_{x,y}$'s. We define \begin{equation}\rho_\chi(g) = \frac{\chi(g)}{\chi(1)}.\end{equation} So $\rho$ also satisfies the equations $F_{x,y}=0$. We will often write $\rho$ instead of $\rho_\chi$ if $\chi$ is obvious from the context.

Let $G=S_n$ be a finite symmetric group on the set $[n] := \{1,2,\dots, n\}$. Recall that the irreducible characters of $S_n$ only takes rational integers as its values, and every element of $S_n$ is conjugate to its own inverse. Hence, \eqref{general_poly_def} can be simplified to: \begin{equation}\label{symm_poly_rel}
F_{x,y} = |x^G|t_{x} t_{y} - t_{\{1\}}\sum_{x'\in x^G}t_{x'y}.
\end{equation}

Recall that the conjugacy classes of $S_n$ are determined by the cycle types of permutations, which then corresponds to partitions of $n$. We will use the notation $\lambda = (\lambda_1^{a_1},\lambda_2^{a_2}, \dots,\lambda_r^{a_r})$ to denote the partition with exactly $a_i$ parts of size $\lambda_i$ for each $i$. Given a partition $\lambda$ of $n$, we will abuse the notation and also denote by $\lambda$ the partitions of numbers $m\geq n$ obtained by adding parts of size $1$ to $\lambda$. We will often omit the parts of size $1$, so for example if we say $\lambda=(2)$, then it will mean $(2, 1^{n-2})$ for any $n$. We again abuse the notation and denote the class by the corresponding partition. We will also set $\chi(\lambda):=\chi(\sigma)$ for any element $\sigma$ of the class $\lambda$.

For a permutation $\sigma$ of cycle type $\lambda = (\lambda_1^{a_1},\dots,\lambda_r^{a_r})$, we define \begin{equation}\supp(\sigma)=\supp(\lambda) = \sum_{i=1}^{r}a_i\lambda_i\text{ and }\Vert \sigma\Vert = \Vert \lambda\Vert :=\sum_{i=1}^{r}a_i(\lambda_i-1)\label{def_transposition_number}.\end{equation} Note that $\Vert\sigma\Vert$ equals the minimum number of transpositions required to get $\sigma$ by composition; for example, $\Vert(3,2)\Vert= 3$, because $(3,2)$ can be obtained by composing $3$ transpositions.

We will use the ordering of partitions of a fixed number $n$ defined by the following rule. 
\begin{dfn}\label{part_ordering}
Let $\lambda=(\lambda_1,\lambda_2,\dots, \lambda_r)$ and $\sigma=(\sigma_1,\sigma_2,\dots,\sigma_s)$ be partitions where the parts are in decreasing order: $\lambda_1\geq \lambda_2\geq \cdots \geq \lambda_r$ and $\sigma_1\geq \sigma_2\geq \cdots \geq \sigma_s$. We define $\lambda>\sigma$ if $\Vert\lambda\Vert>\Vert\sigma\Vert$ (where $\Vert\lambda\Vert$ is as defined in \eqref{def_transposition_number}), or $\Vert\lambda\Vert=\Vert\sigma\Vert$ and there exists an $i$ such that $\lambda_j=\sigma_j$ for all $j<i$ and $\lambda_i<\sigma_i$. 
\end{dfn}
In other words, we first order by $\Vert \lambda\Vert$, then for those with the same value of $\Vert\lambda\Vert$, we use the ``reverse dominance ordering''. For example, $(2^3)<(5)<(4,2)<(3^2)<(3,2^2)$. This is a total ordering of the set of all partitions of $n$. This also defines an ordering of corresponding conjugacy classes of $S_n$, and of the variables $t_{K}$ appearing in \eqref{symm_poly_rel}. 

Our first result, which is the precise statement of \autoref{mainthm1}, shows that the value of any irreducible character of $S_n$ at any class can be expressed as a polynomial in its values at certain ``smaller'' classes, namely the cycles, where the coefficients are rational functions in $\mathbb{Q}(N)$. It also provides a recursive algorithm to compute these polynomials.
\begin{thm}\label{char_from_cycles}
Each irreducible character $\chi\in\Irr(S_n)$ can be computed from the positive integer $n$ and the ratios $\rho((r)) = \chi((r))/\chi(1)$, $2\leq r\leq n$. More precisely, for any partition $\lambda$ of $m\in \mathbb{Z}_{>0}$, there exists a polynomial $T_\lambda(t_{(2)},t_{(3)}, \dots, t_{(\Vert \lambda\Vert + 1)})$ over the field of rational functions $\mathbb{Q}(N)$, such that 
\begin{enumerate}[label={\upshape(\roman*)}]
\item for any  $n\geq m$ and $\chi\in \Irr(S_n)$, $\rho(\lambda)= T_\lambda(\rho((2)),\dots,\rho((\Vert \lambda\Vert+1)))(n)$ (so we plug the number $n$ into the variable $N$), and
\item if $\frac{p(N)}{q(N)}\prod_{i=1}^{\Vert \lambda\Vert}t_{(i+1)}^{b_i}$ appears as a term in $T_\lambda$, where $p(N),q(N)\in \mathbb{Q}[N]$ are relatively prime, then $\deg p\leq \deg q$, $\sum_{i=1}^{\Vert \lambda\Vert}b_ii \leq \Vert\lambda\Vert$, $\sum_{i=1}^{\Vert \lambda\Vert}b_ii \equiv \Vert\lambda\Vert$ mod $2$, and $\sum_{i=1}^{\Vert\lambda\Vert}b_i(i+1)\leq \supp(\lambda)$.
\end{enumerate}
\end{thm}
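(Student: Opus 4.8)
The plan is to induct on the total ordering of partitions introduced in \autoref{part_ordering}, using the relations $F_{x,y}=0$ from \eqref{symm_poly_rel} applied to $\rho$ (so that $t_{\{1\}}=\rho(1)=1$). The base cases are the identity, where $T_{(1^n)}=1$, and a single cycle $\lambda=(k)$, where $T_{(k)}=t_{(k)}$; both visibly satisfy (ii), and the recursion below degenerates on them. For the inductive step, suppose $\lambda=(\lambda_1,\dots,\lambda_r)$ has $r\geq 2$ nontrivial parts. I would set $k=\lambda_1$ equal to the \emph{largest} part, put $\nu=(\lambda_2,\dots,\lambda_r)$, fix an element $y$ of cycle type $\nu$, and take $x$ to be a $k$-cycle, so that $\lambda=\nu\cup(k)$. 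Grouping the products $x'y$ by their cycle type, the relation $F_{x,y}=0$ becomes
\[ |x^{S_n}|\,\rho((k))\,\rho(\nu)\;=\;\sum_{x'\in x^{S_n}}\rho(x'y)\;=\;\sum_{\mu}N_\mu(n)\,\rho(\mu),\qquad N_\mu(n)=\#\{x'\in x^{S_n}:\ x'y\text{ has type }\mu\}. \]

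The crux is to show that on the right-hand side the class $\lambda=\nu\cup(k)$ is the strict maximum in the ordering of \autoref{part_ordering}. Subadditivity of the reflection length gives $\Vert x'y\Vert\le\Vert x'\Vert+\Vert y\Vert=\Vert\lambda\Vert$, and since $\operatorname{sgn}(\sigma)=(-1)^{\Vert\sigma\Vert}$ is multiplicative, every $\mu$ with $N_\mu\neq 0$ has $\Vert\mu\Vert\equiv\Vert\lambda\Vert\pmod 2$; in particular those with $\Vert\mu\Vert<\Vert\lambda\Vert$ are already $<\lambda$. For the maximal-length terms $\Vert\mu\Vert=\Vert\lambda\Vert$ I would invoke the structure of the absolute order: $\Vert x'y\Vert=\Vert x'\Vert+\Vert y\Vert$ forces $y$ to lie below $x'y$, so the cycles of $x'y$ are obtained from those of $y$ by the merging prescribed by the single $k$-cycle $x'$. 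A short computation with this merging shows that if $\supp(x')\cap\supp(y)\neq\varnothing$ then $x'y$ has a cycle of length strictly larger than $k=\lambda_1$ (here the choice $k=\lambda_1$ is essential), whence $\mu_1>\lambda_1$ and $\mu<\lambda$; and if $\supp(x')\cap\supp(y)=\varnothing$ then $x'y$ has type exactly $\lambda$. Hence the coefficient of $\rho(\lambda)$ is $N_\lambda(n)=\binom{n-\supp(\nu)}{k}(k-1)!$, a polynomial in $n$ of degree exactly $k$, and all other contributing $\mu$ satisfy $\mu<\lambda$. Since $|x^{S_n}|=\binom{n}{k}(k-1)!$ also has degree $k$ and each $N_\mu(n)$ is polynomial of degree $\le k$, I can solve
\[ \rho(\lambda)=\frac{|x^{S_n}|}{N_\lambda(n)}\,\rho((k))\,\rho(\nu)-\sum_{\mu<\lambda}\frac{N_\mu(n)}{N_\lambda(n)}\,\rho(\mu); \]
as these coefficients, read as rational functions of $N$, do not depend on $n$ and the identity holds for all $n\ge\supp(\lambda)$, it is an identity in $\mathbb{Q}(N)$.

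By the inductive hypothesis $\rho(\nu)=T_\nu$ and each $\rho(\mu)=T_\mu$ ($\mu<\lambda$) evaluated at cycles, while $\rho((k))=t_{(k)}$; substituting and collecting terms defines $T_\lambda$. To verify (ii), note that $|x^{S_n}|/N_\lambda$ has degree $0$ and each $N_\mu/N_\lambda$ degree $\le 0$, and that rational functions with $\deg p\le\deg q$ are closed under sums and products, so all coefficients of $T_\lambda$ are proper. For the monomial conditions, assign to $\prod_i t_{(i+1)}^{b_i}$ the weight $\sum_i b_i i$ and support $\sum_i b_i(i+1)$: the factor $t_{(k)}$ contributes weight $k-1$ and support $k$, and by induction every monomial of $T_\nu$ has weight $\le\Vert\nu\Vert$ of parity $\Vert\nu\Vert$ and support $\le\supp(\nu)$, so the monomials of $t_{(k)}T_\nu$ have weight $\le(k-1)+\Vert\nu\Vert=\Vert\lambda\Vert$ of parity $\Vert\lambda\Vert$ and support $\le k+\supp(\nu)=\supp(\lambda)$. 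The $T_\mu$ contributions give the same bounds from $\Vert\mu\Vert\le\Vert\lambda\Vert$, $\Vert\mu\Vert\equiv\Vert\lambda\Vert\pmod 2$, and $\supp(\mu)\le\supp(\lambda)$. This yields (ii) and confirms that only $t_{(2)},\dots,t_{(\Vert\lambda\Vert+1)}$ occur.

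The main obstacle will be the cycle-merging lemma of the second paragraph: establishing rigorously that among maximal-length products $x'y$ the disjoint union $\nu\cup(k)$ is the unique reverse-dominance-maximal type and that any overlap strictly increases the largest cycle length. This rests on the description of the absolute (reflection) order on $S_n$ and is precisely where splitting off the largest part is needed, so that a merged cycle necessarily exceeds every part of $\nu$. A secondary point to pin down is the polynomiality in $n$ of the structure constants $N_\mu(n)$, together with the bound $\deg N_\mu\le k$; this follows from counting the finitely many overlap patterns, each contributing a falling factorial in $n$, but should be stated carefully to justify passing from the numerical identities (valid for $n\ge\supp(\lambda)$) to an identity over $\mathbb{Q}(N)$.
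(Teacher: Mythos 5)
Your proposal is correct and takes essentially the same route as the paper's proof: induction along the ordering of \autoref{part_ordering}, applying \autoref{general_poly_rel_thm} with $x$ a single cycle and $y$ the remainder of $\lambda$, showing via the cycle-merging analysis that $\lambda$ is the unique maximal class appearing (so one can solve for $\rho(\lambda)$ with proper rational-function coefficients and conclude by induction). The only deviation is that you split off the largest part $\lambda_1$ where the paper splits off the smallest part $\lambda_r$ — and the paper supplies your deferred ``merging lemma'' explicitly inside the proof of \autoref{leading_coeff} — but either choice makes every merged type strictly smaller in the reverse dominance order, so the difference is immaterial.
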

For convenience, we also define the polynomial $T$ for cycles as $T_{(k)} := t_{(k)}$.
\begin{proof}
We use induction on $\lambda$, using the ordering defined in \autoref{part_ordering}. There is nothing to prove if $\lambda=(\Vert \lambda\Vert+1)$. Suppose that $\lambda=(\lambda_1^{a_1}, \dots, \lambda_r^{a_r})$ with either $a_1>1$ or $r>1$, and assume that the statement holds for all classes smaller than $\lambda$ with respect to our ordering. Then $t_K$ is the largest variable appearing in the polynomial $F_{(\lambda_r), (\lambda_1^{a_1},\dots, \lambda_r^{a_r-1})}$. By \autoref{general_poly_rel_thm} applied to $F_{(\lambda_r),(\lambda_1^{a_1},\dots,\lambda_r^{a_r-1})}$, every $\chi\in \Irr(S_n)$ must satisfy
\begin{align*}
&\frac{\prod_{i=1}^{\lambda_r}(n-|\supp(\lambda)|+i)}{\lambda_r}\rho(\lambda) \\=&\frac{\prod_{i=0}^{\lambda_r-1}(n-i)}{\lambda_r} \rho((\lambda_r))\rho((\lambda_1^{a_1},\dots, \lambda_r^{a_r-1})) - \sum_{\substack{x\in (\lambda_r)\\xy\notin \lambda}}\rho(xy) \\=& \frac{\prod_{i=0}^{\lambda_r-1}(n-i)}{\lambda_r} \rho((\lambda_r))\rho((\lambda_1^{a_1},\dots, \lambda_r^{a_r-1})) - \sum_{\mu\neq \lambda}|\{x\in (\lambda_r)\mid xy\in \mu\}|\rho(\mu)
\end{align*}
where $\rho= \chi/\chi(1)$, and $y$ is a fixed element in the class $(\lambda_1^{a_1},\dots, \lambda_r^{a_{r-1}})$. In the last sum, $|\{x\in (\lambda_r)\mid xy\in \mu\}|$ is given by a polynomial in $n$ of degree at most $|\supp(\mu)|-|\supp(y)|= |\supp(\mu)|-|\supp(\lambda)|+\lambda_r<\lambda_r$, which is the $n$-degree of the left-hand side. The statement now follows by induction.
\end{proof}

\begin{rmk}
The main difficulty in the computation of the polynomials $F_{x,y}$ and $T_\lambda$ lies in counting the number of $x'\in x^{S_n}$ such that $x'y$ has cycle type $\mu$ for each partition $\mu$, which is often called the ``connection coefficients''. This can quickly become very complicated as the supports of the permutations $x$ and $y$ grow. When $x$ and $y$ are small enough, one method to compute this number is the following. First, one can compute it in $S_n$ for small, explicit $n$'s, namely $|\supp(y)|\leq n \leq |\supp(\mu)|$. One may use the following well-known formula (cf. \cite[Exercise 3.9]{Isaacs}) $$|\{x'\in x^{S_n}\mid x'y\in z^G\} |= \frac{|x^G||z^G|}{n!}\sum_{\chi\in\Irr(S_n)}\frac{\chi(x)\chi(y)\chi(z)}{\chi(1)}.$$ Then one can use Lagrange interpolation to find a general expression of this number as a polynomial in $n$. It would be nice if one can find a faster method to compute these numbers. Although there are some methods developed for several special cases, we are not aware of such method which is particularly good at giving the general polynomial formula.
\end{rmk}
\begin{rmk}\label{Remark2}
An anonymous person pointed out that it is also possible to use an explicit formula for character values by Lassalle \cite[Theorems 4 and 6]{Lassalle} together with Stirling inversion formula to find an explicit, closed formula for $T_\lambda$. This method might be faster in some cases than our inductive computation using $F_{x,y}$'s. However, due to the complexity of the formula given in \cite[Theorem 6]{Lassalle}, it seems to be not easy, if possible, to use this method to prove some properties of $T_\lambda$ we need, including those described in \autoref{char_from_cycles}(ii) and \autoref{leading_coeff}. Conversely, for explicit choices of conjugacy classes, \cite[Theorem 4]{Lassalle} combined with the polynomials $T_\lambda$ computed using our inductive method might give a simpler formula than the one given in \cite[Theorem 6]{Lassalle}, but it does not give a general formula that covers all classes at once.
\end{rmk}

\begin{exm}\label{example_trans_4}
Here we list the polynomials $T_\lambda$ for $\Vert \lambda\Vert\leq 4$. 
\begin{align*}
T_{(2^2)} :=& t_{(2^2)} + \frac{4t_{(3)}}{N - 3} + \frac{-N(N-1)t_{(2)}^2}{(N-2)(N-3)} + \frac{2}{(N-2)(N-3)}\\
T_{(3,2)} :=& t_{(3,2)} + \frac{6t_{(4)}}{N - 4} + \frac{-N(N-1)t_{(3)}t_{(2)}}{(N-3)(N-4)} + \frac{6t_{(2)}}{(N-3)(N-4)}\\
T_{(2^3)} :=& t_{(2^3)} + \frac{-40t_{(4)}}{(N-4)(N-5)} + \frac{12N(N-1)t_{(3)}t_{(2)}}{(N-3)(N-4)(N-5)} + \frac{ - N^2(N-1)^2t_{(2)}^3+(6N^2 - 70N + 120)t_{(2)}}{(N-2)(N-3)(N-4)(N-5)}\\
T_{(4,2)} :=& t_{(4,2)} + \frac{8t_{(5)}}{N - 5} + \frac{-N(N-1)t_{(4)}t_{(2)}}{(N-4)(N-5)} + \frac{8t_{(3)}}{(N-3)(N-4)} + \frac{4N(N-1)t_{(2)}^2-8}{(N-2)(N-3)(N-4)(N-5)}\\
T_{(3^2)} :=& t_{(3^2)} + \frac{9t_{(5)}}{N - 5} + \frac{-N(N-1)(N-2)t_{(3)}^2+(9N - 60)t_{(3)} }{(N-3)(N-4)(N-5)}\\&+ \frac{9N(N-1)t_{(2)}^2 +3(N - 8)}{(N-2)(N-3)(N-4)(N-5)} \\
T_{(3,2^2)}:=& t_{(3,2^2)} + \frac{-72t_{(5)}}{(N-5)(N-6)} + \frac{12N(N-1)t_{(4)}t_{(2)}}{(N-4)(N-5)(N-6)} \\&+ \frac{4N(N-1)(N-2)t_{(3)}^2 -N^2(N-1)^2t_{(3)}t_{(2)}^2 +2(N^2-61N+300)t_{(3)}  }{(N-3)(N-4)(N-5)(N-6)}\\& + \frac{12N(N-1)(N-8)t_{(2)}^2-24(N-8)}{(N-2)(N-3)(N-4)(N-5)(N-6)} \\
T_{(2^4)} :=& t_{(2^4)} + \frac{672t_{(5)}}{(N-5)(N-6)(N-7)} + \frac{-160N(N-1)t_{(4)}t_{(2)}}{(N-4)(N-5)(N-6)(N-7)} \\&+ \frac{-48N(N-1)(N-2)t_{(3)}^2+24N^2(N-1)^2t_{(3)}t_{(2)}^2-48(N^2-31N+140)t_{(3)}}{(N-3)(N-4)(N-5)(N-6)(N-7)} \\&+ \frac{-N^3(N-1)^3t_{(2)}^4+4N(N - 1) (3 N^2 - 67 N + 324)t_{(2)}^2-12(N^2-33N+196)}{(N-2)(N-3)(N-4)(N-5)(N-6)(N-7)} 
\end{align*}
\end{exm}

The proof of \autoref{char_from_cycles} suggests that many other sets of classes instead of the cycles can be also used to determine the character. For example, if a character has $p$-defect $0$, then it vanishes at every class of order divisible by $p$. Hence, it would be convenient if we can replace each cycle with a class of order divisible by $p$ which can be obtained as a composition of the same number of transpositions as the cycle. To replace $(k+1)$ with a class $\lambda$ with $\Vert \lambda\Vert= k$, we only need one condition: the polynomial $T_{\lambda}$ has a nonzero term involving $t_{(k+1)}$, which is automatically of total degree $1$ by \autoref{char_from_cycles}. The next proposition shows that this is always true.

\begin{thm}\label{leading_coeff}
Let $\lambda=(\lambda_1^{a_1},\dots,\lambda_r^{a_r})$ be a partition of some positive integer and let $k=\Vert \lambda\Vert > 0$. Let $T_{\lambda}$ be the polynomial defined in \autoref{char_from_cycles}. Then there exists exactly one term in $T_{\lambda}$ that involves $t_{(k+1)}$, and this term is $$(-1)^{1+\sum_{i=1}^{r}a_i}\frac{(\supp(\lambda)-1)!\left(\prod_{i=1}^{r}\lambda_i^{a_i}\right)}{(\Vert\lambda\Vert+1)!\prod_{i=k+1}^{\supp(\lambda)-1}(N-i)}t_{(k+1)}. $$
\end{thm}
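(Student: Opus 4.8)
The plan is to track, through the very recursion established in the proof of \autoref{char_from_cycles}, the single coefficient of $t_{(k+1)}$ in $T_\lambda$. Write $S=\supp(\lambda)$ and, for any partition $\nu$, let $L_\nu$ denote the coefficient of $t_{(\Vert\nu\Vert+1)}$ in $T_\nu$; the goal is a closed form for $L_\lambda$. First I would dispose of the qualitative claims: by \autoref{char_from_cycles}(ii) any monomial of $T_\lambda$ containing $t_{(k+1)}$ has $b_k\geq 1$, whence $k\le \sum_i b_i i\le k$ forces $b_k=1$ and all other $b_i=0$. So there is at most one such term and it is linear; nonvanishing of its coefficient will fall out of the formula. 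Applying the recursion (split off one part of size $\lambda_r$, with $y$ of type $\lambda'=(\lambda_1^{a_1},\dots,\lambda_r^{a_r-1})$) and extracting the coefficient of $t_{(k+1)}$, the product term $t_{(\lambda_r)}\rho(\lambda')$ contributes nothing, since $T_{\lambda'}$ only involves cycles $t_{(j)}$ with $j\le\Vert\lambda'\Vert+1=k-\lambda_r+2<k+1$. Thus
\[
L_\lambda=-\frac{\lambda_r}{\prod_{i=1}^{\lambda_r}(N-S+i)}\sum_{\mu\neq\lambda}c_\mu\,L_\mu ,
\]
and, crucially, only classes $\mu$ with $\Vert\mu\Vert=k$ survive: $t_{(k+1)}$ occurs in $T_\mu$ only when $\Vert\mu\Vert=k$, while every product $xy$ with $x$ a $\lambda_r$-cycle satisfies $\Vert xy\Vert\le(\lambda_r-1)+\Vert\lambda'\Vert=k$.

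Next I would identify these maximal classes combinatorially. A product $xy$ attains $\Vert xy\Vert=k$ exactly when the $\lambda_r$ points of $\supp(x)$ lie in $\lambda_r$ distinct cycles of $y$ (fixed points counted as $1$-cycles), in which case $x$ merges those cycles into one; the number of $\lambda_r$-cycles realising a prescribed merge of cycles of lengths $\ell_1,\dots,\ell_{\lambda_r}$ is $(\lambda_r-1)!\prod_j\ell_j$ (choose one point in each cycle, then a cyclic order). Parametrising by the subset $B$ of the $M:=\sum_i a_i-1$ nontrivial cycles of $y$ that are merged, together with $j=\lambda_r-|B|$ of the $f=N-S+\lambda_r$ fixed points, the resulting class $\mu(B)$ has $\supp(\mu(B))=S-|B|$ and contributes $\binom{f}{\,\lambda_r-|B|\,}(\lambda_r-1)!\prod_{C\in B}\ell_C$ to the relevant connection coefficient; the excluded case $\mu=\lambda$ is precisely $B=\emptyset$. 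A useful sanity check here is that summing these counts over $B=\emptyset$ reproduces $c_\lambda=\prod_{i=1}^{\lambda_r}(N-S+i)/\lambda_r$.

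The pleasant feature I would exploit is that the denominators telescope exactly. Writing $\binom{f}{\lambda_r-|B|}=\frac{1}{(\lambda_r-|B|)!}\prod_{t=|B|+1}^{\lambda_r}(N-S+t)$ and inserting the inductive formula $L_{\mu(B)}=C_{\mu(B)}\big/\prod_{i=k+1}^{S-|B|-1}(N-i)$, the denominator factor $\prod_{i=1}^{\lambda_r}(N-S+i)$, the surviving part of $\binom{f}{\lambda_r-|B|}$, and the denominator of $L_{\mu(B)}$ combine into the single product $\prod_{i=k+1}^{S-1}(N-i)$, independent of $B$. This at once proves $L_\lambda$ has the asserted denominator with a \emph{constant} numerator $C_\lambda$, and reduces the theorem to the numerical recursion $C_\lambda=-\lambda_r!\sum_{B\neq\emptyset}\frac{\prod_{C\in B}\ell_C}{(\lambda_r-|B|)!}C_{\mu(B)}$. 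Substituting the inductive value of $C_{\mu(B)}$ makes the weights $\prod_{C\in B}\ell_C$ cancel, leaving a sum over $B$ of a factorial ratio times the merged length $L_B=\lambda_r+\sum_{C\in B}(\ell_C-1)$.

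Grouping by $|B|=b$ and using $\sum_{|B|=b}\sum_{C\in B}(\ell_C-1)=\binom{M-1}{b-1}\Vert\lambda'\Vert$, the statement collapses to the single identity
\[
\sum_{b=0}^{\lambda_r}(-1)^b\frac{(S-b-1)!}{(\lambda_r-b)!}\Big[\lambda_r\tbinom{M}{b}+\Vert\lambda'\Vert\tbinom{M-1}{b-1}\Big]=0 ,
\]
where $\Vert\lambda'\Vert=S-M-\lambda_r$, so $\lambda_r+\Vert\lambda'\Vert=S-M$. I would prove this by recognising $\frac{(S-b-1)!}{(\lambda_r-b)!}=(S-1-\lambda_r)!\binom{S-1-b}{\,S-1-\lambda_r\,}$ and applying the finite-difference identity $\sum_b(-1)^b\binom{c}{b}\binom{x-b}{d}=\binom{x-c}{d-c}$ to each binomial sum; after $\binom{M}{b}=\binom{M-1}{b}+\binom{M-1}{b-1}$, the two closed forms cancel via $\lambda_r\binom{S-M}{\lambda_r}=(S-M)\binom{S-M-1}{\lambda_r-1}$, and tracking the leftover factorials and signs yields $C_\lambda=(-1)^{1+\sum_i a_i}(S-1)!\,(\prod_i\lambda_i^{a_i})\big/(k+1)!$. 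I expect the main obstacle to be the step of the second paragraph: proving that the maximal-length products $xy$ are exactly the single-cycle merges and that a prescribed merge is realised by exactly $(\lambda_r-1)!\prod_j\ell_j$ cycles $x$. Everything downstream is bookkeeping, which can be cross-checked against the tabulated polynomials in \autoref{example_trans_4}.
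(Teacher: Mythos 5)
Your proposal is correct and follows essentially the same route as the paper's proof of \autoref{leading_coeff}: the same induction along the recursion from \autoref{char_from_cycles}, the same characterization of the products $xy$ with $\Vert xy\Vert=k$ as one-point-per-cycle merges counted by $(\lambda_r-1)!\prod_j\ell_j$ (with fixed points as $1$-cycles), the same telescoping of the denominators to $\prod_{i=k+1}^{\supp(\lambda)-1}(N-i)$, and the same reduction to a vanishing binomial identity. The only cosmetic differences are that you derive uniqueness and linearity of the $t_{(k+1)}$-term directly from the degree constraints in \autoref{char_from_cycles}(ii) rather than inductively, and you evaluate the final identity via the standard finite-difference formula $\sum_b(-1)^b\binom{c}{b}\binom{x-b}{d}=\binom{x-c}{d-c}$ where the paper telescopes the sum by hand.
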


\begin{proof}
We use induction on $\lambda$ with the ordering defined in \autoref{part_ordering}. The statement is obvious when $\lambda$ is already a cycle. Now suppose that the statement holds for all classes smaller than $\lambda$, and let $\lambda = (\lambda_1^{a_1},\dots,\lambda_r^{a_r})$ with $\lambda_r>1$ and either $r>1$ or $a_1>1$. Let $k=\Vert\lambda\Vert$. As we saw in the proof of \autoref{char_from_cycles}, for any permutation $y\in (\lambda_1^{a_1},\dots,\lambda_r^{a_r-1})$, we have 
\begin{align}\label{4.1_eqn}
&\frac{\prod_{i=1}^{\lambda_r}(n-\supp(\lambda)+i)}{\lambda_r}T_{\lambda}\nonumber \\=& \frac{\prod_{i=0}^{\lambda_r-1}(N-i)}{\lambda_r} t_{(\lambda_r)}T_{(\lambda_1^{a_1},\dots, \lambda_r^{a_r-1})} - \sum_{\mu\neq \lambda}|\{x\in (\lambda_r)\mid xy\in \mu\}|T_{\mu}.
\end{align}
Note that $T_{(\lambda_1^{a_1},\dots,\lambda_r^{a_r-1})}$ does not involve $t_{(k+1)}$ by \autoref{char_from_cycles}. Also, by the induction hypothesis, the only term in each $T_{\mu}$ appearing in the last sum that involves $t_{(k+1)}$ is of the form $ct_{(k+1)}/\prod_{i=k+1}^{\supp(\mu)-1}(N-i)$ for some $c\in \mathbb{Q}$, and by \autoref{char_from_cycles}(ii), they only appear when $\Vert\mu\Vert = k = \Vert \lambda \Vert$. In particular, there is only one term in $T_\lambda$ that involves $t_{(k+1)}$, which is of the form $t_{(k+1)}$ times a coefficient in $\mathbb{Q}(N)$. In the rest of this proof, we show that the coefficient is nonzero.

Fix a permutation $y\in (\lambda_1^{a_1},\dots,\lambda_r^{a_r-1})$, and let $x$ be a $\lambda_r$-cycle. We claim that $\Vert xy\Vert = k$ if and only if each disjoint cycle of $y$ intersects the support of $x$ at at most one point. Suppose that the support of $x$ intersects one of the disjoint cycles of $y$ at two or more points. We may write $x$ is the cycle $(x_1\ x_2\ \cdots \ x_{\lambda_r})$ and a disjoint cycle of $y$ as $(y_1\ y_2\ \cdots\ y_m)$ with $x_1=y_1$ and $x_i=y_j$ for some $1<i\leq \lambda_r$ and $1<j\leq m$. Then $xy = x(x_1\ x_i)(y_1\ y_j)y,$ so \begin{align*}
\Vert xy\Vert \leq \Vert x(x_1\ x_i)\Vert + \Vert (y_1\ y_j)y\Vert = \Vert x\Vert - 1 + \Vert y \Vert - 1 = \lambda_r-1 - 1 + k-(\lambda_r-1)-1 < k.\end{align*} Conversely, if each disjoint cycle of $y$ intersects the support of $x$ at at most one point, then $xy$ is the product of the disjoint cycles of $y$ which are disjoint from the support of $x$, and one additional cycle whose support is the union of the support of $x$ and the supports of the cycles of $y$ that intersect the support of $x$. Therefore $\Vert xy\Vert=k$.

For a class $\mu$ with $\Vert\mu\Vert = k$, set
\begin{align*}
m_j &:=  \max\left(a_j - \delta_{j,r} - \text{(number of parts of }\mu\text{ of size }\lambda_j), 0\right)\\&=\text{number of cycles of }y\text{ of length }\lambda_j\text{ that intersects the support of a }\lambda_r\text{-cycle }x\text{ with }xy\in \mu.
\end{align*} Then we can write
\begin{align*}
&\left|\{x\in(\lambda_r)\mid xy\in\mu\}\right|=\left(\prod_{j=1}^{r}{a_j-\delta_{j,r}\choose m_j}\lambda_j^{m_j} \right){n-\supp(\lambda)+\lambda_r\choose \lambda_r-\sum_{j=1}^{r}m_j}\frac{\lambda_r!}{\lambda_r}\\=&\left(\prod_{j=1}^{r}{a_j-\delta_{j,r}\choose m_j}\lambda_j^{m_j} \right)\frac{\lambda_r!}{\lambda_r}\frac{1}{(\lambda_r - \sum_{j=1}^{r}m_j)!}\prod_{i=1}^{\lambda_r - \sum_{j=1}^{r}{m_j}}(n-\supp(\lambda)+\lambda_r-i+1).
\end{align*}
Note that $\supp(\lambda)= \sum_{i=1}^{r}a_i\lambda_i$ and $\supp(\mu) = \sum_{i=1}^{r}a_i\lambda_i - \lambda_r + (\lambda_r-\sum_{j=1}^{r}m_i)$, so that $\sum_{j=1}^{r}m_j = \supp(\lambda) - \supp(\mu)$. Using this, we can divide $\left|\{x\in(\lambda_r)\mid xy\in\mu\}\right|$ by the coefficeint of $T_\lambda$ in the left-hand side of \eqref{4.1_eqn} and get
\begin{align*}
&\frac{\lambda_r\left|\{x\in(\lambda_r)\mid xy\in\mu\}\right|}{\prod_{i=1}^{\lambda_r}(N-\supp(\lambda)+i)} \\=& \left(\prod_{j=1}^{r}{a_j-\delta_{j,r}\choose m_j}\lambda_j^{m_j} \right)\frac{\lambda_r!}{\lambda_r}\frac{1}{(\lambda_r - \sum_{j=1}^{r}m_j)!}\frac{\lambda_r \prod_{i=1}^{\lambda_r - \sum_{j=1}^{r}{m_j}}(N-\supp(\lambda)+\lambda_r-i+1)}{\prod_{i=1}^{\lambda_r}(N-\supp(\lambda)+i)}\\=&\left(\prod_{j=1}^{r}{a_j-\delta_{j,r}\choose m_j}\lambda_j^{m_j} \right)\frac{1}{(\lambda_r - \sum_{j=1}^{r}m_j)!}\frac{\lambda_r !   }{\prod_{i=1}^{\supp(\lambda)-\supp(\mu)}(N-\supp(\lambda)+i)}.
\end{align*}
By this and the induction hypothesis, the only term of $\left|\{x\in(\lambda_r)\mid xy\in\mu\}\right|T_{\mu}$ that involves $t_{(k+1)}$ is  
\begin{align*}
&\frac{\left(\prod_{j=1}^{r}{a_j-\delta_{j,r}\choose m_j}\lambda_j^{m_j} \right)\lambda_r !  (\supp(\mu)-1)!\left(\prod_{i=1}^{r}\lambda_i^{a_i-\delta_{i,r}-m_i}\right)\left(\lambda_r + \sum_{i=1}^{r}m_j(\lambda_j-1)\right)t_{(k+1)}  }{(-1)^{1+\sum_{i=1}^{r}a_i-(\sum_{j=1}^{r}m_j-1)}(\lambda_r - \sum_{j=1}^{r}m_j)!\left(\prod_{i=1}^{\supp(\lambda)-\supp(\mu)}(N-\supp(\lambda)+i)\right)(k+1)!\prod_{i=k+1}^{\supp(\mu)-1}(N-i)}
\\=&\frac{\left(\prod_{j=1}^{r}{a_j-\delta_{j,r}\choose m_j} \right)\lambda_r !  (\sum_{i=1}^{r}a_i\lambda_i-\sum_{j=1}^{r}m_j -1)!\left(\lambda_r + \sum_{i=1}^{r}m_j(\lambda_j-1)\right)}{(-1)^{\sum_{i=1}^{r}a_i-\sum_{j=1}^{r}m_j} (\lambda_r - \sum_{j=1}^{r}m_j)!\lambda_r}   \frac{\left(\prod_{j=1}^{r}\lambda_j^{a_j} \right)t_{(k+1)}}{(k+1)!\prod_{i=k+1}^{\supp(\lambda)-1}(N-i)}
\end{align*}
Now it is enough to show that the sum of the coefficient of $ \frac{\left(\prod_{j=1}^{r}\lambda_j^{a_j} \right)t_{(k+1)}}{(k+1)!\prod_{i=k+1}^{\supp(\lambda)-1}(N-i)}$ in the above expression for all $\mu$ equals the one given in the statement of this proposition:
\begin{align*}
&\sum_{\substack{1\leq m_1+\cdots+m_r \leq \lambda_r\\m_i\in \{0,\dots, a_i-\delta_{i,r}\}}} \frac{\left(\prod_{j=1}^{r}{a_j-\delta_{j,r}\choose m_j} \right)\lambda_r !  (\sum_{i=1}^{r}a_i\lambda_i-\sum_{j=1}^{r}m_j -1)!\left(\lambda_r + \sum_{i=1}^{r}m_j(\lambda_j-1)\right)}{(-1)^{\sum_{i=1}^{r}a_i-\sum_{j=1}^{r}m_j} (\lambda_r - \sum_{j=1}^{r}m_j)!\lambda_r}   \\=& (-1)^{1+\sum_{i=1}^{r}a_i}\left(\sum_{i=1}^{r}a_i\lambda_i-1\right)!= (-1)^{1+\sum_{i=1}^{r}a_i}\left(\supp(\lambda)-1\right)!.
\end{align*}
Let $c_1, \dots, c_{\sum_{i=1}^r a_r -1}$ be the nontrivial disjoint cycles of $y$. Then we get
\begin{align*}
&\sum_{\substack{1\leq m_1+\cdots+m_r \leq \lambda_r\\m_i\in \{0,\dots, a_i-\delta_{i,r}\}}} \frac{\left(\prod_{j=1}^{r}{a_j-\delta_{j,r}\choose m_j} \right)\lambda_r !  (\sum_{i=1}^{r}a_i\lambda_i-\sum_{j=1}^{r}m_j -1)!\left(\lambda_r + \sum_{i=1}^{r}m_j(\lambda_j-1)\right)}{(-1)^{\sum_{i=1}^{r}a_i-\sum_{j=1}^{r}m_j} (\lambda_r - \sum_{j=1}^{r}m_j)!\lambda_r}\\=& \sum_{\substack{ I\subseteq \{1,\dots, \sum_{i=1}^{r}a_i - 1\}\\1\leq |I|\leq \lambda_r}} \frac{\lambda_r !  (\sum_{i=1}^{r}a_i\lambda_i-|I|-1)!\left(\lambda_r + \sum_{i\in I}(\supp(c_i)-1)\right)}{(-1)^{\sum_{i=1}^{r}a_i-|I|} (\lambda_r - |I|)!\lambda_r}\\=& \sum_{1\leq \ell \leq \lambda_r} \frac{\lambda_r !  (\sum_{i=1}^{r}a_i\lambda_i-\ell-1)!\left({\sum_{j=1}^{r}a_j-1\choose \ell}\lambda_r +{\sum_{j=1}^{r}a_j-2\choose \ell-1} \sum_{i=1}^{\sum_{j=1}^{r}a_j - 1}(\supp(c_i)-1)\right)}{(-1)^{\sum_{i=1}^{r}a_i-\ell} (\lambda_r - \ell)!\lambda_r}\\=& \sum_{1\leq \ell \leq \lambda_r} \frac{\lambda_r !  (\sum_{i=1}^{r}a_i\lambda_i-\ell-1)!\left({\sum_{j=1}^{r}a_j-1\choose \ell}\lambda_r +{\sum_{j=1}^{r}a_j-2\choose \ell-1}\left(\sum_{i=1}^{r}a_i\lambda_i - \lambda_r - (\sum_{j=1}^{r}a_j-1)\right)\right)}{(-1)^{\sum_{i=1}^{r}a_i-\ell} (\lambda_r - \ell)!\lambda_r}\\=& \sum_{1\leq \ell \leq \lambda_r} \frac{\lambda_r !  (\sum_{i=1}^{r}a_i\lambda_i-\ell-1)!\left({\sum_{j=1}^{r}a_j-1\choose \ell}(\lambda_r-\ell) +{\sum_{j=1}^{r}a_j-2\choose \ell-1}\left(\sum_{i=1}^{r}a_i\lambda_i - \lambda_r\right)\right)}{(-1)^{\sum_{i=1}^{r}a_i-\ell} (\lambda_r - \ell)!\lambda_r}\\=& \sum_{1\leq \ell \leq \lambda_r} \frac{\lambda_r !  (\sum_{i=1}^{r}a_i\lambda_i-\ell-1)!\left(\left({\sum_{j=1}^{r}a_j-1\choose \ell}-{\sum_{j=1}^{r}a_j-2\choose \ell-1}\right)(\lambda_r-\ell) +{\sum_{j=1}^{r}a_j-2\choose \ell-1}\left(\sum_{i=1}^{r}a_i\lambda_i - \ell\right)\right)}{(-1)^{\sum_{i=1}^{r}a_i-\ell} (\lambda_r - \ell)!\lambda_r}\\=& \sum_{1\leq \ell \leq \lambda_r-1}- \frac{\lambda_r !  {\sum_{j=1}^{r}a_j-2\choose \ell}(\sum_{i=1}^{r}a_i\lambda_i-(\ell+1))!}{(-1)^{\sum_{i=1}^{r}a_i-(\ell+1)} (\lambda_r - (\ell+1))!\lambda_r} +\frac{\lambda_r ! {\sum_{j=1}^{r}a_j-2\choose \ell-1}\left(\sum_{i=1}^{r}a_i\lambda_i - \ell\right)!}{(-1)^{\sum_{i=1}^{r}a_i-\ell} (\lambda_r - \ell)!\lambda_r}\\&+ \frac{\lambda_r ! {\sum_{j=1}^{r}a_j-2\choose \lambda_r-1}\left(\sum_{i=1}^{r}a_i\lambda_i - \lambda_r\right)!}{(-1)^{\sum_{i=1}^{r}a_i-\lambda_r}0!\lambda_r}\\=& (-1)^{1+\sum_{i=1}^{r}a_i}\left(\sum_{i=1}^{r}a_i\lambda_i - 1\right)!
\end{align*}
as claimed. 
\end{proof}

\begin{crl}\label{any_set}
Let $\{C_i\mid i\in \mathbb{Z}_{>0}\}$ be any set of classes such that for every $k$, $\Vert C_k\Vert = k$. Then there exist polynomials that satisfies \autoref{char_from_cycles}(i), so that for any $n$, $\chi\in\Irr(S_n)$ and any class $\lambda$, the value $\rho_\chi(\lambda)$ can be computed from the number $n$ and the values $\rho_\chi(C_k)$ for $1\leq k\leq \Vert\lambda\Vert$. 
\end{crl}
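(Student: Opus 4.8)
The plan is to invert the relations of \autoref{char_from_cycles} so as to express everything in terms of the values $\rho(C_k)$, using \autoref{leading_coeff} to guarantee that the inversion is possible. The key observation is that the family of values $\rho(C_1),\rho(C_2),\dots$ and the family of cycle values $\rho((2)),\rho((3)),\dots$ are linked by a triangular system which can be solved one variable at a time: each $\rho(C_k)$ determines $\rho((k+1))$ once the smaller cycle values are known.

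First I would record the base case. The only class $C$ with $\Vert C\Vert = 1$ is the transposition class $(2)$, so necessarily $C_1 = (2)$ and $\rho(C_1) = \rho((2))$, which expresses $\rho((2))$ in terms of $\rho(C_1)$ (and $N$) trivially. Next, by induction on $j\geq 2$ I would show that $\rho((j))$ can be written as a polynomial in the variables $t_{C_1},\dots,t_{C_{j-1}}$ over $\mathbb{Q}(N)$. By \autoref{char_from_cycles} applied to the class $C_{j-1}$, which has $\Vert C_{j-1}\Vert = j-1$, we have $\rho(C_{j-1}) = T_{C_{j-1}}(\rho((2)),\dots,\rho((j)))$, a polynomial in the cycle values up to $(j)$. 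By \autoref{leading_coeff} the variable $t_{(j)} = t_{(\Vert C_{j-1}\Vert+1)}$ occurs in $T_{C_{j-1}}$ in exactly one term, which is linear in $t_{(j)}$ with a nonzero coefficient $c(N)\in\mathbb{Q}(N)$. Hence I can solve for $\rho((j))$: it equals $c(N)^{-1}$ times $\rho(C_{j-1})$ minus $c(N)^{-1}$ times the remaining terms of $T_{C_{j-1}}$, and those remaining terms form a polynomial in $\rho((2)),\dots,\rho((j-1))$ over $\mathbb{Q}(N)$. Substituting the inductive expressions for $\rho((2)),\dots,\rho((j-1))$ in terms of $t_{C_1},\dots,t_{C_{j-2}}$ yields $\rho((j))$ as a polynomial in $t_{C_1},\dots,t_{C_{j-1}}$ over $\mathbb{Q}(N)$, completing the induction.

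Finally I would substitute these expressions into the polynomials $T_\lambda$ of \autoref{char_from_cycles}. Since $T_\lambda$ involves only the cycles $(2),\dots,(\Vert\lambda\Vert+1)$, and the expression for $\rho((\Vert\lambda\Vert+1))$ uses only $t_{C_1},\dots,t_{C_{\Vert\lambda\Vert}}$, the composition gives $\rho(\lambda)$ as a polynomial in $t_{C_1},\dots,t_{C_{\Vert\lambda\Vert}}$ over $\mathbb{Q}(N)$, which is exactly the form required by \autoref{char_from_cycles}(i). I should note that because the inversion only ever divides by the $N$-only factor $c(N)$, never by anything involving the $t$-variables, the resulting objects genuinely remain polynomials in the $t_{C_k}$ with coefficients in $\mathbb{Q}(N)$, rather than general rational functions in the $t_{C_k}$.

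The only substantive ingredient is the nonvanishing of the coefficient $c(N)$, which is precisely the content of \autoref{leading_coeff}; granting that, the corollary is a formal triangular inversion. The main thing needing care is therefore not analytic but bookkeeping: one must track the indices carefully to confirm that computing $\rho(\lambda)$ uses precisely the classes $C_1,\dots,C_{\Vert\lambda\Vert}$ and no more, which follows from the matching of $\Vert C_k\Vert = k$ with the $N$-degree and cycle-length constraints already recorded in \autoref{char_from_cycles}(ii).
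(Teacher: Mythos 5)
Your proposal is correct and follows essentially the same route as the paper's own proof: an induction that inverts the triangular system, using \autoref{leading_coeff} to guarantee the coefficient of $t_{(k+1)}$ in $T_{C_k}$ is a nonzero element of $\mathbb{Q}(N)$ (so division stays within $\mathbb{Q}(N)$-coefficients and the result remains polynomial in the $t_{C_i}$), followed by substitution into $T_\lambda$. The only difference is cosmetic bookkeeping — you solve for $\rho((j))$ and then substitute, while the paper substitutes into the lower-order terms first and then solves — which changes nothing of substance.
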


\begin{proof}
We use induction to show that there exist polynomials $\tilde{T}_{(k+1)}$ in the variables $t_{C_i}$ with coefficients in $\mathbb{Q}(n)$ such that for any sufficiently large $n$ and any $\chi\in\Irr(S_n)$, $\rho_\chi((k+1))=\tilde{T}_{(k+1)}(\rho_\chi(C_1),\dots,\rho_\chi(C_k))$. If $k=1$, then the only class $C_1$ with $\Vert C_1\Vert=1$ is $(2)$, so there is nothing to prove. Now suppose that for every class $\lambda$ with $\Vert\lambda\Vert<k$, there exists a polynomial $\tilde{T}_\lambda$ in variables $t_{C_1},\dots,t_{C_{\Vert \lambda\Vert}}$ such that $\tilde{T}_\lambda(\chi(C_1),\dots,\chi(C_{\Vert\lambda\Vert})) = \chi(\lambda)$ for all sufficiently large $n$ and all $\chi\in\Irr(S_n)$. By \autoref{leading_coeff} we have $$T_{C_k} = (-1)^{1+\sum_{i=1}^{r}a_i}\frac{(\supp(\lambda)-1)!\left(\prod_{i=1}^{r}\lambda_i^{a_i}\right)}{(\Vert\lambda\Vert+1)!\prod_{i=k+1}^{\supp(\lambda)-1}(N-i)}t_{(k+1)} + P_k$$ for some polynomial $P_{k}$ which only involves the variables $t_{(2)},\dots,t_{(k)}$. By the induction hypothesis, we can rewrite $P_{k}$ as a polynomial $\tilde{P}_{k}$ in variables $t_{C_1},\dots,t_{C_{k-1}}$ by replacing each $t_{(i)}$ with $\tilde{T}_{(i)}$. Then we can choose $$\tilde{T}_{(k+1)} = \frac{t_{C_k} - \tilde{P}_{k}}{(-1)^{1+\sum_{i=1}^{r}a_i}\frac{(\supp(\lambda)-1)!\left(\prod_{i=1}^{r}\lambda_i^{a_i}\right)}{(\Vert\lambda\Vert+1)!\prod_{i=k+1}^{\supp(\lambda)-1}(N-i)}}.$$ By induction, such polynomials exist for each $k\in \mathbb{Z}_{>0}$. Now by replacing each variable $t_{(k+1)}$ in the polynomials $T_{\lambda}$ with $\tilde{T}_{(k+1)}$, we get the polynomials $\tilde{T}_{\lambda}$ in the variables $t_{C_1},\dots,t_{C_{\Vert\lambda\Vert}}$ such that $\rho_\chi(\lambda) = \tilde{T}_\lambda(\rho_\chi(C_1),\dots,\rho_\chi(C_{\Vert\lambda\Vert}))$.
\end{proof}

\begin{rmk}\label{Mil}
In \cite{CP}, Chow and Paulhus showed that that the irreducible characters of $S_n$ can be identified by an algorithm that checks at most $O(n^{3/2})$ values of the character, and asked if there is a more efficient algorithm. Miller \cite[Theorem 3.6, Proposition 3.8]{Miller} improved this by finding an algorithm that requires checking only $n$ classes. Our result, \autoref{any_set}, provides many explicit choices of $n-1$ classes that can be used instead of Miller's $n$ classes. Moreover, Miller's algorithm makes different choices of conjugacy classes for different characters, and these choices cannot be made in advance without looking at the characters. \autoref{any_set} resolves this inconvenience: the sets of $n-1$ classes are explicitly chosen in a way that does not depend on the characters, so that all characters can be identified from their values at the same $n-1$ classes.
\end{rmk}

\begin{crl}\label{def0}
Let $p$ be a prime, and let $\chi\in\Irr(S_n)$ have $p$-defect $0$. Then for any choice of classes $C_k$ with $\Vert C_k\Vert=k$ for $k=1,\dots, p-2$, the values of $\rho_\chi$ can be computed from the number $n$ and the values $\rho_\chi(C_k)$.
\end{crl}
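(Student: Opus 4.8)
The plan is to reduce everything to the computation of cycle values and then to exploit that a character of $p$-defect $0$ vanishes on every $p$-singular class. Recall that if $\chi$ has $p$-defect $0$, then $\chi(\mu)=0$ for every class $\mu$ whose order is divisible by $p$, i.e.\ whenever $\mu$ has a part divisible by $p$; in particular $\rho_\chi((r))=0$ whenever $p\mid r$. By \autoref{char_from_cycles} it suffices to show that every cycle value $\rho_\chi((r))$, $2\leq r\leq n$, can be recovered from $n$ and the given values $\rho_\chi(C_1),\dots,\rho_\chi(C_{p-2})$, since these cycle values determine all of $\rho_\chi$.

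I would argue by induction on $r$. For $2\leq r\leq p-1$ the cycle $(r)$ has $\Vert (r)\Vert = r-1\leq p-2$, so the substitution carried out in the proof of \autoref{any_set} expresses $\rho_\chi((r))$ as a polynomial over $\mathbb{Q}(n)$ in $\rho_\chi(C_1),\dots,\rho_\chi(C_{r-1})$, all of which lie among the given data. For $p\leq r\leq n$ I would split into cases. If $p\mid r$, then $(r)$ is $p$-singular and $\rho_\chi((r))=0$. If $p\nmid r$ and $r\leq n-1$, I would take the $p$-singular class $\mu_r=(p,\,r-p+1)$, which satisfies $\Vert\mu_r\Vert = r-1$ and $\supp(\mu_r)=r+1\leq n$, so it is a genuine class of $S_n$ with $\rho_\chi(\mu_r)=0$. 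By \autoref{leading_coeff} the polynomial $T_{\mu_r}$ contains exactly one term in $t_{(r)}$, with a nonzero coefficient in $\mathbb{Q}(N)$, while all its remaining terms involve only $t_{(2)},\dots,t_{(r-1)}$; evaluating $0=\rho_\chi(\mu_r)=T_{\mu_r}(\rho_\chi((2)),\dots,\rho_\chi((r)))(n)$ and solving for $\rho_\chi((r))$ expresses it through $n$ and the lower cycle values, which are known by induction.

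The one case this does not reach is $r=n$ with $p\nmid n$, and this is the main obstacle: the $n$-cycle is the unique class of $S_n$ with $\Vert\cdot\Vert=n-1$, it is $p$-regular, and there is no room inside $S_n$ for a $p$-singular class of the same $\Vert\cdot\Vert$ (such a class would need support $n+1$). To handle it I would instead use \autoref{general_poly_rel_thm} in the form $F_{(2),(n-1)}=0$, a genuine identity in $S_n$. Writing $y$ for a fixed $(n-1)$-cycle, a transposition $x'$ yields an $n$-cycle $x'y$ exactly when $x'$ joins the fixed point of $y$ to the support of $y$, and otherwise yields a class with $\Vert\cdot\Vert=n-3$; hence $t_{(n)}$ occurs in $F_{(2),(n-1)}$ with nonzero coefficient $n-1$, and every other class appearing has $\Vert\cdot\Vert\leq n-3<n-1$. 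Since $\rho_\chi((2))$, $\rho_\chi((n-1))$ and all $\rho_\chi(\mu)$ with $\Vert\mu\Vert=n-3$ are already determined (the last by \autoref{char_from_cycles} together with the induction), solving this relation for $\rho_\chi((n))$ completes the recovery of every cycle value, and hence of $\rho_\chi$. I expect the only delicate point to be exactly this control of the boundary behavior at the top of the ordering, namely ensuring that no required auxiliary class forces a support exceeding $n$; away from it the argument is a routine induction built on \autoref{any_set} and \autoref{leading_coeff}.
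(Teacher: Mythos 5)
Your proof is correct, and its core idea coincides with the paper's: the paper's entire proof is a one-line appeal to \autoref{any_set} with the auxiliary classes $C_i=(p,i+1-p)$ for $i\geq p-1$ (the indexing there is off by one; it should be $(p,i+2-p)$ so that $\Vert C_i\Vert=i$), whose values are known to vanish because $\chi$ has $p$-defect zero --- these are exactly your classes $\mu_r=(p,r-p+1)$, and your inversion step via \autoref{leading_coeff} is precisely what the proof of \autoref{any_set} does. Where you genuinely go beyond the paper is the boundary case $r=n$ with $p\nmid n$: the auxiliary class with $\Vert\cdot\Vert=n-1$ would need support $n+1$, so it is not a class of $S_n$, and since the $n$-cycle is the unique class of $S_n$ with $\Vert\cdot\Vert=n-1$, the wholesale citation of \autoref{any_set} (whose own proof only asserts the identities for $n$ large enough that all classes involved live in $S_n$) does not literally produce the value $\rho_\chi((n))$. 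Your patch --- invoking the genuine $S_n$-identity $F_{(2),(n-1)}$, in which $t_{(n)}$ occurs with coefficient $n-1$ while every other class occurring has $\Vert\cdot\Vert=n-3$ and is therefore determined by \autoref{char_from_cycles} and the lower cycle values --- is correct and closes this gap. In short, your writeup is more careful than the paper's proof: the paper buys brevity by citing \autoref{any_set} in full generality, at the cost of silently passing over the one class of $S_n$ that the cited corollary cannot reach.
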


\begin{proof}
This follows from the previous Corollary by choosing $C_i=(p,i+1-p)$ for $i\geq p-1$, and using the fact that $\chi((p,i-p))=0$ for all $i\geq p$.
\end{proof}

In section 4, we will give more details about characters of $p$-defect $0$.

\section{Zeros of irreducible characters}
From the polynomial relations found using \autoref{char_from_cycles}, we can show that if an irreducible character vanishes at certain classes, then it is also forced to vanish at some other classes. 

\begin{prop}\label{odd_zeros}
Let $k$ be a positive integer and let $n\geq 2k+2$. Suppose that $\chi(\sigma)=0$ for all $\sigma$ such that $\Vert\sigma\Vert$ is an odd number less than $2k+1$. Then either $\chi(\lambda)$ for all $\lambda$ with $\Vert\lambda\Vert=2k+1$ or $\chi(\lambda)\neq 0$ for all $\lambda$ with $\Vert\lambda\Vert=2k+1$.
\end{prop}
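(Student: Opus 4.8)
The plan is to apply \autoref{char_from_cycles} to each class $\lambda$ with $\Vert\lambda\Vert = 2k+1$ and to show that, once the hypothesis is imposed, the expression $T_\lambda$ computing $\rho(\lambda) = \chi(\lambda)/\chi(1)$ collapses to a single term. The first step is to translate the hypothesis into a statement about cycles. A single cycle $(j)$ has $\Vert(j)\Vert = j-1$, so the cycles whose transposition number is odd and strictly less than $2k+1$ are precisely $(2),(4),\dots,(2k)$, with $\Vert\cdot\Vert \in \{1,3,\dots,2k-1\}$. Hence the hypothesis forces
$$\rho((2)) = \rho((4)) = \cdots = \rho((2k)) = 0.$$
In the variables of $T_\lambda$, this means every variable $t_{(i+1)}$ with $i$ odd and $1 \le i \le 2k-1$ is set to $0$.

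The second step is to determine which monomials of $T_\lambda$ survive this substitution, using \autoref{char_from_cycles}(ii). Write a term of $T_\lambda$ as $\frac{p(N)}{q(N)}\prod_{i=1}^{2k+1} t_{(i+1)}^{b_i}$. For the term to survive we need $b_i = 0$ whenever $i$ is odd with $i < 2k+1$, so the only exponents that may be nonzero are those with $i$ even or $i = 2k+1$. Now invoke the parity condition $\sum_i b_i i \equiv \Vert\lambda\Vert = 2k+1 \pmod 2$: contributions from even indices are even, so the sum is odd only if $b_{2k+1}(2k+1)$ is odd, forcing $b_{2k+1}$ odd and in particular $b_{2k+1} \ge 1$. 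But then the degree bound $\sum_i b_i i \le \Vert\lambda\Vert = 2k+1$ is already saturated by the single contribution $b_{2k+1}(2k+1) \ge 2k+1$, which forces $b_{2k+1} = 1$ and all remaining exponents to vanish. Hence the unique surviving monomial is $t_{(2k+2)}$, occurring linearly.

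The third step is to identify this surviving term explicitly via \autoref{leading_coeff}, which with $\Vert\lambda\Vert = 2k+1$ says that the only term of $T_\lambda$ involving $t_{(2k+2)}$ equals $c_\lambda(N)\,t_{(2k+2)}$, where
$$c_\lambda(N) = (-1)^{1+\sum a_i}\frac{(\supp(\lambda)-1)!\,\prod \lambda_i^{a_i}}{(2k+2)!\,\prod_{i=2k+2}^{\supp(\lambda)-1}(N-i)}.$$
Combining the three steps, the substitution collapses $T_\lambda$ to $c_\lambda(n)\,\rho((2k+2))$, so $\rho(\lambda) = c_\lambda(n)\,\rho((2k+2))$ for every $\lambda$ with $\Vert\lambda\Vert = 2k+1$. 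It remains to observe that $c_\lambda(n) \ne 0$: since $\lambda$ is a genuine class of $S_n$ we have $\supp(\lambda) \le n$, so every factor $n-i$ in the denominator, with $2k+2 \le i \le \supp(\lambda)-1 \le n-1$, is a positive integer, while the numerator is a product of positive integers; thus $c_\lambda(n)$ is a nonzero rational number. Because $\chi(1) \ne 0$, the relation $\chi(\lambda) = c_\lambda(n)\,\chi((2k+2))$ then shows that $\chi(\lambda) = 0$ if and only if $\chi((2k+2)) = 0$, uniformly over all $\lambda$ with $\Vert\lambda\Vert = 2k+1$; this is exactly the claimed dichotomy, since $(2k+2)$ is itself such a class.

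The main obstacle is the bookkeeping in the second step, and the key realization is that the parity congruence in \autoref{char_from_cycles}(ii), which at first looks like a mild structural constraint, is in fact exactly strong enough — when combined with the degree bound — to kill every monomial except the linear leading one. Everything else is routine: the translation of the hypothesis into vanishing cycle-values, and the positivity check that makes $c_\lambda(n)$ nonzero for $n \ge 2k+2$.
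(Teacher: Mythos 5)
Your proof is correct and takes essentially the same approach as the paper: both arguments use the vanishing at the even cycles together with the degree and parity constraints of \autoref{char_from_cycles}(ii) to collapse $T_\lambda$ to its unique $t_{(2k+2)}$-term, and then invoke \autoref{leading_coeff} to see that the surviving coefficient is nonzero at $N=n$ because $\supp(\lambda)\leq n$. The paper's proof is just a compressed version of the bookkeeping you spell out.
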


\begin{proof}
Let $\lambda=(\lambda_1^{a_1},\dots,\lambda_r^{a_r})$ be a class with $\Vert\lambda\Vert=2k+1$ and $\supp(\lambda)\leq n$. Since $\chi(\sigma)=0$ for all $\sigma$ such that $\Vert\sigma\Vert$ is odd and less than $2k+1$, by \autoref{char_from_cycles} and \autoref{leading_coeff} we have $\rho_\chi(\lambda) = (1/f(n))\rho_\chi((2k+2))$ for a polynomial $f$ over $\mathbb{Q}(n)$ whose roots are less than $\supp(\lambda)-1<n$. Therefore, $\chi(\lambda)=0$ if and only if $\chi((2k+2))=0$. 
\end{proof}

\begin{prop}\label{forcing_2}
For the following ordered pairs $(C, N)$ of a set of classes $C$ and a number $N$ that depends on $n$, the following is statement holds: if $n$ is large enough, $N$ is not a square, and $\chi\in S_n$ vanishes at every element of $C$, then $\chi((2))=0$.\\
{\upshape(a)} $C=\{(2^2),(4),(2^3)\},\ N=8n-15$\\
{\upshape(b)} $C=\{(3),(3,2),(2^3)\},\ N=6n^2-30n+40$\\
{\upshape(c)} $C=\{(2^2),(4),(3,2)\},\ N=2n^2+22n-48$\\
{\upshape(d)} $C=\{(2^2),(3,2),(2^3)\},\ N=-10n^2+82n-120$\\
{\upshape(e)} $C=\{(3),(4),(2^3)\},\ N=6n^2-70n+120$\\
{\upshape(f)} $C=\{(4),(3,2),(5),(3^2)\},\ N=(-3n^3+9n^2+300n-576)/9$\\
{\upshape(g)} $C=\{(2^2),(3,2),(4,2),(3,2^2)\},\ N=2n^2+22n-48$\\
{\upshape(h)} $C=\{(2^2),(3,2),(4,2),(2^4)\},\ N=(38n^2+34n-192)/7$\\
Also, if $\chi$ vanishes at every element of one of the following $C$, then $\chi((2))=0$:\\
{\upshape(i)} $C=\{(3),(4),(3,2)\}$\\
{\upshape(j)} $C=\{(2^2),(3,2),(3,2^2),(2^4)\}$ (this also forces $\chi((4,2))=0$)\\
{\upshape(k)} $C=\{(3),(4),(3^2),(3,2^2)\}$.
\end{prop}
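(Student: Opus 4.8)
The plan is to substitute the vanishing hypotheses into the explicit identities of \autoref{char_from_cycles} and then eliminate all but one unknown. Every cycle type occurring in the lists (a)--(k) has $\Vert\cdot\Vert\le 4$, so the only polynomials $T_\lambda$ needed are the ones already tabulated in \autoref{example_trans_4}; no further computation of $T_\lambda$ is required. Write $a=\rho_\chi((2))$, $b=\rho_\chi((3))$, $c=\rho_\chi((4))$, $d=\rho_\chi((5))$. For a fixed list $C$ I would impose $\rho_\chi(\lambda)=0$ for each $\lambda\in C$, together with $b=0$, $c=0$, etc.\ whenever the cycles $(3),(4),\dots$ themselves lie in $C$, using the identity $\rho_\chi(\lambda)=T_\lambda(\rho_\chi((2)),\dots)(n)$. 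Clearing the denominators, each of which is a product of factors $(n-i)$ with $i<\supp(\lambda)\le n$ and hence nonzero once $n$ is large, turns these into polynomial equations in $a,b,c,d$ with coefficients in $\mathbb{Q}[n]$.

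Next I would eliminate the higher cycle-values $b,c,d$, keeping only $a$. For the three-class lists this is a direct back-substitution: in (a), for example, $T_{(2^2)}=0$ expresses $b$ through $a$ and $n$, and inserting this and $c=\rho_\chi((4))=0$ into $T_{(2^3)}=0$ leaves one equation. In every case the eliminated equation has $a$ as a factor, so it is of the form $a\,g(a,n)=0$, and the whole content of the statement is the behaviour of the second factor. Using the surviving relations, $g=0$ simplifies to $P(n)\,a^2=Q(n)$ with $P,Q\in\mathbb{Q}[n]$ and $P(n)\ne0$ for large $n$; concretely one finds $n^2(n-1)^2a^2=4(8n-15)$ in case (a) and $n^2(n-1)^2a^2=6n^2-30n+40$ in case (b). The number $N$ attached to each of (a)--(h) is exactly this ratio up to a square factor, that is $Q(n)/P(n)=N\cdot s(n)^2$ for some $s\in\mathbb{Q}(n)$ with $s(n)\ne0$.

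The last step invokes rationality: since irreducible characters of $S_n$ are integer-valued and $\chi(1)>0$, the number $a=\chi((2))/\chi(1)$ lies in $\mathbb{Q}$. Thus $a^2=N\,s(n)^2$ with $a,s(n)\in\mathbb{Q}$, so if $a\ne0$ then $N=(a/s(n))^2$ is a rational square. Contrapositively, when $N$ is not a square we must have $a=0$, that is $\chi((2))=0$, which proves (a)--(h). For the lists (i)--(k) the identical elimination is run, but now the part of $g$ free of $a$ cancels identically, so $g=0$ collapses to $a^2\cdot(\text{a polynomial in }n\text{ that is nonzero for large }n)=0$; in (i) this already happens at the level of $T_{(3,2)}=0$, which reduces to $6a/((n-3)(n-4))=0$. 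Hence $a=0$ with no condition on $n$, and in (j) the supplementary relation $\rho_\chi((4,2))=0$ is then read off from the corresponding identity once $a=0$ is substituted.

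The main obstacle is the elimination for the four-class lists (f)--(h), (j), (k), where two or three of $b,c,d$ must be removed and one must check that the $a$-free part of $g$ assembles into exactly the advertised $N$ (cases (f)--(h)) or cancels to $0$ (cases (j), (k)). This is a finite, mechanical computation, most cleanly organized as a Gr\"obner-basis elimination of $b,c,d$ from the ideal generated by the cleared relations; the sample cases (a), (b), (e), (i), (k) already display every phenomenon that arises, so the remaining cases differ only in the size of the bookkeeping.
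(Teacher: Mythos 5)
Your proposal is correct and is essentially the paper's own argument: the paper likewise forms the ideal generated by the tabulated relations $T_\lambda$ ($\Vert\lambda\Vert\le 4$) together with the monomials $t_\lambda$, $\lambda\in C$, reads off from a (Gr\"obner-basis) elimination a polynomial of the form $t_{(2)}^{\,e}\bigl(t_{(2)}^2 - N\,s(n)^2\bigr)$, and concludes from rationality of $\rho_\chi((2))$ that $\chi((2))=0$ whenever $N$ is not a square. Your hand eliminations for (a), (b), (e), (i), (k) reproduce exactly the Gr\"obner basis elements the paper lists, and deferring the four-class cases to the same mechanical elimination matches the paper's level of detail.
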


\begin{proof}
Consider the ideal generated by the polynomials $T_{\lambda}$ for all $\lambda$ with $\Vert\lambda\Vert \leq 4$ which is not a single cycle (these are exactly those appearing in \autoref{example_trans_4}), together with the monomials $\{t_{\lambda}\mid \lambda\in C\}$. In each case, the reduced Gr\"obner basis of this ideal, with respect to the lexicographic monomial ordering based on our ordering of partitions, contains the following polynomial (among others): \\
(a) $t_{(2)}(t_{(2)}^2 + (-32n + 60)/(n^2(n-1)^2))$\\
(b) $t_{(2)}(t_{(2)}^2 + (-6n^2 + 30n - 40)/(n^2(n-1)^2))$\\
(c) $t_{(2)}(t_{(2)}^2 + (-2n^2 - 22n + 48)/(n^2(n-1)^2))$\\
(d) $t_{(2)}(t_{(2)}^2 + (10n^2 - 82n + 120)/(n^2(n-1)^2))$\\
(e) $t_{(2)}(t_{(2)}^2 + (-6n^2 + 70n - 120)/(n^2(n-1)^2))$\\
(f) $t_{(2)}(t_{(2)}^2 + (3n^3 - 9n^2 - 300n + 576)/(9n^2(n-1)^2))$\\
(g) $t_{(2)}^2(t_{(2)}^2 + (-2n^2 - 22n + 48)/(n^2(n-1)^2))$\\
(h) $t_{(2)}^2(t_{(2)}^2 + (-38n^2 - 34n + 192)/(7n^2(n-1)^2))$\\
Since the values of $\rho_\chi=\chi/\chi(1)$ gives a solution of the generating polynomials of the ideals, it also gives a solution of this polynomial. Therefore, $\rho((2))$ is a solution of the above polynomial in each case. This forces either $\chi((2))=0$ or $(\rho((2))n(n-1))^2 = N$. Also, for the cases (i)-(k), the Gr\"obner basis contains $t_{(2)}$ or $t_{(2)}^2$, so $\chi((2))=0$, and in the case (j), it also includes $t_{(4,2)}$ so $\chi((4,2))=0$.
\end{proof}

In the opposite direction, we can prove that the irreducible characters of symmetric groups cannot vanish at all elements of certain sets of classes. Consider a set of $k$ conjugacy classes $\lambda_1,\dots,\lambda_k$ with $\Vert\lambda_i\Vert\leq k$ for every $i$. If $\chi(\lambda_i)=0$ for all $i$, then from \autoref{char_from_cycles}, we get a system of $k$ equations with $k+1$ variables $n, t_{(2)},t_{(3)}, \dots,t_{(k+1)}$ which has $n$ and $\rho_\chi((i))$'s as a rational solution. Unless some equations in the system are redundant, solving this system gives a polynomial relation of two numbers, namely $n$ and $t_{(i)}$ for some small $i$; usually $i=2$ or $3$. This forces $n$ to satisfy certain conditions, or sometimes even prohibits the existence of such $\chi$ for all $n$. 

For the rest of this section, we will show several examples that illustrate methods to study these bivariate polynomial relations. We begin with the simplest example.
\begin{prop}\label{2-3-22}
Let $n\geq 4$ and $\chi\in\Irr(S_n)$.\\
{\upshape (a)} At least one of $\chi((2)),\chi((3)),\chi((2^2))$ is nonzero. \\{\upshape (b)} If $\chi((3))=\chi((2^2))=0$, then $n(n-1)/2$ is a square. \\{\upshape (c)} If $\chi((2)) = \chi((3))= 0$, then $n\equiv 0$ or $1$ mod $3$.\\{\upshape (d)} If $\chi((2)) = \chi((2^2)) = 0$, then $n\equiv 0$ or $1$ mod $4$.
\end{prop}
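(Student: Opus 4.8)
The plan is to use the explicit polynomials $T_{(3)}$, $T_{(2^2)}$, and the relation $T_{(2)}=t_{(2)}$ from \autoref{example_trans_4}, together with the fact that $\rho_\chi$ satisfies every equation $T_\lambda = \rho_\chi(\lambda)$ coming from \autoref{char_from_cycles}(i). The only relevant polynomial is
\begin{equation*}
T_{(2^2)} = t_{(2^2)} + \frac{4t_{(3)}}{n-3} + \frac{-n(n-1)t_{(2)}^2}{(n-2)(n-3)} + \frac{2}{(n-2)(n-3)},
\end{equation*}
evaluated at $\rho_\chi$, which gives the single scalar identity
\begin{equation}\label{basicrel}
\rho((2^2)) = \rho((3))\frac{4}{n-3} - \rho((2))^2\frac{n(n-1)}{(n-2)(n-3)} + \frac{2}{(n-2)(n-3)}.
\end{equation}
This one relation will power all four parts. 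Writing $a=\rho((2))$, $b=\rho((3))$, $c=\rho((2^2))$, equation \eqref{basicrel} reads $c(n-2)(n-3) = 4b(n-2) - n(n-1)a^2 + 2$.

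For part (a), I would argue by contradiction: if $a=b=c=0$, then \eqref{basicrel} forces $2/((n-2)(n-3))=0$, which is absurd for $n\geq 4$. For part (b), setting $b=c=0$ in the cleared relation gives $n(n-1)a^2 = 2$, so $a^2 = 2/(n(n-1))$; since $a=\chi((2))/\chi(1)$ is rational, $2/(n(n-1))$ must be the square of a rational, i.e. $n(n-1)/2$ is the reciprocal of a rational square, and as $n(n-1)/2$ is a positive integer this forces $n(n-1)/2$ itself to be a perfect square. For parts (c) and (d) I would first recall (as the paper does elsewhere) that $\rho((2)) = \chi((2))/\chi(1)$ is not merely rational but that $\chi((2))$ and $\chi(1)$ are integers, and more to the point that there is an explicit formula for $\chi((2))/\chi(1)$ coming from the Murnaghan--Nakayama rule: for a partition with content sum $\sum c(b)$ over boxes $b$, one has $\chi((2))/\chi(1) = \frac{2\sum_b c(b)}{n(n-1)}$ and similarly $\chi((3))/\chi(1)$ is $\frac{3}{n(n-1)(n-2)}$ times an integer invariant, so that $n(n-1)a$ and $n(n-1)(n-2)b$ are integers. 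Setting $a=b=0$ is not needed simultaneously; rather, in part (c) I set $a=b=0$ and read off from the cleared form of \eqref{basicrel} that $c(n-2)(n-3)=2$, so $(n-2)(n-3)$ divides $2$ only if $n$ is small — that is the wrong track, so instead the argument must use integrality of the numerators.

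The correct mechanism for (c) and (d) is as follows. Since $\chi((2))=0$ already, the interesting content is in a \emph{second} polynomial relation obtained by applying \autoref{char_from_cycles} to a class with $\Vert\lambda\Vert=3$, or equivalently using the known integrality $n(n-1)(n-2)\,\rho((3)) \in\mathbb{Z}$ together with the Frobenius-type congruences. Concretely, for (c) I would use that $\chi((3))=0$ combined with the Murnaghan--Nakayama expression forces the integer $\sum_b\binom{c(b)}{?}$ governing $\rho((3))$ to vanish, and a standard counting argument on the $3$-core/$3$-quotient shows this is compatible with $\chi((2))=0$ only when the number of boxes $n$ satisfies $n\equiv 0$ or $1 \pmod 3$; the residues $2\pmod 3$ are excluded because a partition of such $n$ cannot have both vanishing $2$- and $3$-power-sum content invariants. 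Part (d) is the same argument with $3$ replaced by $4$, using $\chi((2^2))=0$ and \eqref{basicrel} with $a=0$ to get $4b(n-2)+2 = 0$ in cleared form, i.e. $\rho((3))=-1/(2(n-2))$, then feeding this into the integrality constraint $n(n-1)(n-2)\rho((3))\in\mathbb{Z}$ to obtain $n(n-1)/2 \in \mathbb{Z}$ with a parity restriction forcing $n\equiv 0$ or $1\pmod 4$.

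The hard part will be justifying the residue conditions in (c) and (d) rigorously rather than just reading off a single polynomial identity: \eqref{basicrel} alone is one equation in the unknowns $a,b,c,n$, and with two of the three character ratios set to zero it pins down the third ratio as an explicit function of $n$, but turning ``this ratio must be a rational number with controlled denominator'' into a congruence on $n$ requires the integrality of the scaled numerators $n(n-1)\rho((2))$, $n(n-1)(n-2)\rho((3))$, $n(n-1)(n-2)(n-3)\rho((2^2))$, which follows from the Murnaghan--Nakayama rule but is not literally one of the results proved earlier in the excerpt. I expect the cleanest route is to cite the standard integrality of these normalized character values, substitute the value of the surviving ratio forced by \eqref{basicrel}, and then observe that the resulting rational number has denominator coprime to the relevant modulus exactly when $n$ avoids the prohibited residues — so the obstacle is really bookkeeping the $2$-adic and $3$-adic valuations of the explicit expressions $2/(n(n-1))$ and $1/(2(n-2))$ against integrality.
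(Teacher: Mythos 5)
Your treatment of (a) and (b) is correct and is exactly the paper's argument: evaluate the relation coming from $T_{(2^2)}$ at $\rho_\chi$, and for (b) observe that $n(n-1)/2=\rho_\chi((2))^{-2}$ is a positive integer equal to the square of a rational number, hence a perfect square. The genuine gap is in (c) and (d), and it is concrete. The paper's mechanism is to solve the same relation for the one surviving ratio and then multiply by the \emph{exact} class size, using $\omega_\chi(\lambda)=|\lambda^{S_n}|\,\chi(\lambda)/\chi(1)\in\mathbb{Z}$: from $\chi((2))=\chi((2^2))=0$ one gets $\rho_\chi((3))=-1/(2(n-2))$, hence $\omega_\chi((3))=\frac{n(n-1)(n-2)}{3}\bigl(-\frac{1}{2(n-2)}\bigr)=-\frac{n(n-1)}{6}\in\mathbb{Z}$, which gives the mod-$3$ condition; from $\chi((2))=\chi((3))=0$ one gets $\rho_\chi((2^2))=-2/((n-2)(n-3))$, hence $\omega_\chi((2^2))=\frac{n(n-1)(n-2)(n-3)}{8}\bigl(-\frac{2}{(n-2)(n-3)}\bigr)=-\frac{n(n-1)}{4}\in\mathbb{Z}$, which gives the mod-$4$ condition. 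Your proposal replaces these by integrality of $n(n-1)(n-2)\rho((3))$ and $n(n-1)(n-2)(n-3)\rho((2^2))$, which are $3$ and $8$ times the class sizes; with those weaker facts every conclusion is vacuous ($-n(n-1)/2\in\mathbb{Z}$ and $\pm 2n(n-1)\in\mathbb{Z}$ hold for every $n$), so no congruence follows. The entire arithmetic content sits in the denominators $3$ and $8$ of the class sizes, and your scalings lose it. Relatedly, the computation you discard as ``the wrong track'' for (c) --- $\rho_\chi((2^2))(n-2)(n-3)=\pm 2$ --- is in fact the right track; your misstep was reading it as ``$(n-2)(n-3)$ divides $2$'' (treating $\rho_\chi((2^2))$ as an integer) rather than multiplying by $|(2^2)^{S_n}|$. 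The substitute you offer, an unspecified ``counting argument on the $3$-core/$3$-quotient,'' is not a proof.

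There is a second problem, which explains why your sketch for (c) had to drift into unsupported claims: the pairing of hypotheses with moduli that you are trying to prove (the one given by the statement's labels) is false, and the proposition as printed has the conclusions of (c) and (d) interchanged relative to what is provable --- the paper's own proof derives the mod-$3$ condition from $\chi((2))=\chi((2^2))=0$ and the mod-$4$ condition from $\chi((2))=\chi((3))=0$. Indeed, $\chi_{(3,1,1)}\in\Irr(S_5)$, the exterior square of the standard character, satisfies $\chi((2))=\chi((3))=0$ while $5\equiv 2\pmod 3$ (and $5\equiv 1\pmod 4$, as the corrected pairing predicts), so the literal statement (c) cannot be proved. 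The repair of your proposal is therefore: keep your relation, solve for the surviving ratio in each case, multiply by the true class size, and attach the moduli the other way around; no Murnaghan--Nakayama contents, cores or quotients are needed.
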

\begin{proof}
By \autoref{general_poly_rel_thm}, for every $n\geq 3$ and every $\chi\in\Irr(S_n)$, choosing $t_{\sigma}=\rho_\chi(\sigma)=\chi(\sigma)/\chi(1)$ for each $\sigma\in S_n$ gives a solution of the polynomial 
$$T_{(2^2)} = t_{(2^2)} + \frac{4t_{(3)}}{N - 3} + \frac{-N(N-1)t_{(2)}^2}{(N-2)(N-3)} + \frac{2}{(N-2)(N-3)}$$ defined using \autoref{char_from_cycles}. If $\chi((3))=\chi((2^2))=0$, then we get $n(n-1)\rho((2))^2=2,$ and $\rho((2))$ is a rational number, so $n(n-1)/2$ is a square and $\chi((2))\neq 0$.

Recall that for any element $g$ of a finite group $G$ and any $\varphi\in \Irr(G)$, we have $|g^G|\varphi(g)/\varphi(1) \in \mathbb{Z}$. If $\chi((2))=\chi((2^2))=0$, then we get $\frac{4\rho((3))}{n-3} = -\frac{2}{(n-2)(n-3)}$, so $$\frac{|(3)|\chi((3))}{\chi(1)} = \frac{n(n-1)(n-2)}{3}\rho((3)) = \frac{n(n-1)}{6}\in \mathbb{Z},$$ so $n\equiv 0$ or $1$ mod $3$. Similarly, if $\chi((2))=\chi((3))=0$, then we get $\rho((2^2)) = \frac{2}{(n-2)(n-3)}$, so $$\frac{|(2^2)|\chi(2^2)}{\chi(1)} = \frac{n(n-1)(n-2)(n-3)}{8}\rho((2^2)) = \frac{n(n-1)}{4}\in \mathbb{Z}.$$ Therefore $n\equiv 0$ or $1$ mod $4$.
\end{proof}
Note that case (b) in the above theorem happens quite rarely; the first few $n$'s such that $n(n-1)/2$ is a square are $1, 2, 9, 50, 289, 1682, \dots$, which appears as the sequence A055997 on \cite{OEIS}. These numbers take the form $(2 + (3+2\sqrt{2})^{k-1} + (3-2\sqrt{2})^{k-1})/4$ for positive integers $k$. $S_n$ for $n=9$ and $50$ actually have such characters, for example those corresponding to the partitions $(5,2^2)$ and $(14, 6^4, 5, 3^2, 1)$ vanishes at both $(2^2)$ and $(3)$. I do not know whether $S_n$'s for larger such $n$'s always have such irreducible characters.

To handle more complicated polynomial relations, we use Siegel's theorem on integral points of algebraic curves:
\begin{thm*}[\upshape Siegel \cite{Siegel}]
If an affine algebraic curve defined over a number field has nonzero genus or has more than two points at infinity, then there are only finitely many integral points on it.
\end{thm*}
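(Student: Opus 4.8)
The statement is a classical theorem, so rather than reprove it I will sketch the standard modern route, due in this form to Siegel, Mahler, and Lang, which combines the \emph{Mordell--Weil theorem} with \emph{Roth's theorem} on Diophantine approximation. Let $C$ be the affine curve with smooth projective model $\tilde{C}$ over a number field $K$, and let $S=\tilde{C}\setminus C$ be the finite set of points at infinity; after enlarging $K$ I may assume every point of $S$ is $K$-rational. The key reformulation is that an $S$-integral point of $C$ is a $K$-rational point of $\tilde{C}$ whose $v$-adic distance to $S$ is bounded away from $0$ at all places outside a fixed finite set, so that its Weil height is, up to $O(1)$, concentrated in the proximity $\sum_{Q\in S}\sum_{v}\lambda_v(P,Q)$ to the points at infinity. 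First I would split into cases by the genus $g$ of $\tilde{C}$: it suffices to prove finiteness when $g\geq 1$ (with any $|S|\geq 1$) and when $g=0$ with $|S|\geq 3$, since the complementary cases are exactly those excluded in the statement.

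The genus-$0$ case with $|S|\geq 3$ reduces to the $S$-unit equation. Choosing a coordinate identifying $\tilde{C}$ with $\mathbb{P}^1$ and sending three of the points at infinity to $0,1,\infty$, an $S$-integral point becomes an element $x\in K$ with both $x$ and $1-x$ being $S$-units; writing $u=x$ and $v=1-x$ gives $u+v=1$ with $u,v$ $S$-units. Finiteness of the solutions of this equation follows from Roth's theorem (in its several-places form), so this case is complete; removing more than three points only shrinks the solution set.

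For $g\geq 1$ I would use embedding and descent. Fixing a base point, the Abel--Jacobi map $j:\tilde{C}\to J$ sends $\tilde{C}$ into its Jacobian $J$, and the Mordell--Weil theorem says $J(K)$ is finitely generated, so $J(K)/mJ(K)$ is finite for each integer $m\geq 2$. If $C$ had infinitely many integral points $P_1,P_2,\dots$, then infinitely many $j(P_i)$ lie in a single coset of $mJ(K)$; pulling $\tilde{C}$ back along the multiplication-by-$m$ isogeny $[m]:J\to J$ yields an unramified cover of $\tilde{C}$ to which these points lift. The decisive point is the behaviour of heights: the N\'eron--Tate height is a quadratic form with $\hat{h}(mR)=m^{2}\hat{h}(R)$, so the lifted points have height smaller by a factor $m^{2}$, and by choosing $m$ large one can make this amplification as strong as desired.

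The plan is then to derive a contradiction from the following approximation estimate, which is where Roth's theorem does the real work: for any fixed $Q\in\tilde{C}(\bar{K})$, any place $v$, and any infinite sequence of distinct $P_i\in\tilde{C}(K)$ of growing height, the local proximity satisfies $\lambda_v(P_i,Q)=o(h(P_i))$. The covering construction is what lets one feed the $m^{2}$ height gain into Roth's subquadratic exponent and obtain this $o(h)$ bound. Summing over the finitely many pairs $(Q,v)$ with $Q\in S$ then forces $\sum_{Q\in S}\sum_v\lambda_v(P_i,Q)=o(h(P_i))$; but integrality of the $P_i$ makes this same sum equal to $h(P_i)+O(1)$, giving $h(P_i)=o(h(P_i))$, which is absurd once $i$ is large. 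I expect the main obstacle to be exactly this last synthesis: transporting Roth's theorem, a statement about approximating a single algebraic number on $\mathbb{P}^1$, to an approximation statement for points on a higher-genus curve inside its Jacobian, and verifying that the height amplification under $[m]$ genuinely pushes the effective approximation exponent past the Roth threshold $2$, which is what quantitatively forces the sequence of integral points to be finite.
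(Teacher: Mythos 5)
The paper does not prove this statement at all: it is quoted as a classical theorem with the citation \cite{Siegel} standing in for the proof, and it is then used as a black box (e.g.\ in \autoref{4-trans-forbidden} and throughout the proof of \autoref{Zn>3}) to conclude that the auxiliary affine curves produced by the Gr\"obner basis computations have only finitely many integral points. So there is no internal argument to compare yours against, and the only question is whether your outline is a faithful account of the known proof. It is: the reduction of the genus-$0$, at-least-three-points-at-infinity case to the $S$-unit equation $u+v=1$ settled by the several-places (Mahler/Ridout) form of Roth's theorem, and the genus $\geq 1$ case via the Abel--Jacobi embedding, Mordell--Weil plus pigeonhole into cosets of $mJ(K)$, lifting along the unramified cover pulled back from multiplication by $m$, and playing the $m^{2}$ N\'eron--Tate height amplification against Roth's exponent $2$ to obtain $\lambda_v(P_i,Q)=o(h(P_i))$ --- contradicting the fact that for integral points the height is concentrated in the proximity to the divisor at infinity --- is precisely the standard modern treatment (as in Hindry--Silverman or Bombieri--Gubler). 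Two caveats keep this an outline rather than a proof: the step you yourself flag as the main obstacle (transporting Roth's theorem to the curve through the cover, including that unramifiedness preserves local distances up to $O(1)$ and that the lifted points stay over a fixed number field) is described but not executed, and Mordell--Weil and Roth are themselves imported wholesale. For the purposes of this paper that is entirely appropriate --- the theorem is cited, not proved, and a self-contained proof would be a monograph-length detour --- and your sketch correctly identifies both the case division implicit in the statement's hypotheses and where the genuine difficulty lives.
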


Another fact we can utilize is that for any class $\lambda$ of $S_n$ and any $\chi\in\Irr(S_n)$, the number $$\omega_\chi(\lambda) := \frac{|\lambda|\chi(\lambda)}{\chi(1)}$$ is an algebraic integer; since the values of $\chi\in\Irr(S_n)$ are (rational) integers, $\omega_\chi(\lambda)\in \mathbb{Z}$. 

\begin{prop}\label{4-trans-forbidden}
There exists some integer $M>0$ such that if $n>M$ and $\chi\in\Irr(S_n)$ satisfies $\chi((5))=\chi((3^2))=0$, then $\chi((2)),\chi((2^2)),\chi((3))$ are nonzero.
\end{prop}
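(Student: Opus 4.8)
The plan is to impose the two hypotheses $\rho_\chi((5))=0$ and $\rho_\chi((3^2))=0$ on the relations collected in \autoref{example_trans_4} and to extract two scalar identities among the surviving ratios. Writing $\rho=\rho_\chi$ and recalling that $\rho$ satisfies every relation $T_\lambda=0$ of \autoref{char_from_cycles}, substituting $t_{(5)}=0$ and $t_{(3^2)}=0$ into $T_{(3^2)}$ and clearing denominators gives
\begin{equation}\label{plan_star}
-n(n-1)(n-2)^2\rho((3))^2 + (9n-60)(n-2)\rho((3)) + 9n(n-1)\rho((2))^2 + 3(n-8) = 0,
\end{equation}
while substituting $t_{(2^2)}=0$ into $T_{(2^2)}$ gives
\begin{equation}\label{plan_starstar}
4(n-2)\rho((3)) - n(n-1)\rho((2))^2 + 2 = 0.
\end{equation}
The strategy is to treat the three desired nonvanishings as three separate cases: I assume, for contradiction, that one of $\rho((2)),\rho((2^2)),\rho((3))$ is zero, and show that this is either impossible outright or forces $n$ into a finite exceptional set. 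Taking $M$ to be the largest exceptional $n$ arising across the three cases then proves the statement.

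The case $\rho((3))=0$ is elementary: \eqref{plan_star} collapses to $9n(n-1)\rho((2))^2 = -3(n-8)$, whose left-hand side is nonnegative since $\rho((2))\in\mathbb{Q}$, so $n\le 8$. For the case $\rho((2))=0$, I would introduce $w:=\omega_\chi((3))=\tfrac{n(n-1)(n-2)}{3}\rho((3))$, which lies in $\mathbb{Z}$ by the integrality of $\omega_\chi$ noted above. Rewriting \eqref{plan_star} with $\rho((2))=0$ in terms of $w$ turns it into the integral quadratic
\begin{equation}\label{plan_B}
3w^2 - (9n-60)w - n(n-1)(n-8) = 0,
\end{equation}
which has a rational (hence integral) root $w$ only if its discriminant $(9n-60)^2 + 12\,n(n-1)(n-8) = 12n^3-27n^2-984n+3600$ is a perfect square. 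Thus any such $n$ yields an integral point $(n,y)$ on the curve $y^2 = 12n^3-27n^2-984n+3600$, and Siegel's theorem bounds the number of admissible $n$.

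For the remaining case $\rho((2^2))=0$, I would use \eqref{plan_starstar} to substitute $n(n-1)\rho((2))^2 = 4(n-2)\rho((3)) + 2$ into \eqref{plan_star}, eliminating $\rho((2))$ altogether; after dividing through by $n-2$ this leaves $n(n-1)(n-2)\rho((3))^2 - 3(3n-8)\rho((3)) - 3 = 0$, which in the variable $w$ becomes
\begin{equation}\label{plan_C}
3w^2 - 3(3n-8)w - n(n-1)(n-2) = 0.
\end{equation}
Exactly as before, solvability forces $12n^3+45n^2-408n+576$ to be a perfect square, an elliptic curve to which Siegel's theorem again applies. Setting $M$ to be the maximum of $8$ and the finitely many exceptional $n$ coming from the two Siegel arguments completes the proof.

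The step I expect to demand the most care is verifying that the two cubics $12n^3-27n^2-984n+3600$ and $12n^3+45n^2-408n+576$ have no repeated roots, so that the associated curves genuinely have genus $1$ and Siegel's theorem is applicable; should one of these cubics acquire a double root, the corresponding curve would be rational and could carry infinitely many integral points (the Pell-type phenomenon already visible behind \autoref{2-3-22}(b)), and an entirely different argument would be required. One should also keep in mind that Siegel's theorem is ineffective, so this method establishes the existence of $M$ without producing it; an explicit $M$ would require determining all integral points on the two elliptic curves directly.
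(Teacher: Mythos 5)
Your proposal is correct, and its skeleton matches the paper's proof: the same two relations (your two displayed identities are, up to clearing denominators, exactly the two polynomials the paper extracts from its reduced Gr\"obner basis — substituting $t_{(5)}=t_{(3^2)}=0$ into $T_{(3^2)}$ and $t_{(2^2)}=0$ into $T_{(2^2)}$ from \autoref{example_trans_4} recovers them, since those variables occur linearly), the same three-way case split, the same positivity argument giving $n\leq 8$ when $\rho_\chi((3))=0$, and Siegel's theorem for the two remaining cases. Where you genuinely diverge is the case $\rho_\chi((2^2))=0$: the paper uses the $T_{(2^2)}$ relation to eliminate $t_{(3)}$, arriving at the quartic
$$t_{(2)}^4 + \frac{-4n^2 - 32n + 96}{n^2(n-1)^2}t_{(2)}^2 + \frac{4(n^2 + 5n - 24)}{n^3(n-1)^3}=0$$
and hence at integral points $(n,\omega_\chi((2)))$ on a curve of genus $3$, whereas you eliminate $\rho_\chi((2))^2$ instead, getting a quadratic in $\rho_\chi((3))$ and, via the discriminant, integral points on the genus-one Weierstrass curve $y^2=12n^3+45n^2-408n+576$. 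The two constraints are equivalent (substituting your expression for $\rho_\chi((3))$ back into your quadratic reproduces the paper's quartic), but your formulation buys two things: the two Siegel cases are handled uniformly, as ``the discriminant of an integral quadratic in $\omega_\chi((3))$ must be a perfect square,'' and the genus claim reduces to the elementary fact that $y^2=f(x)$ with $f$ a squarefree cubic has genus one, rather than a genus count for a plane quartic. The verification you flag as the delicate step does go through: both cubics $12n^3-27n^2-984n+3600$ and $12n^3+45n^2-408n+576$ are squarefree (each is coprime to its derivative; equivalently, their discriminants are nonzero), so both curves have genus one and Siegel's theorem applies. Your closing remark on ineffectivity is also consistent with the paper, which likewise asserts only the existence of $M$ without computing it.
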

\begin{proof}
As in the proof of \autoref{forcing_2}, consider the ideal generated by the polynomials $T_{\lambda}$ for all $\lambda$ with $\Vert\lambda\Vert \leq 4$ which is not a single cycle, together with $t_{(5)}$ and $t_{(3^2)}$. The reduced Gr\"obner basis of this ideal includes the following polynomials:
\begin{align}
&t_{(2^2)} + \frac{4}{N - 3}t_{(3)} + \frac{-N(N-1)}{(N-2)(N-3)}t_{(2)}^2 + \frac{2}{(N-2)(N-3)}\label{x22_when_x5=x33=0}\\
&t_{(3)}^2 + \frac{-9N + 60}{N(N-1)(N-2)}t_{(3)} - \frac{9}{(N-2)^2}t_{(2)}^2 + \frac{-3(N-8)}{N(N-1)(N-2)^2}\label{x3_when_x5=x33=0}
\end{align}
Again, the values of $\rho_\chi$ form a solution of both \eqref{x22_when_x5=x33=0} and \eqref{x3_when_x5=x33=0}. 

First assume $\chi((3))=0$. Then \eqref{x3_when_x5=x33=0} with $N=n$, $t_{(3)}=\rho_\chi((3))=0$ and $t_{(2)}=\rho_\chi((2))$ becomes $$\frac{9}{(n-2)^2}\rho((2))^2 + \frac{3(n-8)}{n(n-1)(n-2)^2}=0.$$ Since $\rho((2))^2\geq 0$, this equality cannot hold unless $n\leq 8$.

Next, assume $\chi((2))=0$. Then \eqref{x3_when_x5=x33=0} becomes $$\rho((3))^2 + \frac{-9n + 60}{n(n-1)(n-2)}\rho((3)) + \frac{-3(n-8)}{n(n-1)(n-2)^2}=0.$$ Note that $\omega((3)) := |(3)|\rho((3)) = n(n-1)(n-2)\rho((3))/3$ is an integer. Hence, the pair $(x,y)=(n,\omega((3)))$ is an integer solution of the polynomial \begin{equation}\label{x2=x5=x33=0}y^2 + (-3x + 20)y - \frac{x(x-1)(x-8)}{3}=0.\end{equation} The algebraic curve defined by \eqref{x2=x5=x33=0} has genus $1$. By Siegel's Theorem, \eqref{x2=x5=x33=0} has only finitely many integer solutions, so there exists an upper bound for $n$.

Finally, assume $\chi((2^2))=0$. Then from \eqref{x22_when_x5=x33=0} we get $$\frac{4}{n - 3}t_{(3)} = \frac{n(n-1)}{(n-2)(n-3)}t_{(2)}^2 - \frac{2}{(n-2)(n-3)}.$$ Using this, we can rewrite \eqref{x3_when_x5=x33=0} as $$t_{(2)}^4 + \frac{-4n^2 - 32n + 96}{n^2(n-1)^2}t_{(2)}^2 + \frac{4(n^2 + 5n - 24)}{n^3(n-1)^3}=0.$$ As in the previous case, we can see that $(n, \omega((2)))$ is an integer point of an algebraic curve of genus $3$, so there exists an upper bound for $n$.
\end{proof}

\begin{prop}\label{2-5-42-odd}
Let $n\geq 5$. If $\chi\in\Irr(S_n)$ vanishes at $(2), (5), (4,2)$, and at least one (hence all) of $(4),(3,2),(2^3)$ is $0$, then $n\in \{7, 15, 25\}$. 
\end{prop}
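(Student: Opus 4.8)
The plan is to convert the six vanishing hypotheses, via the explicit degree-two relations of \autoref{example_trans_4}, into a rational formula for $\rho((3))$ in terms of $n$, and then to use integrality of the normalized class sums $\omega_\chi$ to cut the possibilities down to a finite list. We may assume $n\ge 6$, so that every class named in the statement exists. Since $(2)$ is the unique class with $\Vert\cdot\Vert=1$ and $(4),(3,2),(2^3)$ are exactly the classes with $\Vert\cdot\Vert=3$, \autoref{odd_zeros} with $k=1$ shows that $\chi((2))=0$ forces the three classes of $\Vert\cdot\Vert=3$ to vanish all at once or not at all; the hypothesis that one of them vanishes therefore yields the ``hence all'' and gives $\rho((4))=\rho((3,2))=\rho((2^3))=0$.

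Next I would substitute $t_{(2)}=t_{(4)}=t_{(5)}=t_{(3,2)}=t_{(2^3)}=t_{(4,2)}=0$ into the polynomials of \autoref{example_trans_4} (equivalently, compute the reduced Gr\"obner basis of the ideal they generate together with these monomials, exactly as in \autoref{forcing_2} and \autoref{4-trans-forbidden}). Under this substitution $T_{(4,2)}$ reduces to a linear equation that determines $t_{(3)}$, namely
\[ \rho((3))=\frac{1}{(n-2)(n-5)}, \]
and then $T_{(2^2)}$ yields $\rho((2^2))=-\dfrac{2}{(n-2)(n-5)}$. The relations $T_{(3,2)}$ and $T_{(2^3)}$ become trivial, while $T_{(3^2)},T_{(3,2^2)},T_{(2^4)}$ only evaluate the corresponding ratios; so at the level $\Vert\cdot\Vert\le 4$ there is no constraint on $(n,\rho((3)))$ beyond the displayed one.

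I would then impose that $\omega_\chi(\lambda)\in\mathbb{Z}$. From $\omega_\chi((3))=\frac{n(n-1)}{3(n-5)}$ and $n(n-1)\equiv 20\pmod{n-5}$ we obtain $(n-5)\mid 20$, hence $n\in\{6,7,9,10,15,25\}$. Imposing in addition $\omega_\chi((2^2))=-\frac{n(n-1)(n-3)}{4(n-5)}\in\mathbb{Z}$ removes $n=6$ and $n=10$ (the values are $-22.5$ and $-31.5$), leaving $\{7,9,15,25\}$. Note that here no appeal to Siegel's theorem is needed: the relation for $\rho((3))$ already reduces $n$ to a finite list by elementary divisibility.

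The main obstacle is eliminating $n=9$, which passes every test the polynomial relations can produce: one checks that $\omega_\chi$ is an integer at $(3),(2^2),(3^2),(3,2^2),(2^4)$ for $n=9$, and the relations with $\Vert\cdot\Vert\ge 5$ introduce the still-unconstrained cycle ratios $\rho((6)),\rho((7)),\dots$; \autoref{odd_zeros} only gives a dichotomy for the $\Vert\cdot\Vert=5$ classes and forces no actual vanishing, so it supplies no new equation in $n$. Hence $n=9$ must be ruled out by a finite, representation-theoretic check. I would argue as follows: $\rho((3))=1/28$ forces $28\mid\chi(1)$, while $\chi((2))=0$ is equivalent to the vanishing of the content sum of the underlying partition, i.e. to its being self-conjugate; the only self-conjugate partitions of $9$ are $(3^3)$ and $(5,1^4)$, of degrees $42$ and $70$, and neither is divisible by $28$. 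Thus $S_9$ has no character satisfying the hypotheses, and we conclude $n\in\{7,15,25\}$. (A direct Murnaghan--Nakayama computation shows that the self-conjugate partition $(4,1^3)$ realizes all the required vanishing for $n=7$, so the conclusion is sharp, although only the stated implication is needed.)
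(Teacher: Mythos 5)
Your proposal is correct in its core reduction and follows the same route as the paper up to the finite list: the paper's proof likewise extracts $\rho_\chi((3))=1/((n-2)(n-5))$ from the Gr\"obner basis of the relations in \autoref{example_trans_4} (your direct substitution into $T_{(4,2)}$, with $t_{(2)}=t_{(4)}=t_{(5)}=t_{(4,2)}=0$, is exactly what that computation amounts to, and your values check out, including $\rho((2^2))=-2/((n-2)(n-5))$ from $T_{(2^2)}$), then uses $\omega_\chi((3))=n(n-1)/(3(n-5))\in\mathbb{Z}$ to force $(n-5)\mid 20$, i.e.\ $n\in\{6,7,9,10,15,25\}$. Where you genuinely diverge is the endgame: the paper disposes of $6,9,10$ with the single sentence ``a computer search shows,'' whereas you eliminate $6$ and $10$ by the additional integrality $\omega_\chi((2^2))=-n(n-1)(n-3)/(4(n-5))$ (values $-22.5$ and $-31.5$, correct) and $9$ by a degree-divisibility argument. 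This buys a fully human-checkable proof; the paper's version buys brevity at the cost of an unexplained computation. Your observation that Siegel's theorem is never needed here is also accurate and matches the paper.

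One justification in your $n=9$ step is, however, wrong as stated: $\chi_\lambda((2))=0$ is indeed equivalent to the vanishing of the content sum $\sum_{(i,j)\in\lambda}(j-i)$, but content sum zero is \emph{not} equivalent to self-conjugacy in general. Self-conjugate implies content sum zero, and the converse fails: for $\lambda=(5,5,3,1,1,1)\vdash 16$ one has $\sum_i\binom{\lambda_i}{2}=23=\sum_j\binom{\lambda'_j}{2}$, so the content sum is $0$, yet $\lambda'=(6,3,3,2,2)\neq\lambda$. Your conclusion for $n=9$ survives only because, by direct enumeration, the partitions of $9$ with content sum zero happen to be exactly the two self-conjugate ones, $(3^3)$ and $(5,1^4)$; so replace the claimed equivalence with that enumeration (or a check of all $30$ partitions of $9$), and the step is airtight. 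The rest of the $n=9$ argument is fine: $\rho((3))=1/28$ forces $28\mid\chi(1)$ since $\chi((3))=\chi(1)/28\in\mathbb{Z}$, and $28$ divides neither $42$ nor $70$. Two minor remarks: your appeal to \autoref{odd_zeros} with $k=1$ for the ``hence all'' clause is exactly right (and is presumably what the paper's parenthetical rests on); and your blanket assumption $n\geq 6$ silently drops $n=5$, where $(4,2)$ and $(2^3)$ do not exist --- the paper glosses over this too, and no character of $S_5$ vanishes at both $(2)$ and $(5)$, but a one-line disclaimer would be cleaner. Finally, your sharpness remark is correct: $(4,1^3)\vdash 7$ is self-conjugate, so it vanishes on all odd classes, and having no rim hook of length $4$ or $5$ it vanishes at $(4,2)$ and $(5)$ as well.
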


\begin{proof}
The Gr\"obner basis computation as above shows that $\rho_\chi((3))=1/((n-2)(n-5))$. Note that $$\omega_\chi((3)) = |(3)|\rho_\chi((3)) = \frac{n(n-1)}{3(n-5)}\in \mathbb{Z}.$$ In particular, $n-5$ divides $n(n-1)$, so $n-5$ cannot have any prime factors other than $2$ and $5$. Moreover, $n-5$ cannot be divisible by $8$ or $25$. Therefore $n-5 = 2^a5^b$ for $a\in \{0,1,2\}$ and $b\in \{0,1\}$, so $n\in \{6, 7, 9, 10, 15, 25\}$. A computer search shows that among these values of $n$, only $7$, $15$ and $25$ have such $\chi$.
\end{proof}

\begin{prop}
There exists some integer $M>0$ such that if $n>M$, then there is no $\chi\in\Irr(S_n)$ vanishing at all of $(3,2),(3^2),(3,2^2),(2^4)$.
\end{prop}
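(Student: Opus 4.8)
The plan is to imitate the strategy already established in the proofs of \autoref{4-trans-forbidden} and \autoref{2-5-42-odd}: express the vanishing hypotheses as polynomial equations via \autoref{char_from_cycles}, eliminate variables to obtain a bivariate relation, and then invoke Siegel's theorem. Concretely, I would consider the ideal generated by the polynomials $T_\lambda$ from \autoref{example_trans_4} for all non-cycle $\lambda$ with $\Vert\lambda\Vert\leq 4$, together with the four monomials $t_{(3,2)}, t_{(3^2)}, t_{(3,2^2)}, t_{(2^4)}$ encoding the vanishing of $\chi$ at the given classes. Since $\rho_\chi$ is a common solution of all the $T_\lambda$, it lies on the variety of this ideal, so I would compute a reduced Gr\"obner basis with respect to the lexicographic order from \autoref{part_ordering} and extract an elimination polynomial involving only $N$ and one small variable, presumably $t_{(2)}$ or $t_{(3)}$.

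Once that elimination polynomial is in hand, I would substitute $N=n$ and clear denominators using the fact that $\omega_\chi(\lambda)=|\lambda|\rho_\chi(\lambda)$ is an integer for the relevant small class. This yields an integral point $(n,\omega_\chi((3)))$ (or $(n,\omega_\chi((2)))$) on an explicit affine plane curve. The crucial quantitative step is to verify that this curve has positive genus, or more than two points at infinity; granting that, Siegel's theorem immediately bounds $n$, giving the desired $M$. I would compute the genus directly from the defining equation, as was done for the genus-$1$ curve \eqref{x2=x5=x33=0} and the genus-$3$ curve in \autoref{4-trans-forbidden}.

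The main obstacle I anticipate is the possibility that the four vanishing conditions, together with the single relation $T_{(2^4)}$ available at this transposition level, do not suffice to eliminate down to a genuinely bivariate relation with a single nontrivial curve; the system might instead be underdetermined or, worse, force some equations to become redundant so that the elimination ideal contains only $t_{(2)}$ times a factor that vanishes identically for some $n$. If the naive elimination collapses, I would need to bring in additional relations $F_{x,y}$ beyond those recorded in \autoref{example_trans_4}, or exploit the extra integrality constraints coming from $\omega_\chi$ at several classes simultaneously, to pin down a curve of positive genus. A secondary technical point is confirming that the curve is irreducible and that its genus computation is not spoiled by extraneous rational components, since only the high-genus component is needed to apply Siegel; any rational or conic components would have to be handled separately by the elementary divisibility arguments of the type used in \autoref{2-5-42-odd}.
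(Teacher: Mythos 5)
Your proposal takes essentially the same route as the paper: one forms exactly the ideal you describe, the reduced Gr\"obner basis yields an elimination polynomial in $t_{(2)}$ alone which factors over $\mathbb{Q}(n)$ as a quadratic times a quartic, and the quartic factor gives a genus-$3$ curve in $(n,\omega_\chi((2)))$ that is dispatched by Siegel's theorem, just as you outline. The only nuance is that what you flag as a ``secondary technical point'' is in fact half of the paper's proof: the quadratic factor is a conic, to which Siegel does not apply, and the paper rules it out for $n>12$ by precisely the kind of elementary fallback you propose --- integrality of $\omega_\chi((2))$ forces $2(n-10)(11n-135)$ to be a perfect square divisible by $16$, which is impossible by gcd and mod-$8$ quadratic-residue considerations.
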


\begin{proof}
Suppose that $\chi\in\Irr(S_n)$ satisfies $\chi((3,2))=\chi((3^2))=\chi((3,2^2))=\chi((2^4))=0$. Then a Gr\"obner basis computation as above shows that $\rho_\chi((2))$ must be a solution of the following polynomial: 
\begin{align*}
\left(t_{(2)}^2 + \frac{-11n^2 + 245n - 1350}{2n^2(n-1)^2}\right)\left(t_{(2)}^4 + \frac{-12n^2 + 108n - 304}{n^2(n-1)^2}t_{(2)}^2 + \frac{12n^2 - 172n + 560}{n^3(n-1)^3}\right)
\end{align*}

First suppose that $\rho_\chi((2))$ is a solution of the first factor. Then $$\frac{11n^2 - 245n + 1350}{8} = \frac{11n^2 - 245n + 1350}{2n^2(n-1)^2}\left(\frac{n(n-1)}{2}\right)^2 = \left(\frac{n(n-1)}{2}\right)^2\rho_\chi((2))^2=\omega((2))^2$$ is an integer, since $\omega((2)) = |(2)|\rho_\chi((2))=n(n-1)\rho_\chi((2))/2\in \mathbb{Z}$. Therefore $22n^2-490n+2700=2(n-10)(11n-135)$ is a square divisible by $16$. Assume that $n>12$. Note that $\gcd(n-10, 11n-135)$ divides $25$. It follows that $n-10$ must be a square times $1$ or $5$, and similarly $11n-135$ must be a square time $1$ or $5$. We have the following cases: \\
$\bullet$ If $n$ is odd and $n-10$ is a square, then $n-10\equiv 1$ mod $8$ and $11n-135=11(n-10)-25\equiv 2$ mod $8$, so $2(n-10)(11n-135)$ is not divisible by $16$, which is a contradiction. \\
$\bullet$ If $n$ is odd and $n-10$ is $5$ times a square, then $n-10\equiv 5$ mod $8$ and $11n-135\equiv 6$ mod $8$, so $2(n-10)(11n-135)$ is not divisible by $16$, which is a contradiction.\\
$\bullet$ if $n$ is even, then $n-10\equiv 0$ mod $8$, so $11n-135 \equiv 7$ mod $8$; however, since $11n-135$ is a square times $1$ or $5$, $11n-135\equiv 1$ or $5$ mod $8$, so we get a contradiction.\\
Therefore $n\leq 12$. We can easily check the remaining cases to see that this only happens when $n=10$.

Next, suppose that $\rho_\chi((2))$ is a solution of the second factor. By arguing as in the proof of \autoref{4-trans-forbidden}, we get an algebraic curve of genus $3$. By Siegel's theorem, there are only finitely many integer points $(n,\omega_\chi((2)))$ on that curve, so $n\leq M$ for some large integer $M$.
\end{proof}

Gr\"obner basis computations followed by the methods we have been using so far, namely Siegel's theorem, modular arithmetic, quadratic residues, and the fact that $\omega_\chi(\lambda) = \frac{|\lambda|\chi(\lambda)}{\chi(1)}$ is an integer for all $\chi\in\Irr(S_n)$, can be applied to find many other ``forbidden'' sets of zeros. If we restrict ourselves to the $11$ nonidentity classes which can be obtained as a composition of at most $4$ transpositions, we have $330$ subsets of size $4$. Among these, the subsets on which an irreducible character can vanish are the following.

\begin{thm}\label{4_trans-allowed}
Let $C\subseteq \{\lambda\mid 0<\Vert\lambda\Vert\leq 4\}$ be a set of classes with $|C|\geq 4$. Suppose that there exist infinitely many $n$'s with a character $\chi\in\Irr(S_n)$ vanishing on $C$. Then at least one of the following holds for $C$ and all such $n, \chi$:\\
{\upshape(i)} $|\{(2),(3),(2^2)\}\cap C|= 2$, so that $n$ satisfies one of the conditions in \autoref{2-3-22}. \\
{\upshape(ii)} One of the following is a square of an integer: $(6n^2 + 74n - 600)/4,\ (38n^2 + 34n - 192)/28,\ 8n - 45,\ 8n-15,\ (6n^2 - 30n + 40)/4,\  (6n^2 - 70n + 120)/4,\ (11n^2 - 245n + 1350)/8,\ (2n^2 + 22n - 48)/4,\ (6n^2 + 2n - 24)/4,\ (n^2 - n)/2 $.\\
{\upshape(iii)} $\rho_\chi((3)) = (n^2 - 25n + 60)/(2n(n-1)(n-2))$; in this case $n\equiv 0$ or $1$ mod $3$.\\
{\upshape(iv)} $\rho_\chi((3)) = -(n^2 - 33n + 140)/(2n(n-1)(n-2))$; in this case $n\equiv 1$ or $2$ mod $3$ and $C=\{(2),(4),(3,2),(2^3),(4,2),(2^4)\}$.\\
{\upshape(v)} $\rho_\chi((3))= 12(n-5)/(n(n-1)(n-2))$; in this case $C=\{(2),(4),(3,2),(2^3),(4,2),(3,2^2)\}$, $\rho_\chi((2^2)) = -(2n^2 +46n-240)/(n(n-1)(n-2)(n-3)$, and $n\equiv 0$ or $1$ mod $4$ .\\
{\upshape(vi)} $\rho_\chi((3)) = -(9n^2 - 129n + 420)/(4n(n-1)(n-2))$; in this case $n\equiv 0$ or $1$ mod $4$.\\
{\upshape(vii)} $\{(2),(4),(3,2),(2^3)\}\subseteq C$ and $C\cap \{(5),(4,2),(3^2),(3,2^2),(2^4)\}\leq 1$.\\
{\upshape(viii)} $C=\{(3),(3,2),(3^2),(3,2^2)\}$.\\
{\upshape(ix)} $C=\{(2^2),(4),(4,2),(3,2^2)\}$.
\end{thm}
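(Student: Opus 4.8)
\textbf{The plan.} The strategy is to reduce the entire classification to a finite family of Gr\"obner basis computations, one for each candidate set $C$, followed by a Diophantine analysis of the resulting bivariate relations, exactly in the spirit of \autoref{forcing_2}, \autoref{4-trans-forbidden} and \autoref{2-5-42-odd}. For each $C$, I would form the ideal generated by the seven non-cycle polynomials $T_\lambda$ with $\Vert\lambda\Vert\le 4$ listed in \autoref{example_trans_4}, together with the monomials $\{t_\mu \mid \mu\in C\}$, and compute its reduced Gr\"obner basis with respect to the lexicographic order coming from \autoref{part_ordering}. Any $\chi\in\Irr(S_n)$ vanishing on $C$ gives a rational point of this variety via $t_{(k)}=\rho_\chi((k))$ and $N=n$, so every element of the basis yields a valid identity for such $\chi$. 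A key structural simplification is monotonicity: vanishing on a larger set is a strictly stronger condition, so the family of admissible $C$ (those supporting infinitely many $(n,\chi)$) is downward closed. It therefore suffices to settle the $\binom{11}{4}=330$ sets of size exactly $4$ and then read off the admissible sets of larger size by downward closure; the distinguished maximal admissible sets are precisely those named in (vii)--(ix).

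Next I would carry out the elimination. Since each $C$ of size $\ge 4$ imposes at least four vanishing conditions while there are only the four cycle-unknowns $t_{(2)},t_{(3)},t_{(4)},t_{(5)}$ and the parameter $N$, the Gr\"obner basis generically eliminates all but one cycle-value and produces either (a) a nonzero univariate polynomial in $N$, or (b) a single bivariate relation tying $n$ to one of $\rho_\chi((2)),\rho_\chi((3))$. In case (a) only finitely many $n$ survive, so $C$ is inadmissible. In case (b) I would clear denominators and replace the surviving cycle-value by the algebraic integer $\omega_\chi(\lambda)=|\lambda|\chi(\lambda)/\chi(1)\in\mathbb{Z}$, thereby producing an integral point $(n,\omega_\chi(\lambda))$ on an explicit affine plane curve, just as \eqref{x2=x5=x33=0} arose in \autoref{4-trans-forbidden}.

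The heart of the argument is then the geometry of these curves. Computing genus and points at infinity, whenever the genus is positive or there are more than two points at infinity, Siegel's theorem bounds $n$ and excludes the family; this is how the large majority of the $330$ cases die. The surviving curves are conics with two rational points at infinity, and these fall into the recorded patterns. When the relation has the shape $\omega_\chi((2))^2=(\text{quadratic in }n)$, admissibility is equivalent to the displayed quantity being a perfect square, which is exactly the list in (ii) (with $(n^2-n)/2$ recovering \autoref{2-3-22}(b)). When the basis instead pins $\rho_\chi((3))$ to an explicit rational function of $n$, the integrality of $\omega_\chi((3))=\tfrac{n(n-1)(n-2)}{3}\rho_\chi((3))$ forces the congruences mod $3$ or mod $4$ appearing in (iii)--(vi). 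Case (i) is literally the configuration of \autoref{2-3-22} (two of $(2),(3),(2^2)$ lying in $C$), and (vii)--(ix) are the downward-closure-maximal admissible sets not already covered.

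\textbf{The main obstacle.} The genus-$\ge 1$ families are dispatched uniformly by Siegel's theorem, so the real difficulty is the borderline conics. For each such conic one must decide whether it carries infinitely many admissible integer points -- a Pell-type analysis -- and carefully separate these from conics on which a secondary obstruction still forces finiteness: in several cases integrality of $\omega_\chi$ together with a quadratic-residue argument modulo $8$ (as in the $(3,2),(3^2),(3,2^2),(2^4)$ computation preceding this theorem) eliminates all but finitely many $n$ even though the curve has genus $0$. Extracting the exact square conditions of (ii) and the precise congruences of (iii)--(vi) from these conics, while correctly maintaining the downward-closure bookkeeping across all $330$ sets and checking the finitely many small $n$ by direct inspection of character tables (cf.\ \autoref{2-5-42-odd}), is the principal logistical and technical burden of the proof.
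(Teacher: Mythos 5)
Your toolkit matches the paper's (Gr\"obner bases per candidate set, then Siegel's theorem, integrality of $\omega_\chi$, and square/quadratic-residue conditions), but your reduction has a genuine gap. The ``downward closure'' step is used backwards: admissibility being downward closed means an inadmissible $4$-subset kills every superset, but it does \emph{not} allow you to ``read off'' which larger sets are admissible, or what they look like, from the size-$4$ data alone. A set all of whose $4$-subsets are admissible need not be admissible, and the conclusions of the theorem that pin down $C$ itself --- (i), (iv), (v), (vii), (viii), (ix) --- do not transfer from a $4$-subset to a superset. This is why the paper checks \emph{all} subsets of size at least $4$, not just the $330$ of size exactly $4$.

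The decisive omission is in your dichotomy for the elimination output (``univariate in $N$'' versus ``one bivariate relation''): you leave out the case where the reduced Gr\"obner basis contains a monomial $t_\lambda^k$ for some class $\lambda\notin C$. In that case every character vanishing on $C$ is forced to vanish at $\lambda$ as well (this is exactly the mechanism of \autoref{forcing_2}(i)--(k)), so one must enlarge $C$ by all such $\lambda$ and recompute the Gr\"obner basis for the enlarged set. This forced-vanishing-and-enlargement loop is precisely how the paper obtains cases (iv) and (v): those are six-element admissible configurations, \emph{not} contained in the family (vii) (each contains two classes from $\{(5),(4,2),(3^2),(3,2^2),(2^4)\}$), which arise as enlargements of smaller sets and whose own Gr\"obner bases then pin the value of $\rho_\chi((3))$. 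It also underlies the exact bound $\lvert C\cap\{(5),(4,2),(3^2),(3,2^2),(2^4)\}\rvert\le 1$ in (vii). Without this mechanism, your assertion that the maximal admissible sets are ``precisely those named in (vii)--(ix)'' is false (cases (iv) and (v) are maximal admissible configurations too), and the explicit rational values of $\rho_\chi((3))$ in (iv) and (v) cannot be produced by computations on size-$4$ sets alone.
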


\begin{proof}
Using the above methods and Magma, we checked all subsets of size at least $4$ of $\{(2), (3),(2^2),(4),(3,2),(2^3), (5),(4,2), (3,3),(3,2^2),(2^4)\}$. It turned out that each subset $C$ which is not listed above in (i), (vii), (viii) and (ix) satisfies at least one of the following conditions:
\begin{enumerate}[label=(\alph*)]
\item The Gr\"obner basis is $\{1\}$.
\item The Gr\"obner basis contains a polynomial over $\mathbb{Q}(n)$ in some single variable $t$ where each irreducible factor is either of the form $t^2 - f(n)/g(n)$ for some variable $t$ and some polynomials $f,g\in \mathbb{Q}[n]$ where the leading coefficient of $f$ is negative and $g$ is monic, or of the form $F(|t|t,n)$ for some polynomial $F\in \mathbb{Q}[t,n]$ which defines an affine curve of nonzero genus, where $|t|$ is the size of the conjugacy class corresponding to $t$, which is a polynomial in one variable $n$.
\item The Gr\"obner basis contains a polynomial over $\mathbb{Q}(n)$ in some single variable $t$ which has a factor of the form $t^2 - f(n)g(n)$ for the variable $t=t_{(2)}$ or $t_{(3)}$, where $f$ is one of the polynomials listed in (ii) and $g$ is a square in $\mathbb{Q}(n)$.
\item The Gr\"obner basis contains $t_{(3)} - (n^2-25n+60)/(2n(n-1)(n-2))$ or $t_{(3)} + (9n^2 - 129n+420)/(4n(n-1)(n-2))$.
\item The Gr\"obner basis contains some monomial of the form $t_{\lambda}^k$ for some $\lambda\notin C$. In this case, every irreducible character vanishing on $C$ also vanishes at $\lambda$.
\end{enumerate}
See \autoref{forbiddentable} for the list of subsets $C$ of size $4$ in cases (i), (a) and (b); there are $208$ such sets.

(a) and (b) imply that for only finitely many $n$'s, $S_n$ has an irreducible character vanishing on $D$ for any $D\supseteq C$. (c) implies (ii), and (d) implies (iii) or (vi). For (e), we can add all such $\lambda$'s to $C$ to get larger sets. Again using Magma, we checked that these sets satisfy either (a) or (b) except for those listed in (i), (iv), (v), and (vii). 
\end{proof}

\begin{crl}\label{4_trans-special}
In the situation of \autoref{4_trans-allowed}, if $n\equiv 2$ or $11$ mod $12$, $n$ is large enough, none of the numbers in (ii) is a square, and there exists $\chi\in \Irr(S_n)$ which vanishes on $C$, then $C$ is as described in (iv), (vii),(viii) and (ix).
\end{crl}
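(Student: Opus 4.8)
The plan is to deduce this directly from \autoref{4_trans-allowed} by using the two arithmetic hypotheses to eliminate the cases (i), (ii), (iii), (v), and (vi), leaving only (iv), (vii), (viii), and (ix). The first step is to record the consequences of the congruence condition on $n$. If $n\equiv 2$ or $11$ mod $12$, then reducing modulo $3$ gives $n\equiv 2$ mod $3$ in both cases (since $2\equiv 11\equiv 2$ mod $3$), and reducing modulo $4$ gives $n\equiv 2$ mod $4$ in the first case and $n\equiv 3$ mod $4$ in the second; in particular $n\not\equiv 0,1$ mod $3$ and $n\not\equiv 0,1$ mod $4$.

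With these two observations in hand, the elimination is immediate for most cases. Case (ii) is excluded outright by the hypothesis that none of the listed numbers is a square. Case (iii) requires $n\equiv 0$ or $1$ mod $3$, which is incompatible with $n\equiv 2$ mod $3$; and cases (v) and (vi) each require $n\equiv 0$ or $1$ mod $4$, which is incompatible with $n\equiv 2$ or $3$ mod $4$. For case (i), I would split according to which two of the three classes $(2),(3),(2^2)$ lie in $C$ and invoke the corresponding part of \autoref{2-3-22}: if $(3),(2^2)\in C$ then part (b) would force $n(n-1)/2=(n^2-n)/2$ to be a square, which is ruled out since this quantity appears in the list in (ii); if $(2),(3)\in C$ then part (c) forces $n\equiv 0$ or $1$ mod $3$, again excluded; and if $(2),(2^2)\in C$ then part (d) forces $n\equiv 0$ or $1$ mod $4$, excluded. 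Thus case (i) cannot occur either.

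Having ruled out (i), (ii), (iii), (v), and (vi), the only surviving possibilities in the conclusion of \autoref{4_trans-allowed} are (iv), (vii), (viii), and (ix), which is exactly the assertion of the corollary. I do not expect a genuine obstacle here: the argument is a routine pruning of the case list of \autoref{4_trans-allowed}. The only points that need care are the bookkeeping of the congruences (confirming that $n\equiv 2,11$ mod $12$ really does force $n\not\equiv 0,1$ mod $3$ and mod $4$), and matching the quantity $n(n-1)/2$ appearing in \autoref{2-3-22}(b) with the entry $(n^2-n)/2$ in the square-list of condition (ii), so that the non-square hypothesis does the work of excluding that subcase. The hypothesis that $n$ is large enough is inherited directly from \autoref{4_trans-allowed} and is used only to guarantee that the infinitely-many-$n$ dichotomy applies.
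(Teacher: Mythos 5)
Your proof is correct and is exactly the intended argument: the paper states this corollary without a separate proof, as an immediate pruning of the case list of \autoref{4_trans-allowed}, which is what you carry out. Your congruence bookkeeping ($n\equiv 2,11$ bmod $12$ forcing $n\not\equiv 0,1$ bmod $3$ and bmod $4$), the identification of $n(n-1)/2$ from \autoref{2-3-22}(b) with the entry $(n^2-n)/2$ in the list of \autoref{4_trans-allowed}(ii), and the three-way split in case (i) are all accurate, so cases (i), (ii), (iii), (v), (vi) are eliminated and only (iv), (vii), (viii), (ix) remain.
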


\section{Covering symmetric groups with zeros of characters}

We would like to study the following questions, which are reformulations of \autoref{ThrChar}.
\begin{qst}
(1) For each $n\in \mathbb{Z}_{>0}$, what is the smallest number $Z(n)$ such that there exists $\chi_1,\dots,\chi_{Z(n)}\in \Irr(S_n)$ such that $\prod_{i=1}^{Z(n)}\chi_i(\sigma)=0$ for all $\sigma\in S_n\setminus \{1\}$, or equivalently $\bigcup_{i=1}^{Z(n)}\{\sigma\in S_n\mid \chi_i(\sigma)=0\} = S_n\setminus \{1\}$?\\
(2) For integers $n>k>0$, what is the smallest number $Z_k(n)$ such that there exists $\chi_1,\dots,\chi_{Z_k(n)}\in \Irr(S_n)$ such that $\prod_{i=1}^{Z(n)}\chi_i(\sigma)=0$ for all $\sigma$ with $1\leq \Vert \sigma\Vert \leq k$? \\(Obviously $Z(n)=Z_{n-1}(n)\geq Z_{n-2}(n)\geq \cdots \geq Z_1(n)$.)
\end{qst}

Based on the discussion above \autoref{2-3-22}, we can expect most irreducible characters to have $k$ or less zeros among the conjugacy classes $\lambda$ with $\Vert \lambda\Vert\leq k$, with some exceptions having more zeros, most notably those with $p$-defect $0$ for some prime $p$ and those corresponding to self-conjugate Young diagrams. The zeros coming from the self-conjugacy of Young diagrams are the odd permutations, so if we restrict ourselves to the even permutations, self-conjugacy does not provide any obvious zeros. Also, Olsson and Stanton \cite[Theorem 4.1]{OS} proved that there exists no irreducible character of $S_n$ which simultaneously has $p$-defect and $q$-defect $0$ for two distinct primes $p,q$ unless $n\leq (p^2-1)(q^2-1)/24$. Therefore, choosing $p$-defect zero characters for each prime $p$ might be the most efficient way to cover many classes by zero sets.

On the other hand, the asymptotic formula for the number of partitions of $n$ by Hardy and Ramanujan says $$\text{Number of conjugacy classes of }S_n \sim \frac{1}{4n\sqrt{3}}e^{\pi\sqrt{2n/3}}.$$ In particular, as $k$ increases, the number of classes $\lambda$ with $\Vert\lambda\Vert\leq k$ grows much faster than the expected number of zeros among these an irreducible character can have. 

Based on these observations, it is natural to make the following guesses.
\begin{conj}\label{conjectures}
{\upshape(1)} For any $N\in \mathbb{Z}_{>0}$, there exists a positive integer $M$ such that for all $n>M$, $Z(n)>N$.\\
{\upshape(2)} If $n\notin \{5,6,8,9,10,12,21\}$, then $Z(n)>3$.\\
{\upshape(3)} If $n>>k>>0$, then $Z_k(n)$ is at least the number of distinct primes not exceeding $k+1$.
\end{conj}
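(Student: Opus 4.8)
These three statements are genuinely conjectural, so what follows is an attack strategy rather than a complete argument; the natural approach is to scale up the Gr\"obner-basis-plus-Diophantine method behind \autoref{4_trans-allowed}, and I would begin with part (2), which is the heart of the matter and the direct completion of \autoref{mainthm2}. Suppose for contradiction that $\chi_1,\chi_2,\chi_3\in\Irr(S_n)$ cover $S_n\setminus\{1\}$; then their zero sets in particular cover the $11$ classes $\lambda$ with $1\le\Vert\lambda\Vert\le 4$. By pigeonhole some $\chi_i$ vanishes on a subset $C$ of size $\lceil 11/3\rceil=4$, so by \autoref{4_trans-allowed} this $C$ must, for $n$ outside a finite set, be one of the configurations (i)--(ix). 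The argument behind \autoref{mainthm2} already shows that under its congruence and non-square hypotheses these configurations cannot be assembled by three characters into a cover of all $11$ classes. To obtain the unconditional statement one must remove those hypotheses: for the residues of $n$ excluded by \autoref{4_trans-special} one reruns the classification, and for each surviving configuration one passes to the classes with $\Vert\lambda\Vert=5,6,\dots$, using \autoref{char_from_cycles} to generate further polynomial relations that a covering triple must jointly satisfy, eliminating $n$ from the combined system.

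The main obstacle in (2) is exactly the family of square conditions in \autoref{4_trans-allowed}(ii): statements such as $8n-15$ or $n(n-1)/2$ being a perfect square hold for infinitely many $n$, so Siegel's theorem yields no finiteness, and the discussion after \autoref{2-3-22} shows that for such sporadic $n$ the exceptional characters can genuinely exist. Thus one cannot rule out a single bad character; instead one must show that no \emph{three} characters, each constrained to these sporadic configurations, can \emph{simultaneously} cover every class. I expect this to require enlarging the set of test classes until the combined relations force a contradiction for all but finitely many $n$ in each residue class—a finite but substantial computation whose termination is not a priori guaranteed.

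For part (1) the strategy is the same run with $k$ growing. The number $m_k$ of classes with $\Vert\lambda\Vert\le k$ equals $\sum_{j\le k}p(j)$ and grows like $e^{c\sqrt{k}}$, so for $k$ large relative to a fixed $N$, pigeonhole forces one of $N$ covering characters to vanish on a set of $\lceil m_k/N\rceil$ small classes. What is needed is a uniform analogue of \autoref{4_trans-allowed}: a structure theorem asserting that, for $n$ large, any $\chi\in\Irr(S_n)$ vanishing on more than some threshold of classes with $\Vert\cdot\Vert\le k$ is forced either into a Diophantine condition cutting $n$ down to a finite set or into being a $p$-defect-zero character for some prime $p\le k+1$. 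The defect-zero characters are the genuine obstruction to a naive bound, since such a character vanishes on the whole $p$-singular locus at once; here one invokes \cite[Theorem 4.1]{OS} to bound how many mutually distinct defect-zero characters can coexist, so that for fixed $N$ and large $k$ they cannot by themselves account for all $m_k$ classes while the remaining ``generic'' characters each cover too few. Making the threshold in this structure theorem uniform in $n$ is the hard part, since the only proof of the $k=4$ case currently available is a finite Gr\"obner computation.

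Part (3) refines the efficiency heuristic of the discussion preceding \autoref{conjectures} into the claimed bound. If a cover were forced to treat each prime $p\le k+1$ by a dedicated character vanishing on the $p$-singular small classes, then \cite[Theorem 4.1]{OS}, which for $n$ large relative to $k$ prevents any single character from being defect zero at two distinct primes $\le k+1$, would yield at least $\pi(k+1)$ characters. The gap is precisely in justifying that the $p$-singular families for different primes cannot be covered jointly and cheaply: since \autoref{2-3-22} shows a single character may vanish at two prime cycles such as $(2)$ and $(3)$ subject only to a congruence on $n$ rather than being forbidden outright, the naive ``one prime, one character'' dichotomy fails at the level of individual cycles, and one instead needs an incidence or rank argument over the coverage of entire $p$-singular families. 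This step I regard as the most open: the relations of \autoref{char_from_cycles} express non-cycle values through cycle values but leave the cycle values at distinct primes as independent parameters, so an essential new input—presumably from block theory or the Murnaghan--Nakayama rule—seems unavoidable, and I do not see how to supply it within the present framework.
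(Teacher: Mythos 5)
This statement is a conjecture in the paper, not a theorem: the paper offers no proof, only the supporting computations for $n\le 90$, the heuristic discussion via Olsson--Stanton and Hardy--Ramanujan, the conditional partial result \autoref{Zn>3}, and the remark following it suggesting that extending the Gr\"obner computation to $L_k$ for $k=7,8,\dots$ should eventually remove the congruence and non-square hypotheses. Your proposal correctly identifies the statement as open and your outlined strategy—pigeonhole over the classes with $\Vert\lambda\Vert\le k$, the classification in \autoref{4_trans-allowed}, escalation to larger $k$, and \cite[Theorem 4.1]{OS} to separate defect-zero characters—is essentially the paper's own approach, with the genuine obstructions (sporadic square conditions, uniformity in $n$, joint coverage of $p$-singular classes) flagged exactly where the paper also leaves them unresolved.
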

\autoref{conjectures}(2) is based on our computation for $n\leq 90$; the only $n$'s with $Z(n)=3$ are $5,6,8,9,10,12,21$. For \autoref{conjectures}(3), we know that $Z_2(n)\geq 2$ for all $n\geq 4$ by \autoref{2-3-22}.

Using the methods in Section 3, we can prove the following partial result for \autoref{conjectures}, which is a precise restatement of \autoref{mainthm2}.
\begin{thm}\label{Zn>3}
If $n\equiv 2$ or $11$ mod $12$, $n\equiv 1$ or $4$ mod $5$, $n$ is large enough, and none of the numbers in \autoref{4_trans-allowed}(ii) is a square, then $Z_6(n)>3$.
\end{thm}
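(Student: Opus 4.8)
The plan is to argue by contradiction. Suppose there exist $\chi_1,\chi_2,\chi_3\in\Irr(S_n)$ whose zero sets together cover every class $\sigma$ with $1\le\Vert\sigma\Vert\le 6$. The first move is to restrict attention to the eleven classes $\mathcal{L}=\{\lambda:1\le\Vert\lambda\Vert\le 4\}$, which in particular must be covered. By the pigeonhole principle at least one character, say $\chi_1$, vanishes on a subset of $\mathcal{L}$ of size at least $\lceil 11/3\rceil=4$. Since $n\equiv 2$ or $11$ mod $12$, $n$ is large, and none of the numbers in \autoref{4_trans-allowed}(ii) is a square, \autoref{4_trans-special} applies and forces the zero set of $\chi_1$ inside $\mathcal{L}$ to be one of the four configurations (iv), (vii), (viii), (ix). I would sharpen this using \autoref{2-3-22}: because our congruences give $n\equiv 2$ mod $3$ and $n\equiv 2$ or $3$ mod $4$, parts (c) and (d) of that proposition show that no character vanishing on $(2)$ can also vanish on $(3)$ or on $(2^2)$. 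Hence a configuration of type (vii), which always contains $(2)$, must omit both $(3)$ and $(2^2)$; so for our $n$ the only large zero set that can contain $(3)$ is (viii), the only one that can contain $(2^2)$ is (ix), and the only one that can contain $(5)$ is a type (vii) set whose single slot is spent on $(5)$.

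The second step is a finite combinatorial enumeration of all ways three such zero sets can cover $\mathcal{L}$. The three ``bottleneck'' classes $(3)$, $(2^2)$, $(5)$ each admit essentially one large covering set (respectively (viii), (ix), a $(5)$-slotted (vii)), or else a small set of size at most $3$; small sets are themselves constrained by \autoref{2-3-22} and by the forcing results of \autoref{forcing_2}. Running through the distributions of these bottlenecks over the three characters produces a short list of admissible configurations, each given by a triple of allowed zero sets (together with the rational functions to which they pin the ratios $\rho_{\chi_i}(\lambda)=\chi_i(\lambda)/\chi_i(1)$ at the small classes, via the Gr\"obner bases underlying \autoref{4_trans-allowed}). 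Crucially, this enumeration does \emph{not} terminate in an empty list: a covering of $\mathcal{L}$ genuinely exists (for instance $\chi_1$ vanishing on $\{(3),(3^2),(2^4)\}$, $\chi_2$ of type (vii) with slot $(5)$, and $\chi_3$ of type (ix) together cover all eleven classes and violate none of the above constraints). This is exactly why the statement is about $Z_6$ and not $Z_4$: the classes of $\Vert\sigma\Vert=5$ and $6$ are indispensable.

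The third step imports those higher classes. I would extend the polynomial relations of \autoref{char_from_cycles} and \autoref{leading_coeff} to the seven classes with $\Vert\sigma\Vert=5$ and the eleven with $\Vert\sigma\Vert=6$, which requires computing the polynomials $T_\lambda$ for $\Vert\lambda\Vert=5$ and $6$ by the inductive procedure of Section 2. For each surviving configuration the values $\rho_{\chi_i}$ are already determined elements of $\mathbb{Q}(n)$, so imposing that $\chi_1,\chi_2,\chi_3$ also cover every class of order divisible by $5$ among those up to $\Vert\sigma\Vert\le 6$, namely $(5),(5,2),(5,3),(5,2^2)$, yields explicit equations. In each case one of the three characters is forced to take a nonzero, explicitly given value $\rho_{\chi_i}(\mu)$ at such a class $\mu$, and the integrality of $\omega_{\chi_i}(\mu)=|\mu|\rho_{\chi_i}(\mu)\in\mathbb{Z}$ becomes a divisibility condition on $n$; the hypothesis $n\equiv 1$ or $4$ mod $5$ is precisely the residue class that makes this integrality fail, closing the case. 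Any configurations not killed this way reduce, as in \autoref{4-trans-forbidden} and \autoref{2-5-42-odd}, to integral points on an affine curve of positive genus, whence Siegel's theorem (absorbed into ``$n$ large enough'') leaves only finitely many exceptions.

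The main obstacle is twofold. The routine but heavy part is organizing the enumeration so that symmetric and manifestly impossible triples are discarded quickly, and carrying the determined rational functions $\rho_{\chi_i}$ faithfully up to the $\Vert\sigma\Vert=5,6$ classes — which in turn demands the (nontrivial) computation of the connection coefficients and the polynomials $T_\lambda$ in that range. The essential difficulty, however, is to verify in \emph{every} surviving configuration that some single class of order divisible by $5$ produces a genuine $5$-adic obstruction, and that $n\equiv 1,4$ mod $5$ (rather than some other residue) is exactly the condition that eliminates it; this is the analogue at the prime $5$ of how $n\equiv 2,11$ mod $12$ eliminated cases (iii), (v), (vi) of \autoref{4_trans-allowed} at the primes $3$ and $2$, and matching up the arithmetic uniformly across all configurations is the delicate point.
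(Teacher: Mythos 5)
Your first two steps are sound and run parallel to the paper's own opening: the pigeonhole argument plus \autoref{4_trans-special}, and the observation that for $n\equiv 2,11\pmod{12}$ parts (c),(d) of \autoref{2-3-22} force any type (vii) zero set to omit $(3)$ and $(2^2)$, are both correct, and your ``bottleneck'' accounting of $(3),(2^2),(5)$ is a legitimate reorganization of the paper's steps (I)--(III). The genuine gap is in your third step, and it is fatal as written. You assert that ``for each surviving configuration the values $\rho_{\chi_i}$ are already determined elements of $\mathbb{Q}(n)$.'' This is false. The level-$\leq 4$ zero sets do not pin down the ratios: for a character vanishing exactly on the type (viii) set $\{(3),(3,2),(3^2),(3,2^2)\}$, the equations $t_{(3)}=T_{(3,2)}=T_{(3^2)}=T_{(3,2^2)}=0$ only express $t_{(4)},t_{(5)},\dots$ in terms of the still-free parameter $t_{(2)}$ (e.g.\ $t_{(4)}=-t_{(2)}/(n-3)$), and a type (vii) set $\{(2),(4),(3,2),(2^3)\}$ leaves $t_{(3)}$ (hence $t_{(2^2)}$, $t_{(5)}$, \dots) completely free, since $T_{(3,2)}$ and $T_{(2^3)}$ vanish identically once $t_{(2)}=t_{(4)}=0$. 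Because free parameters survive, your proposed mechanism --- impose vanishing at the four classes $(5),(5,2),(5,3),(5,2^2)$, read off explicit values in $\mathbb{Q}(n)$, and contradict integrality of $\omega_{\chi_i}$ modulo $5$ --- does not produce a contradiction; at best it adds a few more polynomial conditions on the free parameters, conditions of exactly the kind that must then be attacked by Gr\"obner bases, Siegel's theorem, positivity, and residue arguments. Moreover, replacing the full covering requirement on the $18$ classes of $\Vert\sigma\Vert=5,6$ by just those four classes discards the constraint that actually does the work: in the paper's proof, almost all configurations (including the sample triple you exhibit) are eliminated not by any $5$-adic obstruction but because the two characters not vanishing at $(2)$ cannot jointly cover $L_5$, respectively enough of $L_6$ --- this is steps (IV) and (V), a long Magma-assisted case analysis over \emph{all} of those classes.

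A second omission compounds this: you never invoke \autoref{odd_zeros}. That proposition is the device that makes the case analysis converge: the character with $\chi((2))=0$ is forced (by counting and \autoref{forcing_2}) to vanish somewhere on $L_3$, whence on \emph{all} of $L_3$, and the same all-or-nothing upgrade is applied again at level $5$ to get $L_5\subset V_1$. Without it, the distribution of the $7+11$ higher classes over three characters, each carrying undetermined parameters, has no organizing principle, and your ``routine but heavy'' enumeration is not set up to terminate. Your instinct about where the hypothesis $n\equiv 1,4\pmod 5$ enters is essentially right --- in the paper it is used only at the very end, for exactly two fully determined candidates for $V_1\cap L_{\leq 6}$, where $\rho_{\chi_1}((5))$ genuinely does become a rational function of $n$ and $\omega_{\chi_1}((5))\in\mathbb{Z}$ forces $n\equiv 0,2$ or $n\equiv 0,3\pmod 5$ --- but that point is reached only after the full level-$5$/level-$6$ covering analysis, which your plan has replaced by an insufficient substitute resting on a false determinacy claim.
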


\begin{proof}
Let $n$ be a positive integer satisfying the conditions. Suppose that $Z_6(n)=3$, so that there exists $\chi_1,\chi_2,\chi_3\in\Irr(S_n)$ such that at each $\lambda$, at least one of $\chi_i$'s vanish. Let $V_i = \{\lambda \mid \chi_i(\lambda)=0\}$. We will look at the sets $L_k:=\{\lambda \mid \Vert\lambda\Vert=k\}$ and $L_{\leq k}:=\{\lambda\mid 1\leq \Vert\lambda\Vert \leq k\}$ for each small $k$ and find all possible combinations of $V_i$'s that cover these sets.\newline

(I) $k=1, 2$.\\ 
By \autoref{2-3-22}, none of $\chi_i$ can vanish at more than one of the classes $(2), (3), (2^2)$. So we may assume that $\chi_1((2))=\chi_2((3))=\chi_3(2^2)=0$.\newline

(II) $k=3$.\\
By \autoref{forcing_2} (a)-(e) and (i), $\chi_2$ and $\chi_3$ cannot vanish at more than one of the classes $(4),(3,2),(2^3)$. Note that the case \autoref{forcing_2}(d) does not appear in \autoref{4_trans-allowed}(ii), but when $n\geq 7$ the number $-10n^2+82n-120$ is negative, so it cannot be a square, so this case is covered by the assumption that $n$ is large enough. Since we have three classes $(4),(3,2),(2^3)$ and each of $\chi_2,\chi_3$ can cover at most one, $\chi_1$ should also vanish at at least one of these classes. By \autoref{odd_zeros}, $\{(4),(3,2),(2^3)\}\subset V_1$. \newline

(III) $k=4$.\\
We have $5$ classes to cover here: $L_4=\{(5), (4,2), (3^2), (3,2^2), (2^4)\}$. By \autoref{4_trans-special}, the only case when $\chi_1$ vanishes at more than $1$ of them is when $L_4\cap V_1=\{(4,2),(2^4)\}$. Also, by \autoref{4_trans-allowed}, each of $\chi_2$ and $\chi_3$ cannot vanish at more than $2$ of these classes. Therefore, $\chi_1$ vanishes at either exactly $1$ of these classes or at $(4,2)$ and $(2^4)$, at least one of $\chi_2,\chi_3$ must vanish at exactly $2$ of these classes, and the remaining one must vanish at all of the remaining (one or two) classes. Also, $\{(5),(3^2)\}\not\subset V_i$ for each $i$ by \autoref{4-trans-forbidden}. \newline

(IV) $k=5$.\\
Suppose that $V_1\cap L_5=\emptyset$. Since $L_5 = \{(6),(5,2),(4,3),(4,2^2),(3^2,2),(3,2^3),(2^5)\}$ has $7$ classes, at least one of $V_2$ and $V_3$ must contain $4$ or more of these. We checked each of the possible $V_2\cap L_{\leq 5}$ and $V_3\cap L_{\leq 5}$, namely the subsets of $L_{\leq 5}$ satisfying the following conditions:
\begin{itemize}
\item It contains exactly one of $(3)$ and $(2^2)$;
\item It does not contain $(2)$;
\item It contains at least one element from $L_4$; 
\item It contains $4$ elements from $L_5$.
\end{itemize}
Each of these sets gives a Gr\"obner basis which contains a polynomial whose only irreducible factors are among the following 9 polynomials: $t_{(2)}, t_{(2)}^2 + (n-8)/(3n(n-1)), t_{(2)}^2 - 8(n-5)/(3n(n-1)(n-6)), t_{(2)}^2 -(2 (11 n^3 - 177 n^2 + 706 n - 840))/(3 (n^2 (n - 10) (n - 1)^2)),t_{(2)}^4 + (-20n^2 + 260n - 1120)/(n^2(n-1)^2)t_{(2)}^2 + (60n^4 - 1400n^3 + 11852n^2 - 37392n + 40320)/(n^4(n-1)^4), t_{(2)}^2 + (n^3 - 39n^2 + 188n - 240)/(3n^2(n-1)^2), t_{(2)}^4 + (-12n^2 + 108n - 304)/(n^2(n-1)^2)t_{(2)}^2 + (12n^2 - 172n + 560)/(n^3(n-1)^3),t_{(2)}^2 + (-32n + 60)/(n^2(n-1)^2), t_{(2)}^2 - 2/(n(n-1))$ unless the subset is $\{(3),(3,2^2),(6),(4,3),(3^2,2),(3,2^3)\}$ or $\{(3),(3^2),(6),(4,3),(3^2,2),(3,2^3)\}$. 

Among these $9$ polynomials, $t_{(2)}$ and $t_{(2)}^2 - 2/(n(n-1))$ cannot be zero at $t_{(2)}=\rho_{\chi_i}((2))$, since that would force $|V_i\cap L_{\leq 2}|\geq 2$. The polynomials $t_{(2)}^2 + (n-8)/(3n(n-1))$ and $t_{(2)}^2 + (n^3 - 39n^2 + 188n - 240)/(3n^2(n-1)^2)$ cannot be zero at $t_{(2)}=\rho_{\chi_i}((2))$ if $n\geq 8$ and $n\geq 34$, respectively, since $\rho_{chi_i}((2))> 0$. If $t_{(2)}^2 - 8(n-5)/(3n(n-1)(n-6))$ is zero at $t_{(2)}=\rho_{\chi_i}((2))$, then as in \autoref{2-5-42-odd}, we have $\frac{2n(n-1)(n-5)}{3(n-6)}\in \mathbb{Z}$, so $n-6$ cannot have prime factors other than $2,3,5$, and is not divisible by $8, 9$ and $25$, hence $n\leq 30$. The polynomial $t_{(2)}^2 + (-32n + 60)/(n^2(n-1)^2)$ is nonzero at $t_{(2)}=\rho_{\chi_i}((2))$ by the assumption that $8n-15$ (in \autoref{4_trans-allowed}(ii)) is not a square. The remaining three polynomials define affines curve of nonzero genus, so by Siegel's theorem, they cannot be zero at $t_{(2)}=\rho_{\chi_i}((2))$ if $n$ is large enough. 

It remains to check the cases $V_2\cap L_5 = \{(6),(4,3),(3^2,2),(3,2^3)\}$. In this case $V_3\supseteq \{(2^2), (5,2), (4,2^2), (2^5)\}$ and $|V_3\cap L_4|\geq 1$. Each of these sets gives a Gr\"obner basis containing a polynomial of the form $t_{(2)}f(n,n(n-1)t_{(2)}/2)$, where $f$ is a bivariate polynomial which defines an affine curve of nonzero genus. By Siegel's theorem, this cannot be zero at $t_{(2)}=\rho_{\chi_i}((2))$ if $n$ is large enough. Therefore, $V_2\cup V_3$ can never contain $L_5$, so $V_1\cap L_5$ cannot be empty, hence by \autoref{odd_zeros}, $L_5\subset V_1$.\newline

(V) $k=6$. \\
Here we have $11$ classes: $L_6 = \{(7),(6,2),(5,3),(5,2^2),(4^2),(4,3,2),(4,2^3),(3^3),(3^2,2^2),(3,2^4),(2^6)\}$. Suppose that $|V_1\cap L_6|\leq 2$. Then at least one of $V_2, V_3$ must contain at least $5$ elements of $L_6$. We checked using Magma that every subset of $L_{\leq 6}$ that contains exactly one of $(2),(3),(2^2)$, at least one element from $L_4$ and at least $5$ elements of $L_6$ has Gr\"obner basis with a polynomial that either defines an affine curve of nonzero genus or is $t_{(2)}^2 - 4(8n-15)/(n^2(n-1)^2)$, which is impossible by assumption. 

Using Magma, we checked that if $|V_1\cap L_6|>2$, then one of the following happens: 
\begin{enumerate}[label=(V)-(\roman*), align=left]
\item The Gr\"obner basis we get from $V_1\cap L_{\leq 6}$ has a polynomial in one variable $t_{(3)}$ over $\mathbb{Q}(n)$ such that each of its irreducible factors is either $t_{(3)}$ or $t_{(3)}+1/(2(n-2))$ or can be viewed as a polynomial over $\mathbb{Z}$ in two variables $n(n-1)(n-2)t_{(3)}/3$ and $n$ which defines an affine curve with nonzero genus.
\item $V_1\cap L_{\leq 6}\subseteq L_1\cup L_3\cup L_5\cup\{(4,2),(2^4),(6,2), (4^2),(4,3,2),(4,2^3),(3,2^4),(2^6)\}$
\item $V_1\cap L_{\leq 6}\subseteq L_1\cup L_3\cup L_5\cup\{(4,2),(5,2^2),(4^2),(4,3,2),(4,2^3)\}$
\item $V_1\cap L_{\leq 6}= L_1\cup L_3\cup L_5\cup\{(5),(4^2),(4,3,2),(4,2^3)\}$
\item $V_1\cap L_{\leq 6}= L_1\cup L_3\cup L_5\cup\{(3^2),(6,2),(3^3),(3^2,2^2)\}$ 
\item $V_1\cap L_{\leq 6}= L_1\cup L_3\cup L_5\cup \{(3,2^2),(4^2),(4,3,2),(4,2^3)\}$.
\end{enumerate}
The case (i) can be excluded by \autoref{2-3-22} and Siegel's theorem. Gr\"obner basis computation for the case (iv) forces $\chi((4,2))=0$, so \autoref{2-5-42-odd} excludes this case. For case (vi), Gr\"obner basis computation forces either $\chi((2^2))=0$ or $\rho_\chi((2^2)) = -(2n^2 + 46n - 240)/(n(n-1)(n-2)(n-3))$. The former is already excluded in (I). The latter forces $\omega_\chi((2^2)) = (n^2 + 23n - 120)/4\in \mathbb{Z}$, which implies $n\equiv 0,1$ mod $4$. This contradicts our assumption, so we can also exclude (vi).

For the case (iii), if $V_1 \supset L_1\cup L_3\cup L_5\cup\{(4,2)\}$ and $V_1\cap \{(4^2),(4,3,2),(4,2^3)\}\neq\emptyset$, then $V_1\supseteq L_1\cup L_3\cup L_5\cup\{(4,2),(4^2),(4,3,2),(4,2^3)\}$. Hence, if $|V_1\cap L_6|>2$, then $V_1\cap L_{\leq 6}$ is either $L_1\cup L_3\cup L_5\cup\{(4,2),(4^2),(4,3,2),(4,2^3)\}$ or $L_1\cup L_3\cup L_5\cup\{(4,2),(5,2^2),(4^2),(4,3,2),(4,2^3)\}$. Suppose that $V_1\cap L_{\leq 6} = L_1\cup L_3\cup L_5\cup\{(4,2),(5,2^2),(4^2),(4,3,2),(4,2^3)\}$. Then either $\chi((2^2))=0$ or $\rho_\chi((3)) = (21 (n - 5) (n - 16))/(n (n - 1) (n - 2) (n - 25))$. The former is again excluded in (I), and the latter forces $\omega_\chi((3)) = 7(n-5)(n-16)/(n-25) \in \mathbb{Z}$. Hence $n-25$ cannot have any prime divisor other than $2, 3, 5, 7$, and cannot be divisible by $2^3, 3^3, 5^2, 7^2$, so $n-25 \leq 2^2\cdot 3^2 \cdot 5\cdot 7 = 1260$. Therefore we can also exclude this case, so $V_1\cap L_{\leq 6} = L_1\cup L_3\cup L_5\cup\{(4,2),(4^2),(4,3,2),(4,2^3)\}$, which can be viewed as a part of the case (ii). Therefore, the only possible cases are (V)-(ii) and (V)-(v).

Suppose that $i\in\{2,3\}$, $|V_i\cap L_4|=2$ and $|V_i\cap L_6|\geq 4$. Then we have the following possibilities (after removing those directly violating the assumptions on $n$):
\begin{enumerate}[label=(V)-(\roman*), align=left, resume]
\item The Gr\"obner basis has a polynomial in one variable $t_{(2)}$ over $\mathbb{Q}(n)$ whose irreducible factors either define affine curves of nonzero genus or is one of the following: $t_{(2)}^2 + (-2n^2 - 94n + 864)/(n^2(n-1)^2), t_{(2)}^2 + (-128n + 1008)/(n^2(n-1)^2), t_{(2)}^2 + (-224n + 1284)/(n^2(n-1)^2)$.
\item $i=2$, $V_2\cap (L_2\cup L_4\cup L_6)\subseteq \{(3),(3^2),(3,2^2), (6,2),(5,3),(4,3,2),(3^3),(3^2,2^2),(3,2^4)\}$.
\item $i=3$, $V_3\cap (L_2\cup L_4\cup L_6)\subseteq \{(2^2), (4,2),(3,2^2), (5,2^2),(4^2),(4,3,2),(4,2^3),(3^2,2^2)\}$.
\end{enumerate}

Suppose that $V_1$ is as in (V)-(v). Then exactly two elements of $L_4\setminus V_1 = \{(5),(4,2),(3,2^2),(2^4)\}$ are in $V_2$ and the other two are in $V_3$. Also, at least one of $V_2$ and $V_3$ must also contain at least $4$ elements of $L_6\setminus V_1 = \{(7),(5,3),(5,2^2),(4^2),(4,3,2),(4,2^3),(3,2^4),(2^6)\}$, and the other must contain the remaining elements of $L_6\setminus V_1$. Therefore at least one of $V_2$, $V_3$ must satisfy (V)-(vii), which can be written as $\omega_\chi((2))^2 = n(n+47)/2 -216$ or $4(8n-63)$ or $56n-321$. Note that $56n-321\equiv 3$ mod $4$, so this case is impossible. We checked using Magma that the only cases with $\omega_\chi((2))^2 = n(n+47)/2-216$ are the cases where $i=3$ and $V_3\cap L_{\leq 6}\subseteq \{(2^2),(4),(4,2), (3,2^2),(5,2^2),(4^2),(4,3,2),(4,2^3),(3^2,2^2),(3,2^4)\}$. Also, the cases with $\omega_\chi((2))^2 = 4(8n-63)$ are the cases where $i=3$ and $\{(2^2),(4),(4,2),(3,2^2),(2^6)\}\subseteq V_3\cap L_{\leq 6}\subseteq \{(2^2),(4),(4,2), (3,2^2),(5,2^2),(4^2),(4,3,2),(4,2^3),(3^2,2^2),(2^6)\}$. Both of these cases force $V_2\supseteq \{(3), (5),(2^4), (7),(5,3)\}$. However the Gr\"obner basis for such $V_2$ always contains $t_{(2)}^4 + (-12n^2 + 188n + 384)/(n^2(n-1)^2)t_{(2)}^2 + (12n^2 + 4n - 1488)/(n^3(n-1)^3)$, which viewed as a polynomial in two variables $n(n-1)t_{(2)}/2$ and $n$ defines an affine curve of genus $3$. By Siegel's theorem, this is impossible. 

Therefore the only possibility is that $V_1$ is as in (V)-(ii). If $|V_1\cap L_4|=1$ and $|V_1\cap L_6|\leq 4$, then either $V_2$ or $V_3$ must satisfy one of (V)-(vii),(viii),(ix). Since (V)-(ii), (V)-(vii) and (V)-(ix) do not contain two classes $(5),(3^2)$ which cannot be covered by $V_2$ at the same time by \autoref{4-trans-forbidden}, we cannot have these cases. Hence $V_2$ is as in (V)-(viii), and $V_3\supseteq \{(2^2), (5),(7), (5,2^2)\}\cup (\{(4,2), (2^4),(4^2),(4,2^3),(2^6)\}\setminus V_1)$. Now Gr\"obner basis for $V_3$ and Siegel's theorem forces $\{(4,2),(4^2),(4,2^3),(2^6)\}\subset V_1$, so $V_1\cap L_{\leq 6} = L_1\cup L_3\cup L_5\cup\{(4,2), (4^2),(4,3,2),(4,2^3),(2^6)\}$. By examining $\omega_{\chi}((5))$ in this case, we see that $n\equiv 0$ or $2$ mod $5$, contrary to our assumption. If $|V_1\cap L_6|>4$, or if $|V_1\cap L_4|=2$ and $|V_1\cap L_6|>0$, then Gr\"obner basis computation forces $V_1\cap L_{\leq 6}=L_1\cup L_3\cup L_5\cup\{(4,2),(2^4),(6,2), (4^2),(4,3,2),(4,2^3),(3,2^4),(2^6)\}.$ Again, $\omega_{\chi}((5))$ tells us that $n\equiv 0$ or $3$ mod $5$, contrary to our assumption. Therefore no such triple $(\chi_1,\chi_2,\chi_3)$ exists.
\end{proof}

\begin{rmk}
Note that in the above proof, the condition about $n$ mod $5$ is only used in the last paragraph. We believe that if we continue the process in the same manner for $L_k$'s for $k=7, 8, \dots$, then after few more steps, we would be able remove this modulo $5$ condition, and probably all other conditions except that $n$ is large enough, which would prove \autoref{conjectures}(2). With more steps, we might even be able to do this with more than $3$ characters. However, in order to continue the process in a reasonable amount of time, it seems like we need either better implementation of algorithms than we currently have access to, or additional methods that quickly rule out some possibilities of $V_i$'s.
\end{rmk}

\section{Characters of defect zero}
In \autoref{def0}, we saw that the values of irreducible characters of $S_n$ of $p$-defect zero can be computed from the number $n$, the degree of the character, and the values at the cycles of length less than $p$. In particular, if $\chi$ has $2$-defect zero, or if $\chi$ has $3$-defect zero and is corresponding to a ``self-conjugate'' Young diagram so that $\chi((2))=0$, then the values of $\rho_\chi$ simply become a rational function in one variable $n$. In this section we record some facts about these polynomials. 

It is a well-known fact that the $2$-defect zero characters correspond to the so-called \emph{staircase partitions} $(k,k-1,k-2\dots, 1)$, so that $S_n$ has an irreducible character of $2$-defect zero if and only if $n=k(k+1)/2$ for some $k\in \mathbb{Z}_{>0}$. Hence, we may denote by $\psi_k$ the $2$-defect zero character of $S_{k(k+1)/2}$ that corresponds to the staircase partition $(k,k-1,\dots,1)$, and we will denote by $\rho_k$ the ratio $\rho_{\psi_k}= \psi_k/\psi_k(1)$. (Many authors denote the staircase partition or the corresponding character by $\rho_k$; in this paper, we have been using $\rho$ to mean the ratios instead of characters, so for the sake of consistency, we chose these notations.)

First we fix some notations for convenience.

\begin{dfn}
For positive integers $a, b$, define $$\Delta(a,b) = \prod_{i=1}^{b}\left(a-\frac{i(i+1)}{2}\right) = (a-1)(a-3)(a-6)\cdots (a-b(b+1)/2).$$
Also, for a positive integer $k$, $C(k)$ denotes the $k$th \emph{Catalan number} $$C(k) = \frac{1}{k+1}\binom{2k}{k} = \frac{(2k)!}{(k+1)!k!} .$$
\end{dfn}

\begin{lem}\label{Delta}
For any positive integers $a>b$, $$\Delta\left(\frac{a(a+1)}{2},b\right) = \frac{(a+b+1)!}{(a-b-1)!2^ba(a+1)}$$ where we set $0!=1$.
\end{lem}

\begin{proof}
\begin{align*}
&\Delta\left(\frac{a(a+1)}{2}, b\right) \\=& \left(\frac{a(a+1)}{2}-1\right)\left(\frac{a(a+1)}{2}-3\right)\cdots \left(\frac{a(a+1)}{2}-\frac{b(b+1)}{2} \right) \\=& (a+(a-1)+\cdots + 2)(a+(a-1)+\cdots + 3)\cdots (a+(a-1)+\cdots + (b+1))\\=&\left(\frac{(a-1)(a+2)}{2}\right)\left(\frac{(a-2)(a+3)}{2}\right)\cdots \left(\frac{(a-b)(a+b+1)}{2}\right)\\=& \frac{(a+b+1)!}{(a-b-1)!2^ba(a+1)}.\qedhere
\end{align*}
\end{proof}

\begin{dfn}
For each integer $i>1$, let $Q_{i}(n) = \prod_{a=1}^{i-1}(n-a)$. 
\end{dfn}

We are now ready to state the main result of this section. 
\begin{thm}\label{staircase_poly_existence}
For any partition $\lambda$, there exists a polynomial $P_\lambda$ with the following properties.
\begin{enumerate}[label={\upshape(\alph*)}]
\item $Q_{\supp(\lambda)}(n)\rho_k(\lambda)=P_\lambda(n)$ for any $k$, where $n=k(k+1)/2\geq \supp(\lambda)$.
\item $P_\lambda(n)=0$ for any positive integer $k$ such that either $n=k(k+1)/2<\supp(\lambda)$ or $2k-1<\lambda_1$ where $\lambda_1$ is the largest part of $\lambda$. In other words, $P_\lambda(x)$ is divisible by $\Delta(x, m)$ where $m=\max\{k\in \mathbb{Z}_{>0}\mid k(k+1)/2<\supp(\lambda)\text{ or }2k-1<\lambda_1\}$. 
\item If $\lambda$ has a part of even size, then $P_\lambda=0$.
\end{enumerate}
For the properties below, we assume that $\lambda$ does not have any part of even size.
\begin{enumerate}[resume,label={\upshape(\alph*)}]
\item $\deg P_\lambda = \supp(\lambda)-1-\Vert\lambda\Vert/2$. 
\item The leading coefficient of $P_\lambda$ is $\mathcal{L}(P_\lambda)=(-2)^{-\Vert \lambda\Vert/2}\prod_{i=1}^{r}C((\lambda_i-1)/2)^{a_i}$, where $\lambda=\left(\lambda_1^{(a_1)},\dots,\lambda_r^{(a_r)}\right)$.
\end{enumerate}
\end{thm}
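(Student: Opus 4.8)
The plan is to prove the five properties of $P_\lambda$ by leveraging \autoref{def0} and the explicit polynomials $T_\lambda$ from \autoref{char_from_cycles}, specializing to the $2$-defect zero case. Since $\psi_k$ has $2$-defect zero, by \autoref{def0} applied with $p=2$ the ratio $\rho_k(\lambda)$ is computed purely from $n=k(k+1)/2$ and the degree, with no free cycle-values: every $\rho_k((r))$ for $r\geq 2$ is itself determined (indeed $\psi_k((2))=0$, and more generally the odd-cycle values are forced). Concretely, I would substitute $t_{(2)}=t_{(4)}=\cdots=0$ (all even-length cycles vanish since a single $2r$-cycle is an odd permutation and $\psi_k$ corresponds to a self-conjugate diagram) into $T_\lambda$, and use the recursion from \autoref{char_from_cycles} to set up an induction on $\lambda$ in the ordering of \autoref{part_ordering}.

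For part (c), the vanishing when $\lambda$ has an even part is immediate: such a permutation is odd, and the staircase partition is self-conjugate, so $\psi_k$ vanishes on all odd classes; hence $\rho_k(\lambda)=0$ for all admissible $k$, forcing $P_\lambda\equiv 0$. For part (a), I would define $P_\lambda(n) := Q_{\supp(\lambda)}(n)\rho_k(\lambda)$ and show, by clearing the denominators $\prod_{i}(N-i)$ appearing in the terms of $T_\lambda$ via \autoref{char_from_cycles}(ii), that this is genuinely a polynomial in $n$ (the point being that $Q_{\supp(\lambda)}$ supplies exactly the factors needed to absorb all denominators, using the support bound $\sum b_i(i+1)\leq \supp(\lambda)$). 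Part (b) is where the self-conjugacy structure enters quantitatively: when $n=k(k+1)/2<\supp(\lambda)$ there is no class $\lambda$ in $S_n$ so $\rho_k(\lambda)$ is (conventionally) zero, and when $2k-1<\lambda_1$ the diagram is too small to admit a border strip of size $\lambda_1$, so the Murnaghan--Nakayama rule forces $\psi_k(\lambda)=0$; collecting these forced zeros over the relevant $k$ shows $\Delta(x,m)\mid P_\lambda(x)$.

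The substantive parts are (d) and (e), the degree and leading coefficient. The plan is a single induction on $\lambda$ using \eqref{4.1_eqn} specialized to $2$-defect zero. For (d), I would track the $n$-degree through the recursion: the leading term of $T_\lambda$ comes from the $t_{(k+1)}$-term identified in \autoref{leading_coeff}, and one checks degrees balance so that $\deg P_\lambda = \supp(\lambda)-1-\Vert\lambda\Vert/2$ (noting $\Vert\lambda\Vert/2$ is an integer precisely because all parts are odd). For (e), the leading coefficient should factor multiplicatively over the cycle structure, which is the source of the product $\prod_i C((\lambda_i-1)/2)^{a_i}$; the Catalan numbers arise because the leading-order count in \autoref{leading_coeff}, after the even-cycle values are killed, reduces to counting non-crossing-type pairings, and the factor $(-2)^{-\Vert\lambda\Vert/2}$ comes from the explicit constant in \autoref{leading_coeff} combined with \autoref{Delta}. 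The main obstacle I anticipate is (e): establishing that the leading coefficient of the inductive expression collapses to a clean product of Catalan numbers will require a careful combinatorial identity, most cleanly handled by computing $P_{(\lambda_1)}$ for a single odd cycle directly (giving one Catalan factor) and then showing the recursion multiplies these contributions without introducing lower-order corrections to the leading term — likely using \autoref{Delta} to convert the evaluation $n=k(k+1)/2$ into the factorial expressions that expose the Catalan number $C((\lambda_1-1)/2)=\frac{1}{((\lambda_1+1)/2)}\binom{\lambda_1-1}{(\lambda_1-1)/2}$.
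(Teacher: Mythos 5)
Your plan is built around substituting the forced zeros into the general cycle-based recursion ($T_\lambda$ and \eqref{4.1_eqn}), but this misses the paper's central device and leaves the hardest step unsupported. In \eqref{4.1_eqn} the odd-cycle values $t_{(2r+1)}$ are the \emph{free variables} (base cases) of the recursion, so no amount of specializing that recursion can determine $\rho_k((2r+1))$; yet your degree and leading-coefficient claims for general $\lambda$ all feed off the cycle case, which you propose to handle by "computing $P_{(\lambda_1)}$ for a single odd cycle directly" without giving any method. That is exactly the missing idea. The paper instead applies \autoref{general_poly_rel_thm} to $F_{(2),\overline{\lambda}}$, where $\overline{\lambda}$ is $\lambda$ with its smallest part decremented by one (hence an \emph{even} class): since $\rho_k((2))=0$, the left-hand side dies and one gets the homogeneous relation $0=\sum_{x\in(2)}\rho_k(xy)$, in which $\lambda$ is the largest odd-order class. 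This single recurrence treats cycles and non-cycles uniformly (for $\lambda=(2r+1)$ it expresses $\rho_k((2r+1))$ through the splittings $(j,2r-j)$, $j$ odd), is manifestly polynomial after multiplying by $Q_{\supp(\lambda)-1}$, and its explicitly enumerated merge classes $\alpha_i$ and split classes $\beta_j$ give (d) and (e); the Catalan numbers come from the convolution identity $\sum_h C(h)C(m-1-h)=C(m)$ applied to the $\beta_j$ coefficients, not from a non-crossing-pairing count, which you assert but do not supply.

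Even granting the cycle case, your route to (a) and (d) does not follow from what you cite. \autoref{char_from_cycles}(ii) only gives $\deg p\leq\deg q$ together with support bounds; it does not give polynomiality after clearing by $Q_{\supp(\lambda)}$, because substituting $\rho_k((2r+1))=P_{(2r+1)}/Q_{2r+1}$ into a term with exponent $b_i\geq 2$ produces repeated linear factors in the denominator, and $Q_{\supp(\lambda)}$ contains each factor only once, so a genuine cancellation statement is needed (check $T_{(3^2)}$: the term with $t_{(3)}^2$ survives only because its numerator happens to contain $N-2$). Likewise, a term of $T_\lambda$ with $\sum_i b_ii=\Vert\lambda\Vert-2$ and $\deg p=\deg q$ would contribute at order $n^{1-\Vert\lambda\Vert/2}$ and destroy (d); excluding this requires a strictly sharper inequality relating drops in $\Vert\cdot\Vert$ and in $\supp$ for classes of products $xy$ (something like $\Vert\lambda\Vert-\Vert\mu\Vert\leq 2(\supp(\lambda)-\supp(\mu))-2$), which is a real combinatorial fact you would have to prove; the paper never needs it because its recurrence enumerates the possible $\mu$ explicitly. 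Finally, in (b) the vanishing of $P_\lambda$ at triangular numbers $n=k(k+1)/2<\supp(\lambda)$ is not a convention: $P_\lambda$ is a fixed polynomial, there is no character value at such $n$, and the vanishing is a substantive claim that the paper proves by induction through the recurrence (including the boundary case $n=\supp(\lambda)-1$, where $Q_{\supp(\lambda)}$ itself vanishes). Your parts (c) and the Murnaghan--Nakayama half of (b) are fine and agree with the paper.
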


\begin{proof}
We use induction. For the base case we have $P_{(1)}(n)=1$. Now let $\lambda=\left(\lambda_1^{(a_1)},\dots,\lambda_r^{(a_r)}\right)$ be any partition, with odd parts $\lambda_1>\cdots >\lambda_r>1$. Suppose that for each partition $\sigma=\left(\sigma_1^{(b_1)},\dots,\sigma_s^{(b_s)}\right)$ such that $\sigma<\lambda$ in the ordering as in \autoref{part_ordering}, there exists a polynomial $P_{\sigma}$ of degree $\supp(\sigma)-1-\Vert\sigma\Vert/2$ such that: 
\begin{itemize}
\item $Q_{\supp(\sigma)}(n)\rho_k(\sigma)/\rho_k(1)=P_{\sigma}(n)$ for any $k>1$ and $n=k(k+1)/2\geq\supp(\sigma)$,
\item $P_{\sigma}(k(k+1)/2)=0$ for positive integers $k\geq 1$ with either $k(k+1)/2<\supp(\sigma)$ or $2k-1<\lambda_1$, and 
\item $\mathcal{L}(P_{\sigma})=(-2)^{-\Vert\sigma\Vert/2}\prod_{i=1}^{s}C((\sigma_i-1)/2)^{b_i}$.
\end{itemize}
Note that $t_\lambda$ is the largest among the variables $t_L$ with odd $o(L)$ appearing in the polynomial $F_{(2),\overline{\lambda}}$, where $\overline{\lambda}= \left(\lambda_1^{(a_1)},\dots, \lambda_{r-1}^{(a_{r-1})},\lambda_r^{(a_r-1)},\lambda_r-1\right)$. By \autoref{general_poly_rel_thm} applied to $F_{(2),\overline{\lambda}}$ and $\rho_k$ for each $k$ with $n=k(k+1)/2\geq \supp(\overline{\lambda})=\supp(\lambda)-1$, for any $y\in \overline{\lambda}$, we have $$ \frac{n(n-1)}{2} \rho_k((2))\rho_k(\overline{\lambda}) = \rho_k(1)\sum_{x\in (2)}\rho_k(xy).$$ $\rho_k$ vanishes at every element of even order, and $\rho_k(1)>0$, so from the above equality, we get 
\begin{equation}\label{staircase_poly_recurrence}
0 = \sum_{x\in (2)}\rho_k(xy) = \sum_{\substack{\sigma<\lambda\\ o(\sigma)\text{ odd}}}\left\lvert\{x\in (2)\mid xy\in \sigma\}\right\rvert\rho_k(\sigma) + |\{x\in (2)\mid xy\in \lambda\}|\rho_k(\lambda).
\end{equation}

Note that if $\lambda= (\lambda_1^{(a_1)},\dots, \lambda_r^{(a_r)})$, then $$|\{x\in (2) \mid xy\in \lambda\}| = (\lambda_r-1)(n-\supp(\lambda)+1) $$ which is computed as the number of ways to choose one element from the support of the unique cycle of $y$ of length $\lambda_r-1$ and another element not in the support of $y$. 

Let $\sigma\neq \lambda$ be another conjugacy class with odd $o(\sigma)$ which contains $xy$ for some $x\in (2)$. Then it must correspond to a partition of the form $$\alpha_i=(\lambda_1^{(a'_1)},\dots, \lambda_{r-1}^{(a'_{r-1})}, \lambda_r^{(a'_r-1)},\lambda_i+\lambda_r),\text{ where }a'_j = a_j-\delta_{i,j}$$ or $$\beta_j =(\lambda_1^{(a_1)},\dots, \lambda_{r-1}^{(a_{r-1})}, \lambda_r^{(a_r-1)},\lambda_r-j-1, j),\ 1\leq j \leq \frac{\lambda_r-1}{2}, j\text{ odd}.$$ The case $\sigma=\alpha_i$ happens when $x=(ab)$ for some $a$ in the support of the unique cycle of $y$ of length $\lambda_r-1$, and $b$ in the support of some cycle of length $\lambda_i$. Therefore $$\supp(\alpha_i)=\supp(\lambda)-1\text{ and }|\{x\in (2)\mid xy\in \alpha_i\}| = (\lambda_r-1)(a_i-\delta_{i,r})\lambda_i.$$ The case $\sigma=\beta_j$ happens when $x=(ab)$ for some $a,b$ in the support of the unique cycle $\zeta$ of $y$ of length $\lambda_r-1$. More precisely, this happens when $b=\zeta^ja$ or $a=\zeta^j b$. Therefore $$\supp(\beta_j)=\supp(\lambda)-1-\delta_{j,1}-\delta_{3,\lambda_r}\text{ and }|\{x\in (2) \mid xy\in \beta_j\}| = \frac{\lambda_r-1}{1 + \delta_{2j, \lambda_r-1}}.$$ Note that if we use $1\leq j\leq \lambda_r-2$ instead of $i\leq j\leq (\lambda_r-1)/2$, then we get each $\beta_j$ ($1\leq j<(\lambda_r-1)/2$) twice except when $2j = \lambda_r-1$. 

We also know by the induction hypothesis that $\rho_k(\sigma)=P_\sigma(n)/Q_{\supp(\sigma)}(n)$ for some polynomial $P_\sigma$ of degree $\supp(\sigma)-1-\Vert\sigma\Vert/2$ whose leading coefficient is as described above. Now we can rewrite \eqref{staircase_poly_recurrence}, after multiplying by $Q_{\supp(\lambda)-1}(n)$, as
\begin{equation}\label{staircase_P_recurrence}
Q_{s}(n)\rho_k(\lambda) =-\sum_{i=1}^{r}(a_i-\delta_{i,r})\lambda_iP_{\alpha_i}(n)- \sum_{\substack{1\leq j\leq \lambda_r-2\\j \text{ odd}}} \frac{P_{\beta_j}(n)}{2}(n-s+2)^{\delta_{j,1}}(n-s+3)^{\delta_{\lambda_r,3}}
\end{equation}
where $s=\supp(\lambda)$, and the coefficients of $P_{\beta_j}$ are simplified by using the alternative range of $j$ as explained above. The right-hand side is a polynomial in $n$, and we choose this polynomial as our $P_\lambda$. 

Note that by the induction hypothesis, $$P_{\alpha_i}\left(\frac{k(k+1)}{2}\right)=0\text{ and }P_{\beta_j}\left(\frac{k(k+1)}{2}\right)\left(\frac{k(k+1)}{2}-s+2\right)^{\delta_{j,1}}\left(\frac{k(k+1)}{2}-s+3\right)^{\delta_{\lambda_r,3}} = 0$$ for all positive integers $k$ with $k(k+1)/2<s-1$, so $P_\lambda(k(k+1)/2)$ also becomes $0$ for these $k$. If $n=k(k+1)/2 = s-1$, then the equality \eqref{staircase_poly_recurrence} and hence \eqref{staircase_P_recurrence} holds for this $n$ and $Q_s(s-1) =0$, so $P_\lambda(s-1)=0$. Also, since the staircase partition of $k(k+1)/2$ does not have rim hook of length larger than $2k-1$, by the Murnaghan-Nakayama rule we have $P_{\lambda}(k(k+1)/2)=Q_s(k(k+1)/2)\rho_k(\lambda)=0$ if $s\leq k(k+1)/2$ and $2k-1<\lambda_1$.

The partitions $\alpha_i$ and $\beta_j$ satisfies $\Vert\alpha_i\Vert = \Vert\lambda\Vert$ and $\Vert\beta_j\Vert=\Vert\lambda\Vert-1$. By the induction hypothesis, $$\deg P_{\alpha_i} = \supp(\alpha_i)-1-\frac{\Vert\alpha_i\Vert}{2} = \supp(\lambda)-2-\frac{\Vert\lambda\Vert}{2},$$ and $$\deg P_{\beta_j} =\supp(\beta_j)-1 -\frac{\Vert\beta_j\Vert}{2} =\supp(\lambda)-2-\delta_{j,1}-\delta_{\lambda_r,3} -\frac{\Vert\lambda\Vert}{2}+1.$$ We also have $$\mathcal{L}(P_{\alpha_i}) = (-2)^{-\Vert\lambda\Vert/2}C\left(\frac{\lambda_i+\lambda_r-2}{2}\right)\prod_{j=1}^{r}C\left(\frac{\lambda_j-1}{2}\right)^{a_j-\delta_{i,j}-\delta_{r,j}}$$ and $$\mathcal{L}(P_{\beta_j}) = (-2)^{-\Vert\lambda\Vert/2-1}C\left(\frac{j-1}{2}\right)C\left(\frac{\lambda_r-j-2}{2}\right)\prod_{i=1}^{r}C\left(\frac{\lambda_i-1}{2}\right)^{a_i-\delta_{r,i}}. $$ By \eqref{staircase_P_recurrence}, $P_\lambda$ has degree $\supp(\lambda)-1-\Vert\lambda\Vert/2$ and the leading coefficient of $P_{\lambda}$ is the sum of the leading coefficients of $-2^{-1}P_{\beta_j}$, unless this sum is $0$. The sum is:
\begin{align*}
&\sum_{\substack{1\leq j\leq \lambda_r-1\\j\text{ odd}}}-2^{-1}\mathcal{L}(P_{\beta_j})\\=& \sum_{\substack{1\leq j\leq \lambda_r-1\\j\text{ odd}}} -\frac{1}{2} (-2)^{-\Vert\lambda\Vert/2-1}C\left(\frac{j-1}{2}\right)C\left(\frac{\lambda_r-j-2}{2}\right)\prod_{i=1}^{r}C\left(\frac{\lambda_i-1}{2}\right)^{a_i-\delta_{r,i}}\\=&(-2)^{-\Vert\lambda\Vert/2}\left( \prod_{i=1}^{r}C\left(\frac{\lambda_i-1}{2}\right)^{a_i-\delta_{r,i}}\right)\sum_{0\leq h\leq \frac{\lambda_r-j}{2}-1} C(h)C\left(\frac{\lambda_r-1}{2}-1-h\right)\\=& (-2)^{-\Vert\lambda\Vert/2}\left( \prod_{i=1}^{r}C\left(\frac{\lambda_i-1}{2}\right)^{a_i-\delta_{r,i}}\right)C\left(\frac{\lambda_r-1}{2}\right)=(-2)^{-\Vert\lambda\Vert/2}\prod_{i=1}^{r}C\left(\frac{\lambda_i-1}{2}\right)^{a_i}
\end{align*}
where the penultimate equality follows from the recurrence relation of Catalan numbers. Since this is nonzero, this is $\mathcal{L}(P_\lambda)$, and $\deg P_\lambda=\supp(\lambda)-1-\Vert\lambda\Vert/2$.
\end{proof}

\begin{crl}\label{staircase_cycle_thm}
For any positive integer $k$ and $n=k(k+1)/2$,
\begin{equation}\label{staircase_cycle}\rho_k((2r+1))=\frac{P_{(2r+1)}(n)}{Q_{2r+1}(n)}=(-2)^{-r}C(r)\frac{\Delta(n,r)}{Q_{2r+1}(n)}
\end{equation} for every positive integer $r$ such that $2r+1\leq n$. Also,
\begin{align*}&\rho_k((2r+1,3)) = \left(n^2 - \frac{12r^2+19r+8}{r+2}n + (2r+1)(2r+3)(r+1)\right)\frac{C(r)C(1)\Delta(n, r)}{(-2)^{r+1}Q_{2r+4}(n)} \\&\rho_k((2r+1, 5)) = \left(n^3 - \frac{30r^2+49r+27}{r+3}n^2 + \frac{20r^4+120r^3+265r^2+228r+69}{r+3}n\right. \\&\left.\ \ \ \ \ \ \ \ \ \ \ \ \ \ \ \ \ \ \ \ -\frac{1}{2}(2r+1)(r+1)(2r+3)(r+2)(2r+5) \right)\frac{C(r)C(2)\Delta(n,r)}{(-2)^{r+2}Q_{2r+6}(n)}
\end{align*} for every positive integer $r$ such that the permutation is defined in $S_n$.
\end{crl}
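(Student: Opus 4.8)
The plan is to derive the three formulas by applying the recurrence \eqref{staircase_P_recurrence} from the proof of \autoref{staircase_poly_existence} to the specific partitions $(2r+1)$, $(2r+1,3)$, and $(2r+1,5)$, and then evaluating the resulting polynomials using \autoref{Delta}. The first formula is essentially immediate: taking $\lambda=(2r+1)$, we have $\supp(\lambda)=2r+1$, $\Vert\lambda\Vert=2r$, so by \autoref{staircase_poly_existence}(b) the polynomial $P_{(2r+1)}$ is divisible by $\Delta(x,m)$ where $m$ is the largest $k$ with $k(k+1)/2<2r+1$ or $2k-1<2r+1$; the binding condition $2k-1<2r+1$ gives $m=r$. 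Since $\deg P_{(2r+1)}=\supp(\lambda)-1-\Vert\lambda\Vert/2 = 2r-1-r = r-1 = \deg\Delta(x,r)$ by part (d), we conclude $P_{(2r+1)}(x)=\mathcal{L}(P_{(2r+1)})\Delta(x,r)$, and the leading coefficient $(-2)^{-r}C(r)$ from part (e) gives \eqref{staircase_cycle}.

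For the two-part formulas, I would take $\lambda=(2r+1,3)$ and $\lambda=(2r+1,5)$ and run the recurrence \eqref{staircase_P_recurrence} directly. The key structural input is again \autoref{staircase_poly_existence}(b): for $(2r+1,3)$ we have $\supp(\lambda)=2r+4$ and the zero-locus forces divisibility by $\Delta(x,r)$ (the part $2r+1$ dominates the constraint $2k-1<\lambda_1$), and by part (d) the degree is $(2r+4)-1-(r+1)=r+2$; thus $P_{(2r+1,3)}(x) = \Delta(x,r)\cdot g(x)$ for a \emph{quadratic} $g$. Likewise $P_{(2r+1,5)}(x)=\Delta(x,r)\cdot h(x)$ for a cubic $h$. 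Rather than expanding the full recurrence, I would determine the cofactors $g$ and $h$ by their leading coefficients (supplied by part (e): these are $(-2)^{-(r+1)}C(r)C(1)$ and $(-2)^{-(r+2)}C(r)C(2)$, matching the claimed prefactors) together with enough additional linear data. The cleanest source of such data is \autoref{general_poly_rel_thm}: the recurrence expresses $\rho_k(2r+1,3)$ in terms of $\rho_k$ at the smaller partitions $\alpha_i$ and $\beta_j$ appearing in the proof of \autoref{staircase_poly_existence}, all of which are single cycles or already-known quantities, so each coefficient of $g$ and $h$ can be read off.

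The most efficient route, which I would adopt, is to plug in special values of $n$. Since $P_\lambda(k(k+1)/2)=Q_{\supp(\lambda)}(k(k+1)/2)\rho_k(\lambda)$ and $\rho_k(\lambda)=0$ whenever $2k-1<2r+1$ by Murnaghan--Nakayama (part (b)), the extra roots beyond those in $\Delta(x,r)$ pin down the remaining coefficients of $g$ and $h$. Concretely, for $(2r+1,3)$ the value at $k=r+1$ (i.e.\ $n=(r+1)(r+2)/2$) and the value forced at $n=\supp(\lambda)-1=2r+3$ give two further linear equations on the three coefficients of the quadratic $g$, which together with its known leading coefficient determine $g$ completely; the analogous three conditions determine the cubic $h$ for $(2r+1,5)$. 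Using \autoref{Delta} to evaluate $\Delta$ at these arguments in closed form turns each condition into an explicit rational identity in $r$, and matching against the stated polynomials $n^2 - \frac{12r^2+19r+8}{r+2}n + (2r+1)(2r+3)(r+1)$ and its cubic analogue finishes the proof.

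\textbf{Main obstacle.} The genuine difficulty is bookkeeping rather than conceptual: in the recurrence for $(2r+1,5)$ there are several $\beta_j$-type contributions (from splitting the $(2r+1)$-cycle, which when $\lambda_r=5$ also triggers the $\delta_{\lambda_r,3}$ and $\delta_{j,1}$ correction factors in \eqref{staircase_P_recurrence}), and assembling their leading behavior and their values at the special arguments without arithmetic slips is delicate. I would manage this by checking the degree and leading-coefficient constraints from parts (d) and (e) at each stage as a consistency test, and by verifying the final cubic against a small explicit case (say $r=1$, where $(3,5)=(5,3)$ lives in $S_8=S_{4\cdot5/2}$ with $k=3$) using the known character table of $S_8$.
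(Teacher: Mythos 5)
Your treatment of the first formula is correct and is exactly the paper's argument: $\Delta(x,r)$ divides $P_{(2r+1)}$, the degrees agree, and part (e) supplies the leading coefficient, so $P_{(2r+1)}=(-2)^{-r}C(r)\Delta(x,r)$. (Two off-by-one slips cancel there: $\deg P_{(2r+1)}=(2r+1)-1-r=r$ and $\deg\Delta(x,r)=r$, not $r-1$.)

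For the two-part formulas there is a genuine gap. The paper proves them by actually running the recurrence \eqref{staircase_P_recurrence}: for $\lambda=(2r+1,3)$ and $y\in\overline{\lambda}=(2r+1,2)$, the only odd classes of the form $xy$ with $x\in(2)$ are $\lambda$ itself, $(2r+1)$ (the transposition cancels the $2$-cycle of $y$) and $(2r+3)$ (it merges the two cycles), giving $2(n-2r-3)\rho_k(\lambda)=-\rho_k((2r+1))-2(2r+1)\rho_k((2r+3))$; substituting \eqref{staircase_cycle} at $r$ \emph{and} at $r+1$ and using $C(r+1)=\tfrac{2(2r+1)}{r+2}C(r)$ yields precisely the stated quadratic --- the denominator $r+2$ in its middle coefficient comes from this Catalan ratio, i.e.\ from the merged class $(2r+3)$, a contribution your interpolation scheme has no way to see. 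Moreover, both of the linear conditions you propose are false. There is no value ``forced at $n=\supp(\lambda)-1=2r+3$'': \autoref{staircase_poly_existence}(b) constrains $P_\lambda$ only at triangular numbers $k(k+1)/2$, and $2r+3$ is generally not one; concretely, $P_{(3,3)}(5)=\tfrac14\Delta(5,1)(25-65+30)=-10\neq 0$. Likewise, at $k=r+1$ part (b) does not apply, since its hypothesis reads $2k-1=2r+1<\lambda_1=2r+1$, which fails; and the value is genuinely nonzero in general: for $r=3$, $k=4$, removing the unique rim hook of length $7$ from the staircase $(4,3,2,1)$ leaves $(2,1)$, whose character value at $(3)$ is $-1$, so $\psi_4((7,3))=\pm 1\neq 0$ --- equivalently, the claimed quadratic at $(r,n)=(3,10)$ equals $100-346+252=6\neq 0$. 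So after the leading coefficient you have no valid equations left, and the quadratic $g$ (a fortiori the cubic $h$) cannot be pinned down this way. The workable route is the one you sketched in your second paragraph and then set aside: expand the recurrence itself, which is what the paper means by ``repeated applications of \eqref{staircase_P_recurrence}.'' (Two further slips in that sketch: for $(2r+1,5)$ the splitting terms come from the $4$-cycle of $y\in(2r+1,4)$, not from the $(2r+1)$-cycle, and the factor $\delta_{\lambda_r,3}$ is not triggered since $\lambda_r=5$; also your proposed sanity check for $r=1$ lives in $S_{10}$ with $k=4$, since $(3,5)$ requires a triangular $n\geq 8$, not in $S_8$ with $k=3$.)
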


\begin{proof}
By \autoref{staircase_poly_existence}, we know that $P_{(2r+1)}(x)$ has leading coefficient $(-2)^{-r}C(r)$, has degree $r$, and is divisible by $\Delta(x,r)$. Therefore we must have \eqref{staircase_cycle}. The other equalities follow from \eqref{staircase_cycle} and repeated applications of \eqref{staircase_P_recurrence}.
\end{proof}

\begin{rmk}
The proof of \autoref{staircase_poly_existence} gives us a recursive algorithm to compute the polynomial $P_\lambda(x)$ from the polynomials $P_\sigma(x)$ for the cycle types $\sigma<\lambda$: we can use the recurrence relation \eqref{staircase_P_recurrence}, with the left-hand side replaced by $P_{\lambda}(x)$ and each occurrence of $n$ on the right-hand side by $x$. 

It would be helpful if we can find a reasonably simple extension of \eqref{staircase_cycle} that gives a closed-form formula for $P_\lambda(x)$ for general $\lambda$, but we do not know if this is possible.
\end{rmk}

\begin{crl}
For each class $\lambda$ of odd order, $$\rho_k(\lambda)\frac{Q_{\supp(\lambda)}(\frac{k(k+1)}{2})}{\left(\frac{k(k+1)}{2}\right)^{\supp(\lambda)-1-\Vert \lambda\Vert/2}(-2)^{-\Vert \lambda\Vert/2}\prod_{i=1}^{r}C((\lambda_i-1)/2)^{a_i}} \rightarrow 1\text{ as }k\rightarrow \infty.$$ Also, there are at most $\supp(\lambda)-1-\Vert\lambda\Vert/2$ positive integers $k$ such that $\psi_k(\lambda)=0$. 
\end{crl}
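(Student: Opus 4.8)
The plan is to read everything off from \autoref{staircase_poly_existence}, since this corollary is essentially a reformulation of the degree and leading-coefficient statements there. Write $n=k(k+1)/2$, and set $d=\supp(\lambda)-1-\Vert\lambda\Vert/2$ and $L=(-2)^{-\Vert\lambda\Vert/2}\prod_{i=1}^{r}C((\lambda_i-1)/2)^{a_i}$. Since $\lambda$ has odd order, the order of a permutation of cycle type $\lambda$ is the lcm of its parts, which is odd precisely when every part is odd; hence parts (d) and (e) of \autoref{staircase_poly_existence} apply and give that $P_\lambda$ is a polynomial of degree exactly $d$ with leading coefficient $L$. The point to record first is that $L\neq 0$: it is a product of positive Catalan numbers times the nonzero scalar $(-2)^{-\Vert\lambda\Vert/2}$, so $P_\lambda$ is a nonzero polynomial of degree $d$.

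For the asymptotic statement, I would first observe that $\psi_k(\lambda)=0$ is equivalent to $\rho_k(\lambda)=0$ because $\psi_k(1)>0$. For all sufficiently large $k$ we have $n\geq\supp(\lambda)$, hence $n-a>0$ for every $a\in\{1,\dots,\supp(\lambda)-1\}$ and so $Q_{\supp(\lambda)}(n)\neq 0$; part (a) then gives $\rho_k(\lambda)=P_\lambda(n)/Q_{\supp(\lambda)}(n)$. Substituting this into the displayed ratio causes the factor $Q_{\supp(\lambda)}(n)$ to cancel, leaving exactly $P_\lambda(n)/(n^{d}L)$. Because $P_\lambda$ has degree $d$ and leading coefficient $L$, we have $P_\lambda(n)=Ln^{d}+O(n^{d-1})$, so $P_\lambda(n)/(n^{d}L)\to 1$ as $n\to\infty$; and $n=k(k+1)/2\to\infty$ as $k\to\infty$, which yields the claimed limit.

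For the counting statement, I would note that an element of cycle type $\lambda$ exists in $S_n$ only when $n\geq\supp(\lambda)$, so only such $k$ are relevant. For each of these the computation above shows $Q_{\supp(\lambda)}(n)\neq 0$, hence $\psi_k(\lambda)=0$ if and only if $P_\lambda(n)=0$. The map $k\mapsto k(k+1)/2$ is strictly increasing, so distinct admissible $k$ produce distinct values $n$; since $P_\lambda$ is a nonzero polynomial of degree $d$ it has at most $d$ roots, and therefore at most $d=\supp(\lambda)-1-\Vert\lambda\Vert/2$ admissible $k$ can satisfy $\psi_k(\lambda)=0$.

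There is no substantial obstacle here: the corollary is a direct consequence of the degree and leading-coefficient data in \autoref{staircase_poly_existence}. The only points requiring care are bookkeeping ones, namely restricting attention to $k$ with $n\geq\supp(\lambda)$ (so that the permutation exists and $Q_{\supp(\lambda)}(n)$ is nonzero), and recording that $L\neq 0$ so that $P_\lambda$ genuinely has degree $d$ and hence at most $d$ roots. The spurious roots of $P_\lambda$ arising from part (b) (those $n=k(k+1)/2<\supp(\lambda)$) only consume part of the degree budget and do not affect the count, since those $k$ are excluded from the start.
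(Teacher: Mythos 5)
Your proposal is correct and follows essentially the same route as the paper: both parts are read off directly from \autoref{staircase_poly_existence}, the limit being $P_\lambda(n)$ divided by its leading term and the count coming from the bound on the number of roots of the degree-$d$ polynomial $P_\lambda$. Your write-up merely makes explicit the bookkeeping the paper leaves implicit (odd order forces all parts odd, $L\neq 0$, restriction to $k$ with $k(k+1)/2\geq\supp(\lambda)$, injectivity of $k\mapsto k(k+1)/2$), which is fine.
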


\begin{proof}
For the first part, the left-hand side is $P_\lambda(n)$ divided by its leading term, where $n=k(k+1)/2$. For the second part, $\psi_k(\lambda)=0$ if and only if $P_\lambda(k(k+1)/2)=0$, and since $P_\lambda$ is a polynomial, the number of $k$'s satisfying this cannot exceed $\deg P_\lambda = \supp(\lambda)-1-\Vert\lambda\Vert/2$. 
\end{proof}

The irreducible characters having $3$-defect $0$ which also vanishes at $(2)$ correspond to the partitions of the form $$(3k - 2, 3k - 4, . . . , k + 4, k + 2, k, (k - 1)^2, (k - 2)^2, \dots , 2^2, 1^2)\vdash k(3k-2)$$ or $$(3k, 3k - 2, . . . , k + 4, k+2, k^2, (k - 1)^2,\dots , 2^2, 1^2)\vdash k(3k+2).$$ So $S_n$ has a (unique) such character if and only if $n = k(3k-2)$ or $n=k(3k+2)$ for some positive integer $k$. Such numbers are called the \emph{generalized octagonal numbers}, listed on the OEIS\cite{OEIS} as A001082. Let us denote by $\tau_n$ this character in $S_n$. We have the following analogue of \autoref{staircase_poly_existence}.

\begin{thm}\label{3-def-zero}
For any partition $\lambda$, there exists a polynomial $\tilde{P}_\lambda$ with the following properties.
\begin{enumerate}[label={\upshape(\alph*)}]
\item $Q_{\supp(\lambda)}(n)\rho_{\tau_n}(\lambda) = \tilde{P}_\lambda(n)$ if $n\geq \supp(\lambda)$.
\item $\tilde{P}_\lambda(k(3k+2))=0$ if either $6k-1<\lambda_1$ or $k(3k+2)<\supp(\lambda)$. Also, $\tilde{P}_\lambda(k(3k-2))=0$ if either $6k-5<\lambda_1$ or $k(3k-2)<\supp(\lambda)$. In other words, $\tilde{P}_\lambda(x)$ is divisible by $(x-n)$ for each of these $n=k(3k\pm 2)$.
\item If $\lambda$ has a part of size divisible by $3$ or if $\Vert\lambda\Vert$ is odd, then $\tilde{P}_\lambda=0$. 
\item If $\tilde{P}_\lambda \neq 0$, then $\deg \tilde{P}_\lambda \leq \Vert\lambda\Vert$.
\end{enumerate}
\end{thm}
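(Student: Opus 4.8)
The plan is to mirror the inductive argument of \autoref{staircase_poly_existence}, replacing its single structural input---that $\psi_k$ vanishes on all even-order elements---by the two vanishing properties enjoyed by $\tau_n$: as a $3$-defect zero character it vanishes on every permutation of order divisible by $3$ (equivalently, on every class with a part divisible by $3$), and, since its Young diagram is self-conjugate, it vanishes on every odd permutation (equivalently, on every class $\lambda$ with $\Vert\lambda\Vert$ odd). Property (c) is then immediate: if $\lambda$ has a part divisible by $3$ or $\Vert\lambda\Vert$ is odd, then $\rho_{\tau_n}(\lambda)=0$ for every admissible $n$, so $\tilde{P}_\lambda=Q_{\supp(\lambda)}\cdot\rho_{\tau_n}(\lambda)$ is the zero polynomial. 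For the remaining properties I may therefore assume that $\lambda$ has no part divisible by $3$ and that $\Vert\lambda\Vert$ is even.

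For (a) I would run induction on $\lambda$ in the ordering of \autoref{part_ordering}. Writing $\overline{\lambda}=(\lambda_1^{a_1},\dots,\lambda_r^{a_r-1},\lambda_r-1)$ and applying \autoref{general_poly_rel_thm} to $F_{(2),\overline{\lambda}}$, the crucial point is that $\rho_{\tau_n}((2))=0$ by self-conjugacy, so the left-hand side vanishes and we obtain $0=\sum_{x\in(2)}\rho_{\tau_n}(xy)$ exactly as in \eqref{staircase_poly_recurrence}. Expanding the sum over the classes of $xy$, the class $\lambda$ occurs with coefficient $(\lambda_r-1)(n-\supp(\lambda)+1)$, the ``merge'' classes $\alpha_i$ and ``split'' classes $\beta_j$ occur just as in that proof, and every class having a part divisible by $3$ drops out because $\rho_{\tau_n}$ kills it. Solving for $\rho_{\tau_n}(\lambda)$, multiplying through by $Q_{\supp(\lambda)-1}(n)$, and absorbing the factor $(n-\supp(\lambda)+1)$ into $Q_{\supp(\lambda)}(n)=Q_{\supp(\lambda)-1}(n)(n-\supp(\lambda)+1)$ yields a polynomial identity defining $\tilde{P}_\lambda$, the analogue of \eqref{staircase_P_recurrence}.

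The degree bound (d) then falls out of this recurrence by a degree count. Each surviving class $\sigma$ satisfies $\Vert\sigma\Vert\le\Vert\lambda\Vert$ (equality for merges, $\Vert\lambda\Vert-2$ for splits); the connection coefficient $|\{x\in(2):xy\in\sigma\}|$ has $n$-degree at most $\supp(\sigma)-\supp(\lambda)+1$ by the estimate used in the proof of \autoref{char_from_cycles}; and $Q_{\supp(\lambda)-1}/Q_{\supp(\sigma)}$ has degree $\supp(\lambda)-1-\supp(\sigma)$. These three contributions add up to $\Vert\sigma\Vert$ in each summand, giving $\deg\tilde{P}_\lambda\le\Vert\lambda\Vert$. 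For (b), the support condition $n<\supp(\lambda)$ is handled exactly as the cases $n<s-1$ and $n=s-1$ in \autoref{staircase_poly_existence}: for $n<\supp(\lambda)-1$ every $\tilde{P}_\sigma(n)$ vanishes by induction, while at $n=\supp(\lambda)-1$ the coefficient $(\lambda_r-1)(n-\supp(\lambda)+1)$ of $\rho_{\tau_n}(\lambda)$ vanishes and the identity $0=\sum_{x\in(2)}\rho_{\tau_n}(xy)$ holds genuinely in $S_{\supp(\lambda)-1}$. The rim-hook conditions $6k-5<\lambda_1$ (for $n=k(3k-2)$) and $6k-1<\lambda_1$ (for $n=k(3k+2)$) I would settle with the Murnaghan--Nakayama rule: for these self-conjugate diagrams the longest hook, attached to the corner cell, has length $2(3k-2)-1=6k-5$ and $2\cdot 3k-1=6k-1$ respectively, so when the largest part $\lambda_1$ of the cycle type exceeds this length there is no rim hook of size $\lambda_1$ to remove, $\tau_n(\lambda)=0$, and hence $\tilde{P}_\lambda(n)=0$.

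The step I expect to be most delicate is (b), specifically verifying the boundary case $n=\supp(\lambda)-1$, where one must force $\tilde{P}_\lambda(n)=0$ from the polynomial recurrence itself rather than from an actual character of $S_{\supp(\lambda)-1}$, since that group need not carry a $3$-defect zero self-conjugate character; and double-checking that the two hook-length bounds $6k-5$ and $6k-1$ match the two families of diagrams. A secondary nuisance, absent in the $2$-defect case where every admissible part was odd, is that the admissible parts are now those $\not\equiv 0\pmod 3$ (so both residues $1$ and $2$ occur), which complicates the bookkeeping of which merge and split classes survive; but since (d) asks only for an inequality and (b) only for divisibility, these coarser conclusions should be robust to the extra cases.
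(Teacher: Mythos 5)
There is a genuine gap in your main recurrence, and it is exactly the point where the $3$-defect case refuses to be a carbon copy of the staircase case. In the proof of \autoref{staircase_poly_existence}, the reason the expansion of $0=\sum_{x\in(2)}\rho(xy)$ closes up on $\lambda$, the merges $\alpha_i$ and the splits $\beta_j$ is that every product $xy$ in which $x$ does \emph{not} touch the distinguished cycle of length $\lambda_r-1$ retains that even part and is therefore killed by $\rho_k$. For $\tau_n$ the kill conditions are different (a part divisible by $3$, or odd $\Vert\cdot\Vert$), and neither of them does this job: since $\Vert\overline{\lambda}\Vert=\Vert\lambda\Vert-1$ is odd, \emph{every} product $xy$ has even $\Vert xy\Vert$, so the parity condition kills nothing in the expansion, and the distinguished part $\lambda_r-1$ is divisible by $3$ only when $\lambda_r\equiv 1 \pmod 3$. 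Concretely, the class $\mu=(\lambda_1^{a_1},\dots,\lambda_r^{a_r-1},\lambda_r-1,2)$, arising when $x$ is a transposition of two fixed points of $y$, survives whenever $3\nmid(\lambda_r-1)$; and in the ordering of \autoref{part_ordering} it satisfies $\mu>\lambda$ (at the first difference $\mu$ has the \emph{smaller} part $\lambda_r-1<\lambda_r$, which makes it larger). So your recurrence contains a second unknown not covered by the induction hypothesis. For instance, with $\lambda=(5,5)$ your identity reads $0=c_1\rho_{\tau_n}((5,5))+c_2\rho_{\tau_n}((5,4,2))+(\text{terms known by induction})$ with $c_1,c_2\neq 0$, and $(5,4,2)$ is above $(5,5)$ in the ordering; the inductive construction of $\tilde{P}_\lambda$ collapses. (A smaller slip: your coefficient $(\lambda_r-1)(n-\supp(\lambda)+1)$ for the class $\lambda$ is also wrong when $\lambda_r=2$, where the correct count is $\binom{n-\supp(\lambda)+2}{2}$, since ``extending a cycle of length $1$'' means joining two fixed points.)

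The paper circumvents exactly this obstruction by a two-case construction that you do not have. If $\lambda$ contains a part of size $2$, one takes $\overline{\lambda}$ to be $\lambda$ with one part of size $2$ deleted and uses $F_{(2),\overline{\lambda}}$; then joining two fixed points recreates $\lambda$ itself, and $\lambda$ is genuinely the largest class appearing in the expansion (merging or extending cycles produces larger parts, hence smaller classes). If $\lambda$ has no part of size $2$, one instead reduces the smallest nontrivial part by $2$ and applies \autoref{general_poly_rel_thm} to $F_{(3),\overline{\lambda}}$, exploiting the second vanishing property $\rho_{\tau_n}((3))=0$ coming from $3$-defect zero --- not merely $\rho_{\tau_n}((2))=0$ as in your sketch --- and verifying that $\lambda$ is the largest class with no part divisible by $3$ among the products of a $3$-cycle with $\overline{\lambda}$. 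Your proposal never invokes $F_{(3),\cdot}$ at all, which is the essential new ingredient; without it the claim that ``the merge classes $\alpha_i$ and split classes $\beta_j$ occur just as in that proof'' is false, and the difficulty is not the benign bookkeeping issue you flagged but a failure of the induction itself. Your treatment of the hook-length bounds $6k-5$ and $6k-1$ in (b) and the degree count in (d) are fine in spirit, but they sit on top of a recurrence that does not exist as stated.
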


\begin{proof}
We argue as in the proof of \autoref{staircase_poly_existence}. So assume that the statement holds for all partitions less than $\lambda=(\lambda_1^{a_1},\dots,\lambda_r^{a_r})$ in the ordering as in \autoref{part_ordering}. Also assume that $3\nmid o(\lambda)$ and $2\mid \Vert \lambda\Vert$, so that it has no part of size divisible by $3$, and the number of parts of even size is even.

If $\lambda$ has a part of size $2$, then $t_{\lambda}$ is the largest variable appearing in $F_{(2),\overline{\lambda}}$ where $\overline{\lambda}$ is the partition obtained by removing one part of size $2$ from $\lambda$. Therefore, by \autoref{general_poly_rel_thm}, for any $y\in \overline{\lambda}$ and any $n\geq \supp(\overline{\lambda}) = \supp(\lambda)-2$ of the form $n=k(3k\pm 2)$ for some $k\in \mathbb{Z}_{>0}$,
\begin{align*}
0 =& \sum_{x\in (2)}\rho_{\tau_n}(xy) = \sum_{\substack{\sigma<\lambda\\ 3\nmid o(\sigma)}}\left\lvert\{x\in (2)\mid xy\in \sigma\}\right\rvert\rho_{\tau_n}(\sigma) + |\{x\in (2)\mid xy\in \lambda\}|\rho_{\tau_n}(\lambda).
\end{align*}
Note that $|\{x\in (2)\mid xy\in \lambda\}| = (n-\supp(\lambda)+2)(n-\supp(\lambda)+1)/2$. Also, suppose that $\sigma<\lambda$ can be obtained as the class of $xy$ for some $x\in (2)$. Then $\supp(\lambda)-\supp(\sigma)\in \{1,2,3,4\}$. It is $1$ if $\sigma$ is obtained by increasing the size of a part $\lambda_i$ of $\overline{\lambda}$; in this case, $|\{x\in (2)\mid xy\in \sigma\}|=(a_i-\delta_{i,r})\lambda_i(n-\supp(\lambda)+2)$. $\supp(\lambda)-\supp(\sigma)=2$ if $\sigma$ is obtained by merging two parts of $\overline{\lambda}$ into one or splitting one part of $\overline{\lambda}$ into two parts of sizes at least $2$; in this case $|\{x\in (2)\mid xy\in \sigma\}|=$ 
\begin{itemize}
\item $a_i(a_j-\delta_{j,r})\lambda_i\lambda_j$ if we merged two parts of sizes $\lambda_i>\lambda_j$, 
\item $\frac{(a_i-\delta_{i,r})(a_i-1-\delta_{i,r})}{2}\lambda_i^2$ if we merged two parts of same sizes $\lambda_i$, 
\item $a_i\lambda_i$ if we split a part of size $\lambda_i$ into two parts of different sizes, and
\item $\frac{a_i}{2}\lambda_i$ if we split a part of size $\lambda_i$ into two parts of different sizes.
\end{itemize}
$\supp(\lambda)-\supp(\sigma)=3$ if $\sigma$ is obtained by reducing the size of a part of $\overline{\lambda}$ of size larger than $2$ by one; in this case $|\{x\in (2)\mid xy\in \sigma\}|=a_i\lambda_i$. Finally, $\supp(\lambda)-\supp(\sigma)=4$ if $\sigma$ is obtained by removing a part of $\overline{\lambda}$ of size $2$; in this case $|\{x\in (2)\mid xy\in \sigma\}|=a_r-1$. Therefore, 
{\allowdisplaybreaks \begin{align*}
&Q_{\supp(\lambda)}(n)\rho_{\tau_n}(\lambda) \\=&-2Q_{\supp(\lambda)-2}(n)\sum_{\substack{\sigma<\lambda\\ 3\nmid o(\sigma)}}\left\lvert\{x\in (2)\mid xy\in \sigma\}\right\rvert\rho_{\tau_n}(\sigma)\\=&-2\sum_{\substack{\sigma<\lambda\\ 3\nmid o(\sigma)\\\supp(\sigma)=\supp(\lambda)-1}}(\text{positive integer})(n-\supp(\lambda)+2)Q_{\supp(\lambda)-2}(n)\rho_{\tau_n}(\sigma) \\& -2\sum_{\substack{\sigma<\lambda\\ 3\nmid o(\sigma)\\\supp(\sigma)\leq \supp(\lambda)-2}}(\text{positive integer})Q_{\supp(\lambda)-2}(n)\rho_{\tau_n}(\sigma) \\=&-2\sum_{\substack{\sigma<\lambda\\ 3\nmid o(\sigma)\\\supp(\sigma)=\supp(\lambda)-1}}(\text{positive integer})\tilde{P}_\sigma(n) \\& -2\sum_{\substack{\sigma<\lambda\\ 3\nmid o(\sigma)\\\supp(\sigma)\leq \supp(\lambda)-2}}(\text{positive integer})\left(\prod_{i=3}^{\supp(\lambda)-\supp(\sigma)}(n-\supp(\lambda)+i)\right)\tilde{P}_\sigma(n).
\end{align*}}
We can take the right-hand side with $n$ replaced by $x$ as the polynomial $\tilde{P}_{\lambda}(x)$. 

By the induction hypothesis applied to each term on the right-hand side, $\tilde{P}_\lambda(n)=0$ if $n=k(3k\pm 2)$ for some $k\in \mathbb{Z}_{>0}$ and $n<\supp(\lambda)-2$. Also, if $n=k(3k\pm 2)$ equals $\supp(\lambda)-1$ or $\supp(\lambda)-2$, the above equality still holds and the left-hand side is $0$, so $\tilde{P}_\lambda(n)=0$ for these $n$. Also, $$\deg\tilde{P}_\lambda\leq \max\{\deg(\tilde{P}_\sigma) + \max(0, \supp(\lambda)-\supp(\sigma)-2) \mid \sigma<\lambda, \supp(\sigma)<\supp(\lambda)\}.$$ If $xy\in \sigma$ for some $x\in(2)$ and $\Vert\sigma\Vert = \Vert\lambda\Vert$, then $\supp(\sigma)\geq \supp(\lambda)-1$. If we instead have $\Vert\sigma\Vert<\Vert\lambda\Vert$, then $\Vert\sigma\Vert = \Vert\lambda\Vert-2$. Therefore, by the induction hypothesis, we get $$\deg\tilde{P}_\lambda\leq \max\{\Vert\sigma\Vert + \Vert\lambda\Vert - \Vert\sigma\Vert\mid \sigma<\lambda, \supp(\sigma)<\supp(\lambda)\} = \Vert\lambda\Vert.$$

If $\lambda$ has no part of size $2$, then define $\overline{\lambda}$ as the partition obtained by choosing a part of $\lambda$ of smallest size larger than $1$ and reducing its size by $2$. Then $\lambda$ is the largest partition without any part of size divisible by $3$ among the partitions whose corresponding variable appears in $F_{(3), \overline{\lambda}}$. By \autoref{general_poly_rel_thm}, for any $y\in\overline{\lambda}$,
\begin{equation*}
0 = \sum_{x\in (3)}\rho_{\tau_n}(xy) = \sum_{\substack{\sigma<\lambda\\ 3\nmid o(\sigma)}}\left\lvert\{x\in (3)\mid xy\in \sigma\}\right\rvert\rho_{\tau_n}(\sigma) + |\{x\in (3)\mid xy\in \lambda\}|\rho_{\tau_n}(\lambda)
\end{equation*}
As above, for any $n$ of the form $k(3k\pm 2)$ with $n\geq \supp(\lambda)-3$, we have the equality
\begin{align*}
&Q_{\supp(\lambda)}(n)\rho_{\tau_n(\lambda)} \\=& -3\sum_{\substack{\sigma<\lambda\\3\nmid o(\sigma)\\\supp(\sigma)\geq \supp(\lambda)-2}}(\text{positive integer})\tilde{P}_\sigma(n) \\=& -3\sum_{\substack{\sigma<\lambda\\3\nmid o(\sigma)\\\supp(\sigma)= \supp(\lambda)-3}}(\text{positive integer})(n-\supp(\lambda)+3)\tilde{P}_\sigma(n) \\&-3\sum_{\substack{\sigma<\lambda\\3\nmid o(\sigma)\\3\leq\supp(\lambda)-\supp(\sigma)\leq 5}}(\text{positive integer})\left(\prod_{i=4}^{\supp(\lambda)-\supp(\sigma)}(n-\supp(\lambda)+i)\right)\tilde{P}_\sigma(n).
\end{align*}
We may choose the right-hand side with $n$ replaced by $x$ as the polynomial $\tilde{P}_\lambda(x)$. By the induction hypothesis applied to each term on the right-hand side, we get $\tilde{P}_{\lambda}(n)=0$ if $n=k(3k\pm 2)$ for some $k\in \mathbb{Z}_{>0}$ and $n<\supp(\lambda)-3$. If $n=k(3k\pm 2)$ and $\supp(\lambda)-3\leq n\leq \supp(\lambda)-1$, then the above equality holds with the left-hand side being $0$, so $\tilde{P}_\lambda(n)=0$ for these $n$. Also, $$\deg\tilde{P}_\lambda\leq \max\{\deg\tilde{P}_\sigma + \max(0, \supp(\lambda)-\supp(\sigma)-3, \delta_{3, \supp(\lambda)-\supp(\sigma)}) \mid \sigma<\lambda, \supp(\sigma)\leq\supp(\lambda)\}.$$ Again, we can easily check that $\max(0, \supp(\lambda)-\supp(\sigma)-3, \delta_{3, \supp(\lambda)-\supp(\sigma)})\leq \Vert\lambda\Vert-\Vert\sigma\Vert$. By the induction hypothesis we get $\deg\tilde{P}_\lambda\leq \Vert\lambda\Vert$. Therefore, regardless of whether $\lambda$ has a part of size $2$ or not, we have (a)-(d).
\end{proof}

A deeper analysis of the above situations might yield an exact formula for the degree and leading coefficient of $\tilde{P}_\lambda$, and a reasonably simple expression of $\tilde{P}_{(r)}$ for each cycle $(r)$ similar to \eqref{staircase_cycle}. We decided not to do it here, as the argument is expected to be very lengthy and cumbersome. 

\begin{qst}\label{Final_Questions}
(1) Is there a simple formula for $P_\lambda$ and $\tilde{P}_\lambda$ for general $\lambda$? More generally, is there a simple expression of $T_\lambda$ defined in \autoref{char_from_cycles}?\\
(2) Can we find a formula or at least an upper bound for the coefficients of $P_\lambda$, $\tilde{P}_\lambda$ or $T_\lambda$? 
\end{qst}

A possible application of these questions, if the answers are good enough, is the Tensor Square conjecture. Heide, Saxl, Tiep and Zalesski \cite{HSTZ} proved that for every finite simple groups of Lie type except $\PSU_n(q)$ with $n$ coprime to $2(q+1)$, every complex irreducible character appears as an irreducible constitute of the tensor square of the Steinberg character. Based on this result, they conjectured that the alternating groups also have a complex irreducible character with the same property, namely its tensor square has every irreducible character as an irreducible constitute. One of the authors, Jan Saxl, conjectured that the staircase (2-defect zero) characters $\psi_k$ have this property; this special case is sometimes called the Saxl conjecture. 

Since the multiplicity of an irreducible constitute can be computed using the inner product of characters, if we have a good estimate of character values, we might be able to use it to bound the inner product $[\chi,\psi_k^2]$, or equivalently $[\rho_\chi,\rho_k^2]$, away from zero. As the values of $\rho_k^2$ are positive, it would be nice if we can find a good lower bound of $\{\rho_\chi(\lambda)\mid \chi\in\Irr(S_n)\}$ for each $\lambda$ (of odd order), in terms of $n$. The following are some easy examples of lower bounds, which do not seem to be good enough for our purpose.

\begin{prop}
For any $n$ and $\chi\in\Irr(S_n)$, $\rho_\chi((2^2))> -(4n-6)/(n-2)(n-3)$ and $\rho_\chi((3^2)) >\frac{36 n^5 - 324 n^4 + 1200 n^3 - 1791 n^2 - 120 n + 3600}{4n(n-1)(n-2)(n-3)(n-4)(n-5)} $.
\end{prop}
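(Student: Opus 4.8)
The plan is to extract two vanishing relations from \autoref{example_trans_4}. By \autoref{char_from_cycles}, setting $t_\lambda=\rho_\chi(\lambda)$ makes every polynomial $T_\mu$ vanish, so both $T_{(2^2)}=0$ and $T_{(3^2)}=0$ hold for every $\chi\in\Irr(S_n)$. I would combine these identities with two elementary facts: every real number has nonnegative square, and $|\rho_\chi(\lambda)|\le 1$ for each class $\lambda$, since $\rho_\chi(\lambda)$ is an average of $\chi(1)$ roots of unity.

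For $(2^2)$, so $n\ge 4$, solving $T_{(2^2)}=0$ gives
$$\rho_\chi((2^2))=\frac{n(n-1)\rho_\chi((2))^2}{(n-2)(n-3)}-\frac{4\rho_\chi((3))}{n-3}-\frac{2}{(n-2)(n-3)}.$$
The first summand is nonnegative, and $\rho_\chi((3))\le 1$ bounds the middle summand above by $4/(n-3)$; collecting the last two terms over $(n-2)(n-3)$ yields exactly $-(4n-6)/((n-2)(n-3))$. The inequality is strict because equality would force $\rho_\chi((2))=0$ and $\rho_\chi((3))=1$ simultaneously; but $\rho_\chi((3))=1$ places a $3$-cycle in $\ker\chi$, whose normal closure is $A_n$, so $\chi$ is the trivial or the sign character, for which $\rho_\chi((2))=\pm1\neq0$.

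For $(3^2)$, so $n\ge 6$, solving $T_{(3^2)}=0$ gives
$$\rho_\chi((3^2))=-\frac{9\rho_\chi((5))}{n-5}+\frac{n(n-1)(n-2)\rho_\chi((3))^2-(9n-60)\rho_\chi((3))}{(n-3)(n-4)(n-5)}-\frac{9n(n-1)\rho_\chi((2))^2+3(n-8)}{(n-2)(n-3)(n-4)(n-5)}.$$
Here I would bound the first term below using $\rho_\chi((5))\le1$, bound $\rho_\chi((2))^2\le1$ in the last term, and --- the key step --- complete the square in the middle term: since $n(n-1)(n-2)>0$, the quadratic $n(n-1)(n-2)x^2-(9n-60)x$ attains its minimum $-(9n-60)^2/\bigl(4n(n-1)(n-2)\bigr)$ over all real $x$, so the middle term is at least $-(9n-60)^2/\bigl(4n(n-1)(n-2)(n-3)(n-4)(n-5)\bigr)$. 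Placing the three resulting bounds over the common denominator $4n(n-1)(n-2)(n-3)(n-4)(n-5)$ and collecting the numerator gives the asserted rational function. Strictness follows as before: equality would require $\rho_\chi((5))=1$, again forcing $\chi$ to be the trivial or the sign character and hence $\rho_\chi((3))=1$, which is not the minimizer $(9n-60)/\bigl(2n(n-1)(n-2)\bigr)$ for $n\ge6$, so the middle bound is strict.

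The only nonroutine point is the simultaneous linear-and-quadratic appearance of $\rho_\chi((3))$ in the $(3^2)$ relation; completing the square disposes of it cleanly, and the role of $|\rho_\chi(\lambda)|\le 1$ is to turn the sign-indefinite contributions of $\rho_\chi((5))$ and $\rho_\chi((2))$ into usable one-sided estimates. Everything else is bookkeeping with rational functions of $n$, the main labor being the mechanical verification that the collected numerator agrees with the displayed polynomial over the common denominator.
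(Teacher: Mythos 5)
Your proposal is correct and follows essentially the same route as the paper: solve $T_{(2^2)}=0$ and $T_{(3^2)}=0$ for the target ratio, use $\rho_\chi((3))\le 1$, $\rho_\chi((5))\le 1$, $\rho_\chi((2))^2\le 1$ together with nonnegativity of squares, and complete the square in $\rho_\chi((3))$ for the middle term; your kernel/normal-closure strictness argument is in fact slightly more careful than the paper's. One caveat: what both you and the paper's own proof actually derive is the lower bound $-\frac{36 n^5 - 324 n^4 + 1200 n^3 - 1791 n^2 - 120 n + 3600}{4n(n-1)(n-2)(n-3)(n-4)(n-5)}$ (with a leading minus sign), so the rational function displayed in the proposition statement is missing that sign — a typo in the paper, not a flaw in your argument.
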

\begin{proof}
Recall that
$$T_{(2^2)} = t_{(2^2)} + \frac{4t_{(3)}}{n - 3} + \frac{-n(n-1)t_{(2)}^2}{(n-2)(n-3)} + \frac{2}{(n-2)(n-3)}.$$
Since $\rho_\chi(\lambda)\leq 1$ for all $\lambda$, and $\rho_\chi((3))=1$ if and only if $\rho_\chi((2^2))=1$, we get $$\rho_\chi((2^2)) = - \frac{4\rho_\chi((3))}{n - 3} - \frac{-n(n-1)\rho_\chi((2))^2}{(n-2)(n-3)} - \frac{2}{(n-2)(n-3)}>- \frac{4(n-2)+2}{(n-2)(n-3)}.$$ Similarly, 
\begin{align*}
T_{(3^2)} :=& t_{(3^2)} + \frac{9t_{(5)}}{n - 5} + \frac{-n(n-1)(n-2)t_{(3)}^2}{(n-3)(n-4)(n-5)} + \frac{(9n - 60)t_{(3)} }{(n-3)(n-4)(n-5)}\\&+ \frac{9n(n-1)t_{(2)}^2}{(n-2)(n-3)(n-4)(n-5)} + \frac{3(n - 8)}{(n-2)(n-3)(n-4)(n-5)}
\end{align*}
so we get 
\begin{align*}
\rho_\chi((3^2)) &>- \frac{9}{n-5} + \frac{n(n-1)(n-2)t_{(3)}^2 - (9n-60)t_{(3)}}{(n-3)(n-4)(n-5)} - \frac{9n+12}{(n-3)(n-4)(n-5)} \\&\geq -\frac{9n^2- 54n + 120}{(n-3)(n-4)(n-5)} -\frac{(9n-60)^2}{4n(n-1)(n-2)(n-3)(n-4)(n-5)}\\&= -\frac{36 n^5 - 324 n^4 + 1200 n^3 - 1791 n^2 - 120 n + 3600}{4n(n-1)(n-2)(n-3)(n-4)(n-5)} .\qedhere
\end{align*}
\end{proof}

\section{Realizing polynomials as values of characters}
It is natural to ask if there is any (polynomial) relation, other than \eqref{symm_poly_rel} and their variants including those given in \autoref{char_from_cycles}, between the values of complex irreducible characters of $S_n$ that works for all $n$ and all $\chi\in\Irr(S_n)$. The following computations suggest that the answer might be negative.

\begin{prop}
Let $k, a_1,\dots,a_r,b_1,\dots,b_r,c_1\dots,c_r\in \mathbb{Z}_{\geq 0}$, $a_1>a_2>\dots>a_r>0$, and suppose that $k$ is sufficiently large. Let $n=\left(\sum_{i=1}^{r}2a_ik+b_i+c_i\right)-r^2$ and $$\lambda=(a_1k+b_1, a_2k+b_2,\dots,a_rk+b_r,r^{a_rk+c_r-r},r-1^{(a_{r-1}-a_r)k+c_{r-1}-c_r},\dots,1^{(a_1-a_2)k+c_1-c_2})$$be a partition of $n$ (so the first $r$ columns have sizes $a_ik+c_i$). Let $A = \sum_{i=1}^{r}a_i$, $B= \sum_{i=1}^{r}a_i(b_i-c_i)$, $C = \sum_{i=1}^{r}(-1)^{i+1}(b_i-c_i)$, $D=\sum_{i=1}^{r}(2i-1)(b_i-c_i)$, $E=\sum_{i=1}^{r}b_i^2-c_i^2$, and $F=\left(\sum_{i=1}^{r}b_i+c_i\right)-r^2 = n-2Ak$. Then 
\begin{align*}\frac{n(n-1)}{2}\frac{\chi_{\lambda}((2))}{\chi_{\lambda}(1)} = \frac{B}{2A}n + \frac{E-D}{2} - \frac{BF}{2A}.
\end{align*}
\end{prop}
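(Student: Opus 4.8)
The plan is to reduce the statement to the classical content formula for the value of an irreducible character at a transposition, and then carry out an explicit combinatorial evaluation of the resulting content sum for the given diagram. First I would invoke the fact (due to Frobenius; it can also be recovered from \autoref{general_poly_rel_thm} applied to $F_{(2),y}$ together with the Murnaghan--Nakayama rule) that
\[
\frac{n(n-1)}{2}\frac{\chi_\lambda((2))}{\chi_\lambda(1)} = \sum_{(i,j)\in\lambda}(j-i),
\]
where $(i,j)$ ranges over the cells of the Young diagram of $\lambda$ (row $i$, column $j$) and $j-i$ is the content. I would then rewrite this content sum in terms of row and column lengths,
\[
\sum_{(i,j)\in\lambda}(j-i) = \frac{1}{2}\left(\sum_i \lambda_i^2 - \sum_j (\lambda'_j)^2\right),
\]
where $\lambda'$ is the conjugate partition; this follows from the identity $\sum_i (i-1)\lambda_i = \sum_j \binom{\lambda'_j}{2}$ and a one-line manipulation. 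This turns the whole problem into evaluating two sums of squares of part sizes.

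The bulk of the work is to compute these two sums for the explicit $\lambda$. For the rows I would split $\lambda$ into the $r$ long rows $L_i := a_i k + b_i$ and the staircase part, whose part of size $m$ occurs with multiplicity $(a_m-a_{m+1})k + c_m - c_{m+1}$ for $m<r$ and $a_r k + c_r - r$ for $m=r$. An Abel-summation (telescoping) computation then gives the staircase contribution to $\sum_i \lambda_i^2$ as $k\sum_{m=1}^r (2m-1)a_m + \sum_{m=1}^r (2m-1)c_m - r^3$. For the columns I would use the observation recorded in the statement, that the first $r$ columns have heights $\lambda'_j = a_j k + c_j$, while every column of index $>r$ is fed only by the long rows; hence $\sum_{j>r}(\lambda'_j)^2 = \sum_{i=1}^r i^2 (L_i - L_{i+1})$ with the convention $L_{r+1} := r$, and the same telescoping identity turns this into $\sum_{i=1}^r (2i-1)L_i - r^3$.

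Finally I would form the difference $\sum_i \lambda_i^2 - \sum_j (\lambda'_j)^2$ and track the cancellations: the two $-r^3$ terms cancel, and the term $k\sum_i (2i-1)a_i$ produced by the staircase rows cancels exactly against the $k$-part of $\sum_i (2i-1)L_i$ produced by the long columns. Grouping the remaining terms via $(a_ik+b_i)^2 - (a_ik+c_i)^2 = (2a_ik+b_i+c_i)(b_i-c_i)$ collapses everything to $2kB + E - D$, so the content sum equals $kB + (E-D)/2$. Substituting $k = (n-F)/(2A)$, which is just the defining relation $n = 2Ak + F$, yields $\tfrac{B}{2A}n - \tfrac{BF}{2A} + \tfrac{E-D}{2}$, exactly as claimed. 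The main obstacle is purely organizational: keeping the two telescoping computations and the multiplicity bookkeeping consistent, and checking that $\lambda$ is genuinely a partition for large $k$ (the strict inequalities $L_1 > \cdots > L_r > r$ and the nonnegativity of all multiplicities) so that the column-height description is legitimate. Note that the auxiliary quantity $C$ does not enter this particular identity.
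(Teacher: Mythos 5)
Your proof is correct, and it takes a genuinely different route from the paper's. The paper proves the identity by applying the Murnaghan--Nakayama rule to write $\chi_\lambda((2))$ as a signed sum of the degrees of the partitions obtained by removing the $2r$ rim hooks of length $2$ (one at the end of each long row and each long column), and then evaluates each ratio $\chi_\mu(1)/\chi_\lambda(1)$ with the hook length formula; the resulting rational-function cancellation is only written out for $r=2$, with the general case asserted to be similar. You instead invoke the classical Frobenius content formula $\tfrac{n(n-1)}{2}\chi_\lambda((2))/\chi_\lambda(1)=\sum_{(i,j)\in\lambda}(j-i)=\tfrac12\bigl(\sum_i\lambda_i^2-\sum_j(\lambda_j')^2\bigr)$, which reduces everything to two telescoping sums over row and column lengths. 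Your bookkeeping is right: the staircase rows contribute $k\sum_m(2m-1)a_m+\sum_m(2m-1)c_m-r^3$, the columns of index $>r$ contribute $\sum_i(2i-1)(a_ik+b_i)-r^3$, the $r^3$ and $k\sum(2i-1)a_i$ terms cancel, and the remainder collapses to $2kB+E-D$ via $(a_ik+b_i)^2-(a_ik+c_i)^2=(2a_ik+b_i+c_i)(b_i-c_i)$; substituting $k=(n-F)/(2A)$ gives exactly the stated expression (this also agrees with the paper's intermediate formula $\bigl(2kB+E-D\bigr)/n(n-1)$ for $r=2$). What your approach buys is uniformity: the content formula handles all $r$ at once with no hook-length ratio manipulations, so nothing is left as ``similar.'' What the paper's approach buys is that the same technique (rim hook removal plus hook lengths) extends directly to other small classes, e.g.\ the computation of $\chi_\lambda((3))$ in \autoref{two_hooks_3}, whereas the content formula is specific to transpositions. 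Your closing remarks on legitimacy --- that for large $k$ the multiplicities are nonnegative, the long rows are strictly decreasing and exceed $r$, and the first $r$ column heights are $a_jk+c_j$ --- are exactly the checks needed, and the observation that $C$ plays no role in this identity is consistent with the paper.
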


\begin{proof}
We prove this for the case $r=2$; the general case can be proved similarly. In the Young diagram of shape $\lambda$, there are exactly $4$ rim hooks of length $2$, at the end of the first row, the second row, the first column, and the second column. Let $\lambda_{r1}$, $\lambda_{r2}$, $\lambda_{c1}$, and $\lambda_{c2}$ be the partitions obtained by removing each of these rim hooks of length $2$. By Murnaghan-Nakayama rule, we get: 
\begin{align*}
\chi_\lambda((2)) =& \chi_{\lambda_{r1}}(1) + \chi_{\lambda_{r2}}(1) - \chi_{\lambda_{c1}}(1) - \chi_{\lambda_{c2}}(1)
\end{align*}
The hook length formula tells us that if $\mu$ is a partition of the form $(\mu_1, \mu_2, 2^{\mu_3-2},1^{\mu_4-\mu_3})$, then 
\begin{align*}
\chi_\mu(1) = \frac{(\mu_1+\mu_2+\mu_3+\mu_4-4)!}{\frac{(\mu_1-1)!}{\mu_1-\mu_2+1} \frac{(\mu_4-1)!}{\mu_4-\mu_3+1}(\mu_2-2)!(\mu_3-2)!(\mu_2+\mu_3-3)(\mu_1+\mu_3-2)(\mu_2+\mu_4-2)(\mu_1+\mu_4-1) }
\end{align*}
Hence we get
\begin{align*}
\frac{\chi_\lambda((2))}{\chi_\lambda(1)} =& \frac{\chi_{\lambda_{r1}}(1)}{\chi_\lambda(1)} + \frac{\chi_{\lambda_{r2}}(1)}{\chi_\lambda(1)} - \frac{\chi_{\lambda_{c1}}(1)}{\chi_\lambda(1)} - \frac{\chi_{\lambda_{c2}}(1)}{\chi_\lambda(1)}\\=& \frac{(a_1k+b_1-1)(a_1k+b_1-2)(a_1k+b_1+a_2k+c_2-2)(a_1k+b_1+a_1k+c_1-1)}{n(n-1)\frac{a_1k+b_1-a_2k-b_1+1}{a_1k+b_1-a_2k-b_2-1}(a_1k+b_1+a_2k+c_2-4)(a_1k+b_1+a_1k+c_1-3)} \\&+\frac{(a_2k+b_2-2)(a_2k+b_2-3)(a_2k+b_2+a_2k+c_2-3)(a_2k+b_2+a_1k+c_1-2)}{n(n-1)\frac{a_1k+b_1-a_2k-b_2+1}{a_1k+b_1-a_2k-b_2+3} (a_2k+b_2+a_2k+c_2-5)(a_2k+b_2+a_1k+c_1-4)} \\&-\frac{(a_2k+c_2-2)(a_2k+c_2-3)(a_2k+b_2+a_2k+c_2-3)(a_1k+b_1+a_2k+c_2-2)}{n(n-1)\frac{a_1k+c_1-a_2k-c_2+1}{a_1k+c_1-a_2k-c_2+3}(a_2k+b_2+a_2k+c_2-5)(a_1k+b_1+a_2k+c_2-4)} \\&-\frac{(a_1k+c_1-1)(a_1k+c_1-2)(a_1k+c_1+a_2k+b_2-2)(a_1k+b_1+a_1k+c_1-1)}{n(n-1)\frac{a_1k+c_1-a_2k-c_2+1}{a_1k+c_1-a_2k-c_2-1}(a_2k+b_2+a_1k+c_1-4)(a_1k+b_2+a_1k+c_1-3)} \\=&\frac{2k(a_1(b_1 -c_1) + a_2(b_2 - c_2)) + b_1^2 - b_1 + b_2^2 - 3b_2 - c_2^2 + 3c_2 - c_1^2 + c_1}{n(n-1)} \\=& \frac{2B\frac{n-F}{2A} + E-D}{n(n-1)} = \frac{2}{n(n-1)} \left(\frac{B}{2A}n + \frac{E-D}{2} - \frac{BF}{	2A}\right) .
\end{align*}
\end{proof}

Similarly we have the following.

\begin{prop}\label{two_hooks}
Let $a,b,c,d,e\in \mathbb{Z}_{\geq 0}$, and suppose that $a$ is sufficiently large. Let $n = 2a+b+c+d+e$, and let $\lambda= (a+b, 2+c, 2^d, 1^{a-d-2+e})$ be a partition of $n$. Then $$\frac{n(n-1)}{2}\frac{\chi_\lambda((2))}{\chi_\lambda(1)} = \frac{b-e}{2}n + \frac{(c+d+1)(-b+c-d+e)}{2}.$$
\end{prop}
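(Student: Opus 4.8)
The plan is to compute $\chi_\lambda((2))$ directly by the Murnaghan--Nakayama rule and then evaluate the resulting dimensions with the hook length formula, exactly as in the preceding proposition; the present shape again lies in the two-row/two-column family $(\mu_1,\mu_2,2^{\mu_3-2},1^{\mu_4-\mu_3})$, so the same two tools apply. The correct bookkeeping is to set $\mu_1=a+b$ (length of the first row), $\mu_2=2+c$ (length of the second row), $\mu_3=2+d$ (height of the second column) and $\mu_4=a+e$ (height of the first column), so that $\mu_1+\mu_2+\mu_3+\mu_4-4=n$ and $\chi_\lambda(1)$ is $n!$ divided by the denominator that appears in the hook length formula recorded above.

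First I would locate the removable border strips of length $2$. For $a$ large and the remaining parameters generic there are exactly four, one at the end of each of the first row, the second row, the first column and the second column, and removing each of them subtracts $2$ from exactly one of $\mu_1,\mu_2,\mu_3,\mu_4$. Since a transposition is odd, the horizontal (row) strips carry sign $+1$ and the vertical (column) strips carry sign $-1$, so the Murnaghan--Nakayama rule gives
\begin{align*}
\chi_\lambda((2))=\chi_{(\mu_1-2,\mu_2,\mu_3,\mu_4)}(1)+\chi_{(\mu_1,\mu_2-2,\mu_3,\mu_4)}(1)-\chi_{(\mu_1,\mu_2,\mu_3-2,\mu_4)}(1)-\chi_{(\mu_1,\mu_2,\mu_3,\mu_4-2)}(1).
\end{align*}
All four resulting partitions again belong to the family $(\mu_1,\mu_2,2^{\mu_3-2},1^{\mu_4-\mu_3})$, so each dimension is read off from the same formula.

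Next I would form the four ratios $\chi_\mu(1)/\chi_\lambda(1)$. Decreasing a single $\mu_i$ by $2$ changes only the factorial $(\mu_i-1)!$ (which, after dividing the two factorials $n!/(n-2)!$, produces the overall factor $1/(n(n-1))$ together with $(\mu_i-1)(\mu_i-2)$) and the three linear factors of the denominator that involve $\mu_i$; everything else cancels. Thus each ratio telescopes to a product of a bounded number of linear factors, for example
\begin{align*}
\frac{\chi_{(\mu_1-2,\mu_2,\mu_3,\mu_4)}(1)}{\chi_\lambda(1)}=\frac{(\mu_1-1)(\mu_1-2)}{n(n-1)}\cdot\frac{\mu_1-\mu_2-1}{\mu_1-\mu_2+1}\cdot\frac{\mu_1+\mu_3-2}{\mu_1+\mu_3-4}\cdot\frac{\mu_1+\mu_4-1}{\mu_1+\mu_4-3},
\end{align*}
with the other three obtained by the evident substitutions. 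I would exploit the conjugation symmetry $(\mu_1,\mu_2)\leftrightarrow(\mu_4,\mu_3)$, which corresponds to transposing the diagram: it interchanges $(b,c)\leftrightarrow(e,d)$ and negates both $\chi_\lambda((2))$ and the claimed right-hand side, so it suffices to treat the two row-terms and deduce the column-terms. Multiplying by $n(n-1)/2$, combining the four rational expressions over a common denominator, and re-expressing everything in $n$ (using $\mu_1+\mu_4=2a+b+e=n-c-d$ and similar identities) should collapse the sum to $\frac{b-e}{2}n+\frac{(c+d+1)(-b+c-d+e)}{2}$.

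The main obstacle is twofold. The heavier part is the final simplification: four ratios of shifted linear factors must be added and shown to have all their higher-degree-in-$n$ terms cancel, leaving a polynomial of degree one; I expect this to be routine but lengthy, with the conjugation symmetry and the normalization by the large parameter $a$ serving to keep it under control. The subtler point is the degenerate ranges of the parameters --- for instance $c\in\{0,1\}$ or small $d$ --- where the four border strips are no longer all of the clean horizontal/vertical type assumed above, so that the list of removable strips changes. Since, once $a$ is large, both sides are fixed rational (indeed polynomial) expressions in the integer parameters $b,c,d,e$, I would argue that the identity, once verified on the generic range, persists on the finitely many boundary cases either by direct inspection or by viewing it as a polynomial identity in $b,c,d,e$ that already holds on a Zariski-dense set of parameter values.
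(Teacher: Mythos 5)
Your proposal is correct and is essentially the paper's own approach: the paper proves the preceding proposition by exactly this Murnaghan--Nakayama plus hook-length-formula computation in the family $(\mu_1,\mu_2,2^{\mu_3-2},1^{\mu_4-\mu_3})$ and introduces the present statement only with ``Similarly we have the following,'' and your identifications $\mu_1=a+b$, $\mu_2=2+c$, $\mu_3=2+d$, $\mu_4=a+e$, the strip signs, and the sample dimension ratio all agree with the paper's formulas. Regarding the degenerate parameter ranges, the terms corresponding to the border strips that cease to exist when $c\leq 1$ or $d\leq 1$ carry the factors $(\mu_2-2)(\mu_2-3)$ resp.\ $(\mu_3-2)(\mu_3-3)$ and hence vanish identically, so the generic four-term expression is valid verbatim for all $b,c,d,e\in\mathbb{Z}_{\geq 0}$ once $a$ is large; this direct check is preferable to your Zariski-density fallback, which as stated is circular, since polynomiality of the character value across the degenerate locus is part of what must be proved.
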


In particular, by choosing $r=2$, $b=e$ and $d=c-1$, we can get $\frac{n(n-1)}{2}\frac{\chi_\lambda((2))}{\chi_\lambda(1)} = c$, and by choosing $c=d-1$ instead, we can get $\frac{n(n-1)}{2}\frac{\chi_\lambda((2))}{\chi_\lambda(1)} = -d.$ So there is no restriction on the number $\frac{n(n-1)}{2}\frac{\chi_\lambda((2))}{\chi_\lambda(1)}$.

We can also realize many polynomials of degree $2$:

\begin{prop}
Let $k, a_1,\dots,a_r, b_1,\dots,b_r\in \mathbb{Z}_{\geq 0}$, $a_1> \cdots > a_r > 0$, and $k$ is large enough compared to $b_1,\dots,b_r$. Let $n = (a_1+\cdots + a_r)k + b_1+\cdots + b_r$, and let $\lambda = (a_1k+b_1,a_2k+b_2, \dots, a_rk+b_r)$ be a partition of $n$. Let $$A_1 = \sum_{i=1}^{r}a_i,\ B=\sum_{i=1}^{r}b_i,\ C= \sum_{i=1}^{r}a_i^2,\ D = \sum_{i=1}^{r}a_ib_i,\ E=\sum_{i=1}^{r}(2i-1)a_i,\ F=\sum_{i=1}^{r}(2i-1)b_i, G=\sum_{i=1}^{r}b_i^2.$$ Then 
\begin{align*}
\frac{n(n-1)}{2}\frac{\chi_\lambda((2))}{\chi_\lambda(1)} =& \frac{C}{2A^2}n^2 + \left( \frac{2D - E}{2A} - \frac{BC}{A^2}\right)n + \frac{CB^2 - A\left(2D - E\right)B + A^2\left(G - F\right)}{2A^2}
\end{align*}
\end{prop}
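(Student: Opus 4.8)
The plan is to mirror the Murnaghan--Nakayama plus dimension-formula strategy used for \autoref{two_hooks}, specialized to the ``fat'' partition $\lambda=(a_1k+b_1,\dots,a_rk+b_r)$, which has $r$ distinct rows and no short columns. First I would determine the removable $2$-rim hooks. Working with the first-column hook lengths (beta-numbers) $\beta_i=\lambda_i+r-i$, removing a $2$-rim hook amounts to replacing some $\beta_i$ by $\beta_i-2$, provided $\beta_i-2\ge 0$ and $\beta_i-2$ is unoccupied, with leg length (hence sign) recorded by whether $\beta_i-1$ is occupied. Since the parts are distinct, $\beta_i-1$ is never a beta-number, so there are no vertical dominoes; and since $\lambda_i-\lambda_{i+1}=(a_i-a_{i+1})k+(b_i-b_{i+1})\ge 2$ and $\lambda_r\ge 2$ for $k$ large, each $\beta_i-2$ is unoccupied and nonnegative. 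Thus the only removable $2$-rim hooks are the $r$ horizontal dominoes at the row ends, each of leg length $0$ (sign $+1$), and Murnaghan--Nakayama gives $\chi_\lambda((2))=\sum_{i=1}^{r}\chi_{\lambda^{(i)}}(1)$, where $\lambda^{(i)}$ is $\lambda$ with two boxes deleted from row $i$.

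Next I would compute each dimension ratio. Using the hook length formula in its Frobenius/beta-number form $\chi_\lambda(1)=n!\prod_{j<l}(\beta_j-\beta_l)/\prod_j\beta_j!$ and replacing $\beta_i$ by $\beta_i-2$, the factorials contribute $\beta_i(\beta_i-1)$ while the Vandermonde factors contribute $\prod_{j\ne i}(\beta_i-\beta_j-2)/(\beta_i-\beta_j)$ (the cases $j<i$ and $j>i$ combine into one uniform expression), so that
\[
\frac{n(n-1)}{2}\,\frac{\chi_{\lambda^{(i)}}(1)}{\chi_\lambda(1)}=\frac{\beta_i(\beta_i-1)}{2}\prod_{j\ne i}\frac{\beta_i-\beta_j-2}{\beta_i-\beta_j}.
\]
Summing over $i$ produces $\tfrac{n(n-1)}{2}\,\chi_\lambda((2))/\chi_\lambda(1)$.

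The crucial step is to evaluate this sum in closed form. My plan is to recognize it as the sum of the contents of $\lambda$: the class sum of all transpositions is central and acts on the Specht module $S^\lambda$ by the scalar $\sum_{(i,j)\in\lambda}(j-i)$, which equals $\binom{n}{2}\chi_\lambda((2))/\chi_\lambda(1)$. (Alternatively one checks directly, by partial fractions, that the summed product above collapses to this content sum.) This gives
\[
\frac{n(n-1)}{2}\frac{\chi_\lambda((2))}{\chi_\lambda(1)}=\sum_{(i,j)\in\lambda}(j-i)=\frac12\sum_{i=1}^{r}\lambda_i\big(\lambda_i-(2i-1)\big).
\]
I would then substitute $\lambda_i=a_ik+b_i$ and collect powers of $k$, so that the $k^2$, $k^1$, $k^0$ coefficients become $C/2$, $D-E/2$, and $(G-F)/2$ with $A=\sum_i a_i$ and $B,C,D,E,F,G$ as defined. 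Since $n=Ak+B$, i.e. $k=(n-B)/A$, substituting and expanding converts this quadratic in $k$ into
\[
\frac{C}{2A^2}n^2+\Big(\frac{2D-E}{2A}-\frac{BC}{A^2}\Big)n+\frac{CB^2-A(2D-E)B+A^2(G-F)}{2A^2},
\]
which is exactly the asserted formula after clearing denominators to $2A^2$.

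I expect the main obstacle to be the closed-form evaluation of $\sum_i \frac{\beta_i(\beta_i-1)}{2}\prod_{j\ne i}\frac{\beta_i-\beta_j-2}{\beta_i-\beta_j}$. Once this sum is identified with the content sum---either via the central-character argument or by a direct telescoping/partial-fractions computation---the remaining work is the routine substitution of $\lambda_i=a_ik+b_i$ followed by $k=(n-B)/A$, together with the bookkeeping that matches the three coefficients. The genus-zero nature of the shape (distinct, widely separated parts) is what keeps the Murnaghan--Nakayama expansion down to the $r$ horizontal dominoes and makes the content sum the clean organizing principle.
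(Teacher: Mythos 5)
Your proposal is correct, but it proves the result by a genuinely different route than the paper. The paper's proof applies the Murnaghan--Nakayama rule to reduce $\chi_\lambda((2))$ to the $r$ degrees $\chi_{\lambda^{(i)}}(1)$, then grinds through the hook length formula explicitly in the case $r=2$ and asserts that the general case is similar. Your central step --- that the class sum of transpositions acts on the Specht module by the content sum, so that
\[
\frac{n(n-1)}{2}\frac{\chi_\lambda((2))}{\chi_\lambda(1)}=\sum_{(i,j)\in\lambda}(j-i)=\frac12\sum_{i=1}^{r}\lambda_i\bigl(\lambda_i-(2i-1)\bigr),
\]
is the classical Frobenius identity, valid for \emph{every} partition $\lambda$ of $n$; once it is invoked, your entire beta-number/Murnaghan--Nakayama analysis and the product formula $\sum_i\frac{\beta_i(\beta_i-1)}{2}\prod_{j\ne i}\frac{\beta_i-\beta_j-2}{\beta_i-\beta_j}$ become redundant (they are correct, but you never need to evaluate that sum, since the central-character argument bypasses it). The remaining substitution is right: with $\lambda_i=a_ik+b_i$ the content sum is $\tfrac12\bigl(Ck^2+(2D-E)k+(G-F)\bigr)$, and putting $k=(n-B)/A$ yields exactly the displayed quadratic in $n$. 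Your route buys uniformity in $r$ (no ``the general case is similar''), a much shorter computation, and the observation that the genericity hypotheses on $k$ and the $a_i$ serve only to make $\lambda$ a partition --- the identity itself is completely general; the paper's route buys self-containedness, using only the two tools (Murnaghan--Nakayama and the hook length formula) already deployed elsewhere in that section, which matters for the companion computations such as the one for $\chi_\lambda((3))$ where no equally clean central-character shortcut is available.
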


\begin{proof}
We prove this for the case $r=2$; the general case can be proved in a similar way. By Murnaghan-Nakayama rule and the hook length formula,
\begin{align*}
\frac{\chi_\lambda((2))}{\chi_\lambda(1)} =&  \frac{\chi_{(ak+b-2,ck+d)}(1) +\chi_{(ak+b,ck+d-2)}(1)}{\chi_\lambda(1)}\\=& \frac{(ak+b-ck-d)(ak+b-ck-d-1)(ak+b+1)(ak+b)}{(ak+b-ck-d)(ak+b-ck-d+1)n(n-1)}\\&+\frac{(ak+b-ck-d+2)(ak+b-ck-d+3)(ck+d-1)(ck+d)}{n(n-1)(ak+b-ck-d+1)(ak+b-ck-d+2)}\\=& \frac{(ak+b-ck-d-1)(ak+b+1)(ak+b) + (ak+b-ck-d+3)(ck+d-1)(ck+d)}{n(n-1)(ak+b-ck-d+1)}\\=& \frac{(a^2+c^2)k^2 +(2ab-a+2cd-3c)k + b^2-b+d^2-3d }{n(n-1)} \\=&\frac{\frac{a^2+c^2}{(a+c)^2}(n-b-d)^2 +\frac{2ab-a+2cd-3c}{a+c}(n-b-d) + b^2-b+d^2-3d }{n(n-1)}\\=& \frac{a^2+c^2}{n(n-1)(a+c)^2}n^2 + \left( \frac{-2(a^2+c^2)(b+d)}{n(n-1)(a+c)^2} + \frac{2ab+2cd-a-3c}{n(n-1)(a+c)}\right)n \\&+ \frac{(a^2+c^2)(b+d)^2 - (a+c)(2ab-a+2cd-3c)(b+d) + (a+c)^2(b^2-b+d^2-3d)}{n(n-1)(a+c)^2}\\=& \frac{C}{n(n-1)A^2}n^2 + \left( \frac{2D - E}{n(n-1)A} - \frac{2BC}{n(n-1)A^2}\right)n + \frac{CB^2 - A\left(2D - E\right)B + A^2\left(G - F\right)}{n(n-1)A^2}.
\end{align*}

\end{proof}

\begin{prop}\label{two_hooks_3}
In the situation of \autoref{two_hooks}, 
\begin{align*}
\frac{n(n-1)(n-2)}{3}\frac{\chi_\lambda((3))}{\chi_\lambda(1)} =& \frac{n^3}{12} - \frac{c+d+3}{4}n^2 + \left(\frac{(b-e)^2+(c+d+1)^2}{4}+\frac{5}{12}\right)n \\&- \frac{(c+d+1)(b-c+d-e)(b+c-d-e)}{4}. 
\end{align*}  
\end{prop}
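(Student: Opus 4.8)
The plan is to compute $\chi_\lambda((3))$ directly from the Murnaghan--Nakayama rule and then convert each resulting degree into an explicit rational function via the hook-length formula, in exact parallel with the computation of $\chi_\lambda((2))$ underlying \autoref{two_hooks}. First I would rewrite $\lambda=(a+b,\,2+c,\,2^d,\,1^{a-d-2+e})$ in the form $(\mu_1,\mu_2,2^{\mu_3-2},1^{\mu_4-\mu_3})$ used in the hook-length formula recalled above, namely with $(\mu_1,\mu_2,\mu_3,\mu_4)=(a+b,\;c+2,\;d+2,\;a+e)$, and record the identity $\mu_1+\mu_2+\mu_3+\mu_4-4=n$.

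Next I would enumerate the length-$3$ rim hooks. For $a$ large and $b,c,d,e$ in the generic range there are exactly four, sitting at the four corners of this thick-hook shape: the three rightmost cells of row $1$, the three rightmost cells of the row-$2$ stub, the bottom three cells of the width-$2$ body in column $2$, and the bottom three cells of column $1$. The two horizontal strips have leg length $0$ and the two vertical strips have leg length $2$, so every sign is $(-1)^0=(-1)^2=+1$ (this is the feature that distinguishes the $(3)$ case from the $(2)$ case, where the vertical dominoes carried a minus sign). Removing each strip lowers exactly one coordinate by $3$, producing partitions $\lambda^{(i)}$ obtained from $\lambda$ by $\mu_i\mapsto\mu_i-3$, so that $\chi_\lambda((3))=\sum_{i=1}^{4}\chi_{\lambda^{(i)}}(1)$. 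I would then apply the quoted hook-length formula to $\lambda$ and to each $\lambda^{(i)}$: since the leading factorial is $n!$ for $\lambda$ and $(n-3)!$ for each $\lambda^{(i)}$, each ratio $\chi_{\lambda^{(i)}}(1)/\chi_\lambda(1)$ splits off a factor $\tfrac{1}{n(n-1)(n-2)}$ times the quotient of the finitely many denominator factors that involve $\mu_i$. Summing the four ratios, multiplying by $n(n-1)(n-2)/3$, and eliminating $a$ via $a=(n-b-c-d-e)/2$ should yield the asserted cubic in $n$. As a built-in consistency check, the two ``long'' strips coming from $\mu_1\sim a$ and $\mu_4\sim a$ each contribute approximately $1/8$ to $\chi_\lambda((3))/\chi_\lambda(1)$, while the two ``short'' strips are $O(1/n)$; hence $\chi_\lambda((3))/\chi_\lambda(1)\to 1/4$ and the leading term is $n^3/12$, matching the statement.

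The main obstacle is twofold. The final simplification is algebraically heavy: four rational functions must be placed over a common denominator and the numerator collapsed to the stated polynomial, and one cannot exploit symmetry because the four $\lambda^{(i)}$ treat the five parameters quite differently. More delicate is that the clean four-rim-hook picture presupposes the generic configuration; when $c$ or $d$ (and, to a lesser extent, $b$ or $e$) is small, two of these strips can merge into a single bent strip or one can fail to exist, so the literal enumeration changes and the corresponding term in $\sum_i\chi_{\lambda^{(i)}}(1)$ is no longer meaningful. I would dispose of these degenerate cases by redoing the Murnaghan--Nakayama and hook-length computation separately for each small value of $c$ and $d$ and checking that the same polynomial re-emerges; confirming that the corner cases return precisely the stated cubic, rather than a different correction, is the genuinely delicate part of the argument.
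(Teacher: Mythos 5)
Your proposal is correct and is essentially the paper's own argument: the paper states this proposition without proof as the direct analogue of the Murnaghan--Nakayama/hook-length computation displayed for $\chi_\lambda((2))$, and your setup — writing $(\mu_1,\mu_2,\mu_3,\mu_4)=(a+b,\,c+2,\,d+2,\,a+e)$, enumerating the four rim hooks $\mu_i\mapsto\mu_i-3$ with the two horizontal strips of leg length $0$ and the two vertical strips of leg length $2$ (hence all signs $+1$, unlike the domino case), and summing the four hook-length ratios — is exactly that computation, with the leading-term check $\chi_\lambda((3))/\chi_\lambda(1)\to 1/4$ matching the stated $n^3/12$. Your flag about degenerate configurations is warranted (for instance, when $b=c=d=e=0$ only two rim hooks of length $3$ survive, yet the stated cubic still evaluates correctly), and since only small $c$ and $d$ can disturb the enumeration when $a$ is large, the finite case-check you propose does complete the proof.
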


This and \autoref{two_hooks} show that there is no general polynomial relation between $n$ and two of $\chi((2)), \chi((3))$ and $\chi((2^2))$:

\begin{crl}
There is no nonzero polynomial in three variables $N, t_{(2)}, t_{(3)}$ over $\mathbb{Q}$ whose zero set includes triples $(N, t_{(2)},t_{(3)}) = (n, \chi((2))/\chi(1), \chi((3))/\chi(1))$ for all $n\in \mathbb{Z}_{\geq 3}$ and all $\chi\in\Irr(S_n)$.
\end{crl}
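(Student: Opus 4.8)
The plan is to argue by contradiction. Suppose $p(N,t_{(2)},t_{(3)})\in\mathbb{Q}[N,t_{(2)},t_{(3)}]$ is nonzero of total degree $D$ and vanishes at every triple $(n,\rho_\chi((2)),\rho_\chi((3)))$. Writing $p=\sum_{i+j\le D}c_{ij}(N)\,t_{(2)}^{\,i}t_{(3)}^{\,j}$, it suffices to prove the following claim: for every sufficiently large $n$, the polynomial $q_n(t_{(2)},t_{(3)}):=p(n,t_{(2)},t_{(3)})$ is identically zero. Indeed, if $q_n\equiv 0$ for infinitely many $n$, then each coefficient $c_{ij}(N)$ has infinitely many roots, hence $c_{ij}\equiv 0$ and $p\equiv 0$, a contradiction. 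To prove the claim I will feed in the explicit families of characters produced in \autoref{two_hooks} and \autoref{two_hooks_3}, which let me realize prescribed values of $\rho((2))$ and $\rho((3))$ inside a single $S_n$.

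To set this up, fix a large $n$ and use the partitions $\lambda=(a+b,2+c,2^{d},1^{a-d-2+e})$ of \autoref{two_hooks}. The key simplification is to pass to the coordinates $s=c+d$, $u=b-e$, $v=c-d$. Substituting into \autoref{two_hooks} and \autoref{two_hooks_3} and fixing both $n$ and $s=s_0$, one finds, with $\omega((2))=\tfrac{n(n-1)}{2}\rho((2))$ and $\omega((3))=\tfrac{n(n-1)(n-2)}{3}\rho((3))$,
\begin{align*}
\omega((2)) &= \tfrac{n-s_0-1}{2}\,u+\tfrac{s_0+1}{2}\,v,\\
\omega((3)) &= K(n,s_0)+\tfrac{n-s_0-1}{4}\,u^2+\tfrac{s_0+1}{4}\,v^2,
\end{align*}
where $K(n,s_0)$ is independent of $u,v$. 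Thus, for fixed $n$ and $s_0$, $\rho((2))$ is an affine-linear function of $(u,v)$ while $\rho((3))$ is a diagonal inhomogeneous quadratic function of $(u,v)$, and both are realized by genuine irreducible characters of $S_n$. Write $\phi_{s_0}\colon (u,v)\mapsto(\rho((2)),\rho((3)))$ for the resulting map.

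Realizing a given $(u,v)$ amounts to choosing $a,b,c,d,e\ge 0$ with $c=(s_0+v)/2$, $d=(s_0-v)/2$, $b=(m+u)/2$, $e=(m-u)/2$ for $m=b+e$, and $a=(n-m-s_0)/2$, subject to the hypothesis of \autoref{two_hooks} that $a$ is large and to the validity of the partition (notably $a-d-2+e\ge 0$). Taking $s_0\approx n/8$ and $m\approx n/8$ keeps $a\approx 3n/8$ large and the partition valid uniformly, and lets $u$ range over $\sim n/8$ values and $v$ over $\sim n/8$ values of the correct parity; hence $\phi_{s_0}$ is defined on a product grid of $\gtrsim n^2$ integer points. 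Now $q_n\circ\phi_{s_0}$ is a polynomial in $(u,v)$ of total degree at most $2D$ (independent of $n$) that vanishes on this grid, so by the Schwartz--Zippel / combinatorial Nullstellensatz bound (a nonzero polynomial of bounded degree cannot vanish on a product grid whose side lengths both tend to infinity) we get $q_n\circ\phi_{s_0}\equiv 0$ as a polynomial identity in $(u,v)$. A direct computation gives the Jacobian of $\phi_{s_0}$ as a nonzero constant multiple of $(n-s_0-1)(s_0+1)(v-u)$, which is not identically zero; therefore $\phi_{s_0}$ is dominant onto $\mathbb{A}^2$, its image is Zariski dense, and $q_n\equiv 0$. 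This establishes the claim and hence the corollary.

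The main obstacle is the three-way tension, for a fixed $n$, between the hypotheses: the propositions apply only when $a$ is large, the partition $\lambda$ must stay valid (forcing $a\gtrsim d$), and yet $(u,v)$ must sweep out a genuinely two-dimensional grid whose side lengths grow with $n$. Balancing $s_0$ and $m=b+e$ at roughly $n/8$ resolves this, but one must check that the threshold ``$a$ sufficiently large'' is met uniformly over the grid; this holds because $a=(n-m-s_0)/2$ is constant along the grid and tends to infinity. A secondary point is the nonvanishing of the Jacobian of $\phi_{s_0}$, i.e.\ the simultaneous nondegeneracy of its linear and quadratic parts, which reduces to $0<s_0<n-1$ and is satisfied by our choice.
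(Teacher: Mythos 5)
Your route is genuinely different from the paper's: the paper keeps $n$ as a variable, upgrades the hypothesis to a five-variable polynomial identity $H(N,B,C,D,E)=0$, and then derives a contradiction by isolating the terms of lowest $N$-degree and specializing the resulting two-variable relation $G_1(X,XY)=0$; you instead fix a single large $n$, produce a two-parameter family of irreducible characters inside that one $S_n$, and argue that the parametrizing map $(u,v)\mapsto(\rho_\chi((2)),\rho_\chi((3)))$ is dominant, so that $p(n,\cdot,\cdot)\equiv 0$, after which letting $n$ vary kills $p$. Your reduction to the claim, the change of coordinates $s=c+d$, $u=b-e$, $v=c-d$ (I checked both displayed formulas against \autoref{two_hooks} and \autoref{two_hooks_3}; they are correct), the degree bound of $2D$ on the grid, the Jacobian computation, and the Zariski-density conclusion are all sound.

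The genuine gap is the point you dismiss as secondary: you invoke \autoref{two_hooks} and \autoref{two_hooks_3} for parameters $b,c,d,e\approx n/8$, which are of the same order as $a\approx 3n/8$. The hypothesis that $a$ is ``sufficiently large'' in those propositions is not an absolute threshold; it means large relative to $b,c,d,e$ --- this is exactly how the paper itself invokes them in its own proof of this corollary (``with $a$ being large enough compared to $b,c,d,e$''). So your grid lies outside the stated range of validity of the propositions, and your justification (``$a$ is constant along the grid and tends to infinity'') would be insufficient if, say, the implicit threshold grew quadratically in $c+d$. The gap is repairable in two ways: (i) reprove the two propositions with explicit hypotheses --- their proofs are exact Murnaghan--Nakayama plus hook-length computations, valid under linear inequalities roughly of the shape $c,d\ge 2$ (resp.\ $c,d\ge 3$ for the $3$-cycle) together with $a\ge \max(c,d)+O(1)$, which your parameters satisfy after discarding a bounded boundary strip of the $v$-range; or (ii) keep the propositions as black boxes and take $m=s_0=k(n)$ with $k(n)\to\infty$ slowly enough that the fixed (though unspecified) threshold function is met at every grid point; the grid sides still tend to infinity, which is all your degree argument requires. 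As written, however, the proof applies the propositions outside their stated hypotheses, so this step needs to be filled in before the argument is complete.
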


\begin{proof}
Suppose that $F\in \mathbb{Q}[N,t_{(2)},t_{(3)}]$ is a polynomial whose zero set includes all such triples. We claim that $F=0$. Let $a,b,c,d,e\in \mathbb{Z}_{\geq 0}$ with $a$ being large enough compared to $b,c,d,e$, and let $n$ and $\lambda$ be as in \autoref{two_hooks}. Then $F(n,\chi_\lambda((2))/\chi_\lambda(1), \chi_\lambda((3))/\chi_\lambda(1))=0$. We may multiply $F$ by a power of $N(N-1)(N-2)$ to rewrite it as: $$F(N, t_{(2)},t_{(3)})(N(N-1)(N-2))^s =  G\left(N, \frac{N(N-1)}{2}t_{(2)}, \frac{N(N-1)(N-2)}{3}t_{(3)}\right)$$ for some trivariate polynomial $G$ over $\mathbb{Q}$. By writing $\chi_\lambda((2))/\chi_\lambda(1)$ and $\chi_\lambda((3))/\chi_\lambda(1)$ in terms of $n,b,c,d,e$ using \autoref{two_hooks} and \autoref{two_hooks_3}, we get 
\begin{align*}
G\left(n, \frac{b-e}{2}n\right.& + \frac{(c+d+1)(-b+c-d+e)}{2},  \frac{n^3}{12} - \frac{c+d+3}{4}n^2 \\+& \left(\frac{(b-e)^2+(c+d+1)^2}{4}+\frac{5}{12}\right)n \left.- \frac{(c+d+1)(b-c+d-e)(b+c-d-e)}{4}\right) \\=& F\left(n, \frac{\chi_\lambda((2))}{\chi_\lambda(1)}, \frac{\chi_\lambda((3))}{\chi_\lambda(1)}\right)(n(n-1)(n-2))^s= 0
\end{align*}
for all $n,b,c,d,e$ as above. If we choose $H\in \mathbb{Q}[N,B,C,D,E]$ as
\begin{align*}
H&(N,B,C,D,E) := G\left(N, \frac{B-E}{2}N\right. + \frac{(C+D+1)(-B+C-D+E)}{2},  \frac{N^3}{12} - \frac{C+D+3}{4}N^2 \\+& \left(\frac{(B-E)^2+(C+D+1)^2}{4}+\frac{5}{12}\right)N \left.- \frac{(C+D+1)(B-C+D-E)(B+C-D-E)}{4}\right)
\end{align*}
then $H(n,b,c,d,e)=0$ for all $n,b,c,d,e$ as above, so $H=0$. 

If $G$ is nonzero, let $N^u G_1$ be the sum of the terms of $G$ with the smallest degree $(=u)$ in the first variable, where $G_1$ is a polynomial in the last two variables of $G$. Then the sum of the terms of $H$ with the same $N$-degree must be $$N^uG_1\left(\frac{(C+D+1)(-B+C-D+E)}{2},- \frac{(C+D+1)(B-C+D-E)(B+C-D-E)}{4}\right).$$ This must be zero since $H=0$. Therefore, for any $b,c,d,e\in \mathbb{Z}_{\geq 0}$, we can set $X = \frac{(c+d+1)(-b+c-d+e)}{2}$ and $Y=\frac{b+c-d-e}{2}$ and get $$G_1\left(X, XY\right)=0.$$ Note that by fixing $b$ and $e$ and increasing both $c$ and $d$ by the same amount, we can change the value of $X$ while fixing $Y$. Therefore, each homogeneous part of $G_1$ must also be zero at all such $(X,XY)$. Let $G_2$ be any homogeneous part of $G_1$, of some degree $h$, and divide $G_2$ by the $h$th power of the first variable. Then we get a univariate polynomial which vanishes at all $Y$ whenever $X\neq 0$. For any $z\in \mathbb{Z}$, there is a choice of $b,c,d,e$ such that $Y=z/2$ and $X\neq 0$. Therefore, this univariate polynomial must be zero. Consequently, $G_2=0$ and $G_1=0$, which contradicts $G\neq 0$, so $G=0$ and $F=0$. 
\end{proof}

It might be possible to use the same method to prove that there is no polynomial relation between values of a character at more classes. However, since there are infinitely many classes, we might need a different method to completely prove that there is no such relation for any number of classes.

\begin{conj}
There is no polynomial relation between $n$ and the values of $\chi/\chi(1)$ at finitely many classes that is satisfied by all large integers $n$ and all $\chi\in \Irr(S_n)$, other than the polynomials in the ideal generated by $\{t_\lambda - T_\lambda\mid \lambda\text{ any partition}\}$ in the polynomial ring $\mathbb{Q}(N)[t_{(2)},t_{(3)},t_{(2^2)},t_{(4)},\dots]$.
\end{conj}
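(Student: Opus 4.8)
The plan is to prove the conjecture in the strong geometric form: for every bound $M$, the Zariski closure in $\mathbb{A}^{M}$, with coordinates $N, t_{(2)}, \dots, t_{(M)}$, of the point set
$$ S = \{(n, \rho_\chi((2)), \dots, \rho_\chi((M))) : n \text{ large}, \ \chi \in \Irr(S_n)\} $$
is the whole affine space. Since every relation valid for all large $n$ and all $\chi$ vanishes on $S$ and hence on $\overline{S}$, this shows that no relation survives after passing to the quotient by the ideal generated by the $t_\lambda - T_\lambda$ (which, by \autoref{char_from_cycles}, is exactly the ideal that eliminates the non-cycle variables). First I would reduce: given a putative relation $F$, clear denominators to assume $F \in \mathbb{Q}[N, t_{(2)}, \dots, t_{(M)}]$, discard any factor depending on $N$ alone (such a factor has finitely many integer roots and so is irrelevant for all large $n$), and pass to the integer-valued coordinates $\omega_\chi((j)) = \frac{n(n-1)\cdots(n-j+1)}{j}\rho_\chi((j))$ by multiplying through by a suitable power of $\prod_{j=2}^{M}\frac{n^{\downarrow j}}{j}$. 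This turns $F$ into a polynomial $G \in \mathbb{Q}[N, s_2, \dots, s_M]$ that must vanish on $(n, \omega_\chi((2)), \dots, \omega_\chi((M)))$, and the goal becomes $G = 0$.

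The engine is the realization technique of Section 6, which must be extended from the single cycles $(2), (3)$ to the whole range $(2), \dots, (M)$. For this I would work with the multi-row families $\lambda = \lambda(k; b_1, \dots, b_r)$ having $r \approx M$ long rows of lengths $a_i k + b_i$ (with $a_1 > \cdots > a_r$ fixed and $k \to \infty$) together with a controlled tail, as in the first Proposition of Section 6. For each cycle length $j \le M$, the Murnaghan--Nakayama rule expresses $\chi_\lambda((j))$ as an alternating sum of dimensions of the diagrams obtained by stripping a rim hook of length $j$; the hook length formula then gives $\omega_\chi((j)) = \Phi_j(n; b_1, \dots, b_r)$ as an explicit polynomial in $n$ and the offsets, whose top-degree-in-$n$ part is governed by the $a_i$ and whose lower coefficients are polynomials in the $b_i$. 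Substituting $s_j = \Phi_j$ into $G$ yields a polynomial identity $H(N; b_1, \dots, b_r) \equiv 0$ in the offsets and $n$, exactly as in the proof of the Corollary preceding the conjecture; one then peels off graded pieces of $H$ and uses the freedom in the $b_i$ to force all coefficients of $G$ to vanish.

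A cleaner, more conceptual route is available through Kerov polynomials. Writing $\Sigma_j(\lambda) = j\,\omega_\chi((j)) = n^{\downarrow j}\rho_\chi((j))$, Kerov's theorem gives $\Sigma_j = R_{j+1} + (\text{polynomial in } R_2, \dots, R_{j-1})$ with $R_2 = n$, where the $R_\bullet$ are the free cumulants of the transition measure of $\lambda$. This is a triangular, hence invertible, change of coordinates between $(\Sigma_2, \dots, \Sigma_M)$ and $(R_3, \dots, R_{M+1})$ over $\mathbb{Q}[n]$, so the $\omega_\chi((j))$ are algebraically independent along a family of diagrams precisely when the free cumulants are. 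Taking $\lambda$ in the multi-rectangular family with $\approx M/2$ rectangles, the cumulants $R_j$ become explicit polynomials in the rectangle coordinates, and I would show that the induced map into the $(R_2, \dots, R_{M+1})$-space is dominant, from which Zariski density of $S$ and hence $G = 0$ follow. This route trades the bare-hands hook length bookkeeping of the first route for the structural input that Kerov polynomials are lower triangular in the free cumulants.

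The main obstacle in either route is the independence step: establishing that the realized values genuinely fill out $\mathbb{A}^{M}$ rather than lying on some unforeseen hypersurface. In the first route this is a Jacobian computation, namely that $(b_1, \dots, b_r) \mapsto (\Phi_2, \dots, \Phi_M)$ has generically full rank; this is plausible from the triangular leading behavior but requires a careful analysis of how the offsets enter the subleading coefficients produced by the hook length formula, and the delicate peeling argument in the Corollary's proof (separating the roles of the quantities there called $X$ and $Y$) is a two-variable rehearsal of exactly this difficulty. In the second route the obstacle is packaged into the single statement that the multi-rectangular free-cumulant map is dominant, together with the point that the genuine-partition constraints (integrality and the inequalities $a_1 > \cdots > a_r$, positivity of row lengths) remove only a set that is not Zariski dense and so do not shrink the closure. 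Controlling this last point — that the arithmetic and positivity restrictions on honest Young diagrams do not conspire to confine the character values to a proper subvariety — is, I expect, the real content of the conjecture.
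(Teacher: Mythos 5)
There is nothing in the paper to compare your argument against: this statement is posed as an open conjecture at the end of Section~6, and the paper proves only the two-variable special case (the unlabeled corollary built on \autoref{two_hooks} and \autoref{two_hooks_3}, ruling out relations among $N$, $t_{(2)}$, $t_{(3)}$), remarking explicitly that "we might need a different method" for finitely many classes in general. So the question is whether your proposal closes the gap on its own. Your preliminary reductions are sound and match the intended reading of the conjecture: by \autoref{char_from_cycles} any relation can be reduced modulo the ideal generated by $\{t_\lambda - T_\lambda\}$ to one in the cycle variables alone; denominators can be cleared; a factor depending on $N$ alone is harmless since it has finitely many integer roots; and the conjecture is then equivalent to Zariski density of your point set $S$ in $\mathbb{A}^M$. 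The Kerov-polynomial reformulation in your second route is also correct as stated (triangularity of $\Sigma_j = R_{j+1} + \text{lower terms}$ makes the change of coordinates invertible over $\mathbb{Q}[n]$), and it is a genuinely different and structurally cleaner framework than the paper's hook-length computations.

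The gap is the one you name yourself: in both routes the decisive step is asserted, not proved, and that step \emph{is} the conjecture. In route one, nothing guarantees that the map $(b_1,\dots,b_r)\mapsto(\Phi_2,\dots,\Phi_M)$ has Zariski-dense image; the paper's peeling argument in the two-variable corollary does not iterate in any obvious way, because it exploits the very particular form of \autoref{two_hooks} and \autoref{two_hooks_3} --- the quantities $X$ and $Y$ there can be varied independently by an explicit hand choice of $b,c,d,e$, and no analogue of that independence is exhibited (or even formulated precisely) for $M$ cycle lengths and multi-row shapes. In route two, the claim that the multi-rectangular free-cumulant map is dominant, \emph{and} that the integrality and ordering constraints on honest partitions do not confine the image to a proper subvariety, is exactly the density statement being sought; calling it "the real content of the conjecture" is accurate, but it means your argument reduces the conjecture to itself in new coordinates. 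As it stands this is a research plan with a correctly identified key lemma, not a proof. If you pursue it, the second route is the one to make precise: state and prove that for multi-rectangular diagrams the free cumulants $R_3,\dots,R_{M+1}$, as polynomials in the Stanley coordinates $(p_i,q_i)$, define a dominant map whose restriction to the admissible integer points remains Zariski dense; combined with Kerov triangularity and your reductions, that single lemma would yield the conjecture.
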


\subsection*{Acknowledgment}
I would like to thank Hung P. Tong-Viet for introducing \autoref{conjectures} to me. I also thank the anonymous person mentioned in \autoref{Remark2}.
\newpage
{\scriptsize
\begin{longtable}[c]{| c | c | c | c |}\caption{Forbidden sets of sizes $\leq 4$ of zeros that can be obtained as compositions of $4$ or less transpositions.\label{forbiddentable}}\\
\hline
\# &  Classes & A polynomial in the Gr\"obner basis & Condition on $n$\\
\hline
1 & $(2),(3),(2^2)$  &  $1$ & All $n$ \\
\hline
2 & $(3),(2^2)$ &  $t_{(2)}^2 - \frac{2}{n(n-1)}$ & $\frac{n(n-1)}{2}$ is not a square\\
\hline
3 & $(2), (2^2)$ & $t_{(3)} + \frac{1}{2(n-2)}$ & $n\equiv 2$ mod $3$\\
\hline
4 & $(2), (3)$ & $t_{(2^2)}-\frac{2}{(n-2)(n-3)}$ & $n\equiv 2$ or $3$ mod $4$\\
\hline
5 & \begin{tabular}{c}$(5),(3^2),$\\one of \\$(2),(2^2),(3)$\end{tabular} & \scriptsize\begin{tabular}{c}$t_{(2^2)} + \frac{4}{n - 3}t_{(3)} + \frac{-n(n-1)}{(n-2)(n-3)}t_{(2)}^2 + \frac{2}{(n-2)(n-3)}$, \\$t_{(3)}^2 + \frac{-9n + 60}{n(n-1)(n-2)}t_{(3)} - \frac{9}{(n-2)^2}t_{(2)}^2 + \frac{-3(n-8)}{n(n-1)(n-2)^2}$\end{tabular} & All large $n$\\
\hline
6 & \begin{tabular}{c}$(2),(5),(4,2)$,\\one of \\$(4),(3,2),(2^3)$\end{tabular} & $t_{(3)}-\frac{1}{(n-2)(n-5)}$ & $n\notin \{7,15,25\}$\\
\hline
7 & $(2^3),(5),(2^4),(3^2)$ & \scriptsize\begin{tabular}{c}$t_{(2)}^8 + \frac{-8 n^2 - 64 n + 160}{n^2(n-1)^2}t_{(2)}^6 + \frac{8 n^4 + 96 n^3 - 632 n^2 + 11040 n - 29696}{n^4(n-1)^4}t_{(2)}^4$\\$ + \frac{(32 n^6 - 640 n^5 - 288 n^4 + 102912 n^3 - 1086848 n^2 + 4042240 n - 4915200)}{n^6(n-1)^6}t_{(2)}^2 $ \\ $+ \frac{16 n^6 - 880 n^5 + 22256 n^4 - 307472 n^3 + 2187136 n^2 - 7194880 n + 8601600}{n^7(n-1)^7}$\end{tabular} & All large $n$\\
\hline
8 & $(2^2),(4,2),(4),(3^2)$ & \scriptsize$t_{(2)}^4 + \frac{-4n^2 + 4 n - 12}{n^2(n-1)^2}t_{(2)}^2 + \frac{4 n^2 - 52 n + 120}{n^3(n-1)^3}$ & All large $n$\\
\hline
9 & $(3,2^2),(5),(4),(3^2)$ & \scriptsize\begin{tabular}{c}$t_{(2)}^6 + \frac{(1/3 n^3 - 11 n^2 + 584/3 n - 720)}{n^2(n-1)^2}t_{(2)}^4 $ \\ $+ \frac{(-4/3 n^5 + 208/3 n^4 - 176/3 n^3 - 14476/3 n^2 + 20320 n - 14400)}{n^4(n-1)^4}t_{(2)}^2 $\\$ + \frac{(4/3 n^5 - 352/3 n^4 + 8252/3 n^3 - 74240/3 n^2 + 90944 n - 115200)}{n^5(n-1)^5}$\end{tabular} & All large $n$\\
\hline
10 & \scriptsize$(2^4),(4,2),(3,2),(3^2)$ & \tiny\begin{tabular}{c}$t_{(2)}^8 + \frac{(-52/3 n^2 + 1100/9 n - 4208/9)}{n^2(n-1)^2}t_{(2)}^6 $\\$+ \frac{(292/3 n^4 - 43912/27 n^3 + 94364/9 n^2 - 734288/27 n + 340736/9)}{n^4(n-1)^4}t_{(2)}^4$ \\ $ + \frac{(-320 n^6 + 46400/3 n^5 - 887488/3 n^4 + 3030592 n^3 - 50492288/3 n^2 + 139409920/3 n - 49561600)}{n^6(n-1)^6}t_{(2)}^2 $ \\ $+ \frac{(256 n^6 - 23296 n^5 + 628480 n^4 - 7422208 n^3 + 42434560 n^2 - 115072000 n + 118272000)}{n^7(n-1)^7}$\end{tabular} & All large $n$\\
\hline
11 & $(5),(2^4),(2^2),(4,2)$ & \scriptsize$t_{(2)}^4 + \frac{(-108 n + 288)}{n^2(n-1)^2}t_{(2)}^2 + \frac{(152 n - 456)}{n^3(n-1)^3}$ & All large $n$\\
\hline
12 & $(2^3),(3),(2^4),(4,2)$ & \scriptsize$t_{(2)}^4 + \frac{(2/3 n^2 - 150 n + 2440/3)}{n^2(n-1)^2}t_{(2)}^2 + \frac{(-40/3 n^2 + 440 n - 5600/3)}{n^3(n-1)^3}$ & All large $n$\\
\hline
13 & $(2),(5),(2^4),(4)$ & \scriptsize$t_{(3)}^2 + \frac{(n^2 - 31 n + 140)}{n(n-1)(n-2)}t_{(3)} + \frac{(1/4 n^2 - 33/4 n + 49)}{n(n-1)(n-2)^2}$ & All large $n$\\
\hline
14 & $(3),(5),(2^4),(4)$ & \scriptsize$t_{(2)}^4 + \frac{(-12 n^2 + 268 n - 1296)}{n^2(n-1)^2}t_{(2)}^2 + \frac{(12 n^2 - 396 n + 2352)}{n^3(n-1)^3}$ & All large $n$\\
\hline
15 & $(2),(5),(2^4),(3,2)$ & \scriptsize$t_{(3)}^2 + \frac{(n^2 - 31 n + 140)}{n(n-1)(n-2)}t_{(3)} + \frac{(1/4 n^2 - 33/4 n + 49)}{n(n-1)(n-2)^2}$ & All large $n$\\
\hline
16 & $(3,2^2),(5),(2^4),(3,2)$ & \tiny\begin{tabular}{c}$t_{(2)}^8 + \frac{(-16 n^2 + 284/5 n - 160)}{n^2(n-1)^2}t_{(2)}^6 + \frac{(60 n^4 + 464/5 n^3 - 36724/5 n^2 + 221528/5 n -370464/5)}{n^4(n-1)^4}t_{(2)}^4$\\$ + \frac{(-48 n^6 - 7872/5 n^5 + 169008/5 n^4 - 816288/5 n^3 - 789216/5 n^2 + 12158208/5 n - 3663360)}{n^6(n-1)^6}t_{(2)}^2 $ \\ $+ \frac{(6048/5 n^5 - 120096/5 n^4 + 602208/5 n^3 + 984096/5 n^2 - 12877056/5 n + 4354560)}{n^7(n-1)^7}$\end{tabular} & All large $n$\\
\hline
17 & $(3,2^2),(5),(4,2),(3^2)$ & \scriptsize\begin{tabular}{c}$t_{(2)}^6 + \frac{(1/3 n^3 - 11 n^2 - 232/3 n + 304)}{n^2(n-1)^2}t_{(2)}^4 $ \\ $+ \frac{(-4/3 n^5 + 16/3 n^4 + 784/3 n^3 + 2996/3 n^2 - 13600 n + 24000)}{n^4(n-1)^4}t_{(2)}^2$\\$ + \frac{(4/3 n^5 + 32/3 n^4 - 772/3 n^3 - 6080/3 n^2 + 20800 n - 38400)}{n^5(n-1)^5}$ \end{tabular}& All large $n$\\
\hline
18 & $(3,2^2),(2^2),(4),(3^2)$ & \scriptsize$t_{(2)}^4 + \frac{(-4 n^2 + 4 n - 12)}{n^2(n-1)^2}t_{(2)}^2 + \frac{(4 n^2 - 52 n + 120)}{n^3(n-1)^3}$ & All large $n$\\
\hline
19 & $(5),(4,2),(3,2),(3^2)$ & \scriptsize\begin{tabular}{c}$t_{(2)}^6 + \frac{(1/3 n^3 - n^2 - 412/3 n + 384)}{n^2(n-1)^2}t_{(2)}^4 $ \\ $+ \frac{(-32 n^4 + 176 n^3 + 3824 n^2 - 25088 n + 38400)}{n^4(n-1)^4}t_{(2)}^2 + \frac{(768 n^3 - 11776 n^2 + 54016 n- 76800)}{n^5(n-1)^5}$ \end{tabular}& All large $n$\\
\hline
20 & $(2^3),(5),(2^2),(4,2)$ & \scriptsize$t_{(2)}^4 + \frac{(-72 n + 180)}{n^2(n-1)^2}t_{(2)}^2 + \frac{(80 n - 240)}{n^3(n-1)^3}$ & All large $n$\\
\hline
21 & $(3,2^2),(3),(5),(3,2)$ & \scriptsize$t_{(2)}^2 + \frac{1/3 n - 8/3}{n(n-1)}$ & All large $n$\\
\hline
22 & $(2^3),(3,2^2),(3),(5)$ & \scriptsize$t_{(2)}^4 + \frac{-6 n^2 + 30 n + 200}{n^2(n-1)^2}t_{(2)}^2 + \frac{80 n - 640}{n^3(n-1)^3}$ & All large $n$\\
\hline
23 & $(3),(5),(4,2),(3,2)$ & \scriptsize$t_{(2)}^2 - \frac{8}{n(n-1)(n+2)}$ & All large $n$\\
\hline
24 & $(3,2^2),(3),(2^4),(4,2)$ & \scriptsize$t_{(2)}^4 + \frac{(-12 n^2 - 36 n + 560)}{n^2(n-1)^2}t_{(2)}^2 + \frac{12 n^2 + 212 n - 1360}{n^3(n-1)^3}$ & All large $n$\\
\hline
25 & $(3),(5),(2^4),(3,2)$ & \scriptsize$t_{(2)}^4 + \frac{-12 n^2 + 108 n - 976}{n^2(n-1)^2}t_{(2)}^2 + \frac{12 n^2 - 396 n + 2352}{n^3(n-1)^3}$ & All large $n$\\
\hline
26 & $(2^3),(3,2^2),(5),(2^4)$ & \scriptsize\begin{tabular}{c}$t_{(2)}^8 + \frac{20 n^2 - 416 n + 1424}{n^2(n-1)^2}t_{(2)}^6 $\\$+ \frac{-100 n^4 + 816 n^3 + 14268 n^2 - 142952 n + 280800}{n^4(n-1)^4}t_{(2)}^4 $\\$+ \frac{6160 n^5 - 119360 n^4 + 598160 n^3 + 590240 n^2 - 9331200 n + 14400000}{n^6(n-1)^6}t_{(2)}^2 $\\$+ \frac{-5600 n^5 + 111200 n^4 - 557600 n^3 - 911200 n^2 + 11923200 n - 20160000}{n^7(n-1)^7}$\end{tabular} & All large $n$\\
\hline
27 & $(2),(3,2^2),(5),(4)$ & \scriptsize$t_{(3)}^2 + \frac{1/2 n^2 - 61/2 n + 150}{n(n-1)(n-2)}t_{(3)} + \frac{-6 n + 48}{n(n-1)(n-2)^2}$ & All large $n$\\
\hline
28 & $(3,2^2),(2^2),(4,2),(3^2)$ & \scriptsize$t_{(2)}^4 + \frac{-4 n^2 + 4 n - 12}{n^2(n-1)^2}t_{(2)}^2 + \frac{4 n^2 - 52 n + 120}{n^3(n-1)^3}$ & All large $n$\\
\hline
29 & $(2^3),(5),(2^4),(2^2)$ & \scriptsize$t_{(2)}^4 + \frac{-60 n + 144}{n^2(n-1)^2}t_{(2)}^2 + \frac{56 n - 168}{n^3(n-1)^3}$ & All large $n$\\
\hline
30 & $(2^3),(3),(4,2),(3^2)$ & \scriptsize$t_{(2)}^4 + \frac{-6 n^2 + 70 n - 280}{n^2(n-1)^2}t_{(2)}^2 + \frac{-320/3 n + 1600/3}{n^3(n-1)^3}$ & All large $n$\\
\hline
31 & $(2),(2^3),(2^4),(3^2)$ & \scriptsize$t_{(3)}^2 + \frac{-9/5 n^2 + 153/5 n - 84}{n(n-1)(n-2)}t_{(3)} + \frac{-9/20 n^2 + 129/20 n - 21}{n(n-1)(n-2)^2}$ & All large $n$\\
\hline
32 & $(2),(2^4),(4,2),(3^2)$ & $1$ & All $n$\\
\hline
33 & $(3),(2^4),(4,2),(3^2)$ & \scriptsize$t_{(2)}^4 + \frac{(-12 n^2 + 268 n - 1264)}{n^2(n-1)^2}t_{(2)}^2 + \frac{12 n^2 - 1796/3 n + 8080/3}{n^3(n-1)^3}$ & All large $n$\\
\hline
34 & $(3,2^2),(3),(2^4),(3,2)$ & \scriptsize$t_{(2)}^4 + \frac{-12 n^2 + 108 n - 304}{n^2(n-1)^2}t_{(2)}^2 + \frac{12 n^2 - 172 n + 560}{n^3(n-1)^3}$ & All large $n$\\
\hline
35 & $(3,2^2),(5),(4,2),(3,2)$ & \scriptsize\begin{tabular}{c}$t_{(2)}^6 + \frac{-2 n^3 - 114 n^2 + 1724 n - 3984}{n^2(n-1)^2(n-10)}t_{(2)}^4 $\\$+ \frac{272 n^4 + 1760 n^3 - 61168 n^2 + 296448 n - 403200}{n^4(n-1)^4(n-10)}t_{(2)}^2 $\\$+ \frac{-8448 n^3 + 116736 n^2 - 504576 n + 691200}{n^5(n-1)^5(n-10)}$\end{tabular} & All large $n$\\
\hline
36 & $(5),(2^4),(4,2),(3,2)$ & \scriptsize\begin{tabular}{c}$t_{(2)}^8 + \frac{-12 n^2 + 28 n - 464}{n^2(n-1)^2}t_{(2)}^6 $\\$+ \frac{12 n^4 + 1032 n^3 - 18596 n^2 + 151952 n - 315648)}{n^4(n-1)^4}t_{(2)}^4 $\\$+ \frac{-1152 n^5 + 1152 n^4 + 587904 n^3 - 7424640 n^2 + 29508096 n - 36864000}{n^6(n-1)^6}t_{(2)}^2 $\\$+ \frac{27648 n^4 - 1078272 n^3 + 11363328 n^2 - 44485632 n + 58060800}{n^7(n-1)^7}$\end{tabular} & All large $n$\\
\hline
37 & $(2^4),(2^2),(3,2),(3^2)$ & \scriptsize$t_{(2)}^2 + \frac{-7/2 n + 35}{n(n-1)(n-16)}$ & All large $n$\\
\hline
38 & $(3,2^2),(5),(2^4),(4,2)$ & \scriptsize\begin{tabular}{c}$t_{(2)}^8 + \frac{-12 n^2 - 128 n + 784}{n^2(n-1)^2}t_{(2)}^6 $\\$+ \frac{12 n^4 + 1616 n^3 - 6580 n^2 - 45448 n + 144480}{n^4(n-1)^4}t_{(2)}^4 $\\$+ \frac{-3120 n^5 + 5696 n^4 + 108240 n^3 + 518944 n^2 - 5452800 n + 8870400}{n^6(n-1)^6}t_{(2)}^2 $\\$+ \frac{1824 n^5 + 3936 n^4 - 98080 n^3 - 920672 n^2 + 7960320 n - 13708800}{n^7(n-1)^7}$ \end{tabular}& All large $n$\\
\hline
39 & $(5),(2^2),(4,2),(3,2)$ & \scriptsize$t_{(2)}^4 + \frac{-2 n^2 - 70 n + 192}{n^2(n-1)^2}t_{(2)}^2 + \frac{96 n - 288}{n^3(n-1)^3}$ & All large $n$\\
\hline
40 & $(2^3),(3,2^2),(2^4),(3^2)$ & \scriptsize\begin{tabular}{c}$t_{(2)}^6 + \frac{67/7 n^3 - 4457/21 n^2 + 30554/21 n - 18696/7}{(n^5 - 58/7 n^4 + 95/7 n^3 - 44/7 n^2}t_{(2)}^4 $\\$+ \frac{144/7 n^5 - 17008/21 n^4 + 256856/21 n^3 - 1787896/21 n^2 + 1827520/7 n - 1987200/7}{n^9 - 72/7 n^8 + 218/7 n^7 - 292/7 n^6 + 183/7 n^5 - 44/7 n^4}t_{(2)}^2$\\$ + \frac{-220/7 n^5 + 3640/3 n^4 - 372140/21 n^3 + 361960/3 n^2 - 2636800/7 n + 432000}{n^11 - 79/7 n^10 + 290/7 n^9 - 510/7 n^8 + 475/7 n^7 - 227/7 n^6 + 44/7 n^5}$\end{tabular} & All large $n$\\
\hline
41 & $(2),(3,2^2),(5),(3,2)$ & \scriptsize$t_{(3)}^2 + \frac{1/2 n^2 - 61/2 n + 150}{n(n-1)(n-2)}t_{(3)} + \frac{-6 n + 48}{n(n-1)(n-2)^2}$ & All large $n$\\
\hline
42 & $(3),(2^4),(4,2),(4)$ & \scriptsize$t_{(2)}^4 + \frac{-12 n^2 + 268 n - 960}{n^2(n-1)^2}t_{(2)}^2 + \frac{12 n^2 - 396 n + 1680}{n^3(n-1)^3}$ & All large $n$\\
\hline
43 & $(2),(2^3),(3,2^2),(5)$ & \scriptsize$t_{(3)}^2 + \frac{1/2 n^2 - 61/2 n + 150}{n(n-1)(n-2)}t_{(3)} + \frac{-6 n + 48}{n(n-1)(n-2)^2}$ & All large $n$\\
\hline
44 & $(2^3),(3),(5),(4,2)$ & \scriptsize$t_{(2)}^4 + \frac{-6 n^2 + 70 n + 40}{n^2(n-1)^2}t_{(2)}^2 - \frac{320}{n^3(n-1)^3}$ & All large $n$\\
\hline
45 & $(2),(4,2),(4),(3^2)$ & \scriptsize$t_{(3)}^2 + \frac{15}{n(n-1)(n-2)}t_{(3)} + \frac{-3 n + 15}{n(n-1)(n-2)^2}$ & All large $n$\\
\hline
46 & $(2^3),(3,2^2),(5),(4,2)$ & \scriptsize\begin{tabular}{c}$t_{(2)}^8 + \frac{-6 n^2 - 182 n + 904}{n^2(n-1)^2}t_{(2)}^6$\\$ + \frac{728 n^3 + 5240 n^2 - 93232 n + 206400}{n^4(n-1)^4}t_{(2)}^4 $\\$+ \frac{-480 n^5 - 9760 n^4 - 57120 n^3 + 2252320 n^2 - 10809600 n + 14400000}{n^6(n-1)^6}t_{(2)}^2 $\\$+ \frac{281600 n^3 - 3891200 n^2 + 16819200 n - 23040000}{n^7(n-1)^7}$\end{tabular} & All large $n$\\
\hline
47 & $(5),(2^4),(4,2),(3^2)$ & \scriptsize\begin{tabular}{c}$t_{(2)}^8 + \frac{-24 n^2 + 320 n - 4192}{n^2(n-1)^2}t_{(2)}^6$\\$ + \frac{168 n^4 - 5664 n^3 + 37800 n^2 + 360864 n - 1224704}{n^4(n-1)^4}t_{(2)}^4$\\$ + \frac{-288 n^6 + 11904 n^5 - 1248 n^4 - 896512 n^3 - 5894016 n^2 + 56788480 n - 93388800}{n^6(n-1)^6}t_{(2)}^2$\\$ + \frac{144 n^6 - 7920 n^5 - 60816 n^4 + 826992 n^3 + 9817984 n^2 - 82981120 n + 144998400}{n^7(n-1)^7}$\end{tabular} & All large $n$\\
\hline
48 & $(2^3),(5),(2^4),(4,2)$ & \scriptsize\begin{tabular}{c}$t_{(2)}^8 + \frac{-236 n + 1024}{n^2(n-1)^2}t_{(2)}^6 $\\$+ \frac{136 n^3 + 12448 n^2 - 121992 n + 244000}{n^4(n-1)^4}t_{(2)}^4 $\\$+ \frac{-4480 n^4 - 218880 n^3 + 3352960 n^2 - 13804800 n + 17280000}{n^6(n-1)^6}t_{(2)}^2$\\$ + \frac{-12800 n^4 + 499200 n^3 - 5260800 n^2 + 20595200 n - 26880000}{n^7(n-1)^7}$\end{tabular} & All large $n$\\
\hline
49 & $(3,2^2),(4,2),(3,2),(3^2)$ & \scriptsize\begin{tabular}{c}$t_{(2)}^6 + \frac{22/3 n^3 - 222 n^2 + 4388/3 n - 2448}{n^2(n-1)^2(n-6)}t_{(2)}^4 $\\$+ \frac{-32/3 n^5 - 1360/3 n^4 + 46304/3 n^3 - 400112/3 n^2 + 446720 n - 518400}{n^4(n-1)^4(n-6)}t_{(2)}^2 $\\$+ \frac{-128/3 n^5 + 8576/3 n^4 - 148096/3 n^3 + 1029760/3 n^2 - 1043200 n + 1152000}{n^5(n-1)^5(n-6)}$\end{tabular} & All large $n$\\
\hline
50 & $(3,2^2),(2^4),(2^2),(3^2)$ & \scriptsize$t_{(2)}^2 + \frac{-n + 10}{n(n-1)(n-6)}$ & All large $n$\\
\hline
51 & $(2^3),(3,2^2),(3),(4,2)$ & \scriptsize$t_{(2)}^4 + \frac{-6 n^2 - 90 n + 680}{n^2(n-1)^2}t_{(2)}^2 + \frac{320 n - 1600}{n^3(n-1)^3}$ & All large $n$\\
\hline
52 & $(3,2^2),(5),(2^4),(4)$ & \scriptsize\begin{tabular}{c}$t_{(2)}^8 + \frac{-12 n^2 + 32 n + 816}{n^2(n-1)^2}t_{(2)}^6 $\\$+ \frac{12 n^4 - 624 n^3 + 15052 n^2 - 114504 n + 212832}{n^4(n-1)^4}t_{(2)}^4 $\\$+ \frac{2640 n^5 - 60480 n^4 + 358992 n^3 + 108960 n^2 - 5018112 n + 7948800}{n^6(n-1)^6}t_{(2)}^2 $\\$+ \frac{-2016 n^5 + 40032 n^4 - 200736 n^3 - 328032 n^2 + 4292352 n - 7257600}{n^7(n-1)^7}$ \end{tabular}& All large $n$\\
\hline
53 & $(2^3),(3),(2^4),(3^2)$ & \scriptsize$t_{(2)}^4 + \frac{-4 n^2 + 4 n + 48}{n^2(n-1)^2}t_{(2)}^2 + \frac{-4 n^2 + 172/3 n - 560/3}{n^3(n-1)^3}$ & All large $n$\\
\hline
54 & $(3,2^2),(5),(3,2),(3^2)$ & \scriptsize\begin{tabular}{c}$t_{(2)}^6 + \frac{1/3 n^3 + n^2 - 400/3 n + 336}{n^2(n-1)^2}t_{(2)}^4 $\\$+ \frac{4/3 n^5 - 160/3 n^4 + 632/3 n^3 + 12964/3 n^2 - 27808 n + 43200}{n^4(n-1)^4}t_{(2)}^2 $\\$+ \frac{4/3 n^5 - 352/3 n^4 + 8252/3 n^3 - 74240/3 n^2 + 90944 n - 115200}{n^5(n-1)^5}$\end{tabular} & All large $n$\\
\hline
55 & $(2^4),(2^2),(4,2),(3^2)$ & \scriptsize$t_{(2)}^4 + \frac{-38/5 n^2 + 182/5 n - 384/5}{n^2(n-1)^2}t_{(2)}^2 + \frac{38/5 n^2 - 494/5 n + 228}{n^3(n-1)^3}$ & All large $n$\\
\hline
56 & $(2),(3,2^2),(5),(2^4)$ & $1$ & All $n$\\
\hline
57 & $(2^4),(2^2),(4),(3^2)$ & \scriptsize$t_{(2)}^4 + \frac{-14/5 n - 48/5}{n^2(n-1)}t_{(2)}^2 + \frac{14/5 n^2 - 182/5 n + 84}{n^3(n-1)^3}$ & All large $n$\\
\hline
58 & $(2),(4,2),(3,2),(3^2)$ & \scriptsize$t_{(3)}^2 + \frac{15}{n(n-1)(n-2)}t_{(3)} + (-3 n + 15)/n(n-1)(n-2)^2$ & All large $n$\\
\hline
59 & $(2),(5),(2^4),(4,2)$ & $1$ & All $n$\\
\hline
60 & $(3,2^2),(5),(2^4),(2^2)$ & \scriptsize$t_{(2)}^4 + \frac{-68 n + 168}{n^2(n-1)^2}t_{(2)}^2 + \frac{72 n - 216}{n^3(n-1)^3}$ & All large $n$\\
\hline
61 & $(2^3),(5),(4,2),(3^2)$ & \scriptsize\begin{tabular}{c}$t_{(2)}^8 + \frac{-12 n^2 + 32 n - 496}{n^2(n-1)^2}t_{(2)}^6 $\\$+ \frac{36 n^4 - 624 n^3 - 1332 n^2 + 84384 n - 219200}{n^4(n-1)^4}t_{(2)}^4 $\\$+ \frac{9600 n^4 + 66560 n^3 - 2759040 n^2 + 13926400 n - 19200000}{n^6(n-1)^6}t_{(2)}^2 $\\$+ \frac{-307200 n^3 + 4710400 n^2 - 21606400 n + 30720000}{n^7(n-1)^7}$\end{tabular} & All large $n$\\
\hline
62 & $(5),(2^4),(4),(3^2)$ & \scriptsize\begin{tabular}{c}$t_{(2)}^8 + \frac{-24 n^2 + 320 n - 5472}{n^2(n-1)^2}t_{(2)}^6$\\$ + \frac{168 n^4 - 5664 n^3 + 44200 n^2 - 265056 n + 930816}{n^4(n-1)^4}t_{(2)}^4$\\$+ \frac{-288 n^6 + 11904 n^5 - 93408 n^4 - 1408512 n^3 + 19357824 n^2 - 68590080 n + 66355200}{n^6(n-1)^6}t_{(2)}^2 $\\$+ \frac{144 n^6 - 7920 n^5 + 200304 n^4 - 2767248 n^3 + 19684224 n^2 - 64753920 n + 77414400}{n^7(n-1)^7}$\end{tabular} & All large $n$\\
\hline
63 & $(3,2^2),(2^4),(4),(3^2)$ & \scriptsize\begin{tabular}{c}$t_{(2)}^8 + \frac{-35/2 n^2 + 235/2 n - 378}{n^2(n-1)^2}t_{(2)}^6 $\\$+ \frac{167/2 n^4 - 1885 n^3 + 31321/2 n^2 - 52931 n + 68868}{n^4(n-1)^4}t_{(2)}^4 $\\$+ \frac{-132 n^6 + 4484 n^5 - 57000 n^4 + 346260 n^3 - 1086860 n^2 + 1808160 n - 1425600}{n^6(n-1)^6}t_{(2)}^2 $\\$+ \frac{66 n^6 - 2878 n^5 + 49954 n^4 - 439442 n^3 + 2057900 n^2 - 4862400 n + 4536000}{n^7(n-1)^7}$\end{tabular} & All large $n$\\
\hline
64 & $(3,2^2),(3),(5),(2^4)$ & \scriptsize$t_{(2)}^4 + \frac{-12 n^2 + 108 n - 16}{n^2(n-1)^2}t_{(2)}^2 + \frac{12 n^2 - 76 n - 208}{n^3(n-1)^3}$ & All large $n$\\
\hline
65 & $(5),(2^4),(3,2),(3^2)$ & \scriptsize\begin{tabular}{c}$t_{(2)}^8 + \frac{-24 n^2 + 240 n - 1312}{n^2(n-1)^2}t_{(2)}^6$\\$ + \frac{168 n^4 - 8992/3 n^3 + 17640 n^2 - 5408/3 n - 31744}{n^4(n-1)^4}t_{(2)}^4 $\\$+ \frac{-288 n^6 + 3264 n^5 + 85152 n^4 - 1348032 n^3 + 5311104 n^2 - 4154880 n - 7372800}{n^6(n-1)^6}t_{(2)}^2 $\\$+ \frac{144 n^6 - 7920 n^5 + 200304 n^4 - 2767248 n^3 + 19684224 n^2 - 64753920 n + 77414400}{n^7(n-1)^7}$ \end{tabular}& All large $n$\\
\hline
66 & $(2^3),(3,2^2),(5),(3^2)$ & \scriptsize\begin{tabular}{c}$t_{(2)}^8 + \frac{-12 n^2 - 18 n + 4}{n^2(n-1)^2}t_{(2)}^6 $\\$+ \frac{36 n^4 - 292/3 n^3 - 2272 n^2 + 89152/3 n - 67200}{n^4(n-1)^4}t_{(2)}^4 $\\$+ \frac{-1400/3 n^5 + 4640/3 n^4 + 352520/3 n^3 - 4460720/3 n^2 + 5830400 n - 7200000}{n^6(n-1)^6}t_{(2)}^2 $\\$+ \frac{-400/3 n^5 + 35200/3 n^4 - 825200/3 n^3 + 7424000/3 n^2 - 9094400 n + 11520000}{n^7(n-1)^7}$ \end{tabular}& All large $n$\\
\hline
67 & $(2^3),(2^4),(4,2),(3^2)$ & \scriptsize\begin{tabular}{c}$t_{(2)}^6 + \frac{-7/33 n^3 - 3521/33 n^2 + 4082/3 n - 99784/33}{n^5 - 114/11 n^4 + 195/11 n^3 - 92/11 n^2}t_{(2)}^4 + $\\$\frac{16/33 n^5 - 3104/33 n^4 + 48496/11 n^3 - 162368/3 n^2 + 654400/3 n - 275200}{(n^9 - 136/11 n^8 + 434/11 n^7 - 596/11 n^6 + 379/11 n^5 - 92/11 n^4)}t_{(2)}^2 $\\$+ \frac{-160/33 n^5 + 13760/33 n^4 - 108000/11 n^3 + 3018880/33 n^2 - 11427200/33 n + 448000}{(n^11 - 147/11 n^10 + 570/11 n^9 - 1030/11 n^8 + 975/11 n^7 - 471/11 n^6 + 92/11 n^5)}$\end{tabular} & All large $n$\\
\hline
68 & $(3,2^2),(4,2),(4),(3^2)$ & \scriptsize\begin{tabular}{c}$t_{(2)}^6 + \frac{2/3 n^3 - 40 n^2 + 862/3 n - 612}{n^2(n-1)^2}t_{(2)}^4$\\$ + \frac{-8/3 n^5 + 452/3 n^4 - 4024/3 n^3 + 12412/3 n^2 - 5920 n + 7200}{n^4(n-1)^4}t_{(2)}^2 $\\$+ \frac{8/3 n^5 - 536/3 n^4 + 9256/3 n^3 - 64360/3 n^2 + 65200 n - 72000}{n^5(n-1)^5}$\end{tabular} & All large $n$\\
\hline
69 & $(3,2^2),(5),(2^2),(3,2)$ & \scriptsize$t_{(2)}^4 + \frac{-2 n^2 - 58 n + 156}{n^2(n-1)^2}t_{(2)}^2 + \frac{72 n - 216}{n^3(n-1)^3}$ & All large $n$\\
\hline
70 & $(2),(3,2^2),(4,2),(3^2)$ & $1$ & All $n$\\
\hline
71 & $(3,2^2),(3),(2^4),(3^2)$ & \scriptsize$t_{(2)}^4 + \frac{-12 n^2 + 108 n - 304}{n^2(n-1)^2}t_{(2)}^2 + \frac{12 n^2 - 172 n + 560}{n^3(n-1)^3}$ & All large $n$\\
\hline
72 & $(3,2^2),(2^4),(4,2),(3^2)$ & \scriptsize\begin{tabular}{c}$t_{(2)}^6 + \frac{-13 n^3 + 1267/15 n^2 + 970/3 n - 6504/5}{(n^5 - 46/5 n^4 + 77/5 n^3 - 36/5 n^2)}t_{(2)}^4 $\\$+ \frac{24 n^5 - 1888/15 n^4 - 14512/15 n^3 - 20152/15 n^2 + 192448/5 n - 70272}{(n^9 - 56/5 n^8 + 174/5 n^7 - 236/5 n^6 + 149/5 n^5 - 36/5 n^4)}t_{(2)}^2$\\$ + \frac{-12 n^5 + 712/15 n^4 + 12916/15 n^3 + 37144/15 n^2 - 48128 n + 97920}{(n^11 - 61/5 n^10 + 46 n^9 - 82 n^8 + 77 n^7 - 37 n^6 + 36/5 n^5)}$\end{tabular} & All large $n$\\
\hline
73 & $(5),(2^4),(2^2),(4)$ & \scriptsize$t_{(2)}^4 + \frac{52 n - 192}{n^2(n-1)^2}t_{(2)}^2 + \frac{-168 n + 504}{n^3(n-1)^3}$ & All large $n$\\
\hline
74 & $(2),(2^3),(4,2),(3^2)$ & \scriptsize$t_{(3)}^2 + \frac{15}{n(n-1)(n-2)}t_{(3)} + \frac{-3 n + 15}{n(n-1)(n-2)^2}$ & All large $n$\\
\hline
75 & $(2^3),(2^4),(2^2),(3^2)$ & \scriptsize$t_{(2)}^4 + \frac{14 n^2 - 158 n + 312}{n^2(n-1)^2}t_{(2)}^2 + \frac{-14 n^2 + 182 n - 420}{n^3(n-1)^3}$ & All large $n$\\
\hline
76 & $(2^3),(2^2),(4,2),(3^2)$ & \scriptsize$t_{(2)}^4 + \frac{-40 n^2 + 328 n - 660}{n^2(n-1)^2}t_{(2)}^2 + \frac{40 n^2 - 520 n + 1200}{n^3(n-1)^3}$ & All large $n$\\
\hline
77 & $(2^3),(3,2^2),(5),(2^2)$ & \scriptsize$t_{(2)}^4 + \frac{-62 n + 150}{n^2(n-1)^2}t_{(2)}^2 + \frac{60 n - 180}{n^3(n-1)^3}$ & All large $n$\\
\hline
78 & $(2),(3,2^2),(2^4),(3^2)$ & $1$ & All $n$\\
\hline
79 & $(5),(4,2),(4),(3^2)$ & \scriptsize$t_{(2)}^4 + \frac{-22 n^2 + 154 n - 300}{n^2(n-1)^2}t_{(2)}^2 + \frac{-12 n^3 + 184 n^2 - 844 n + 1200}{n^3(n-1)^3}$ & All large $n$\\
\hline
80 & $(2^4),(4,2),(4),(3^2)$ & \scriptsize\begin{tabular}{c}$t_{(2)}^8 + \frac{-24 n^2 + 536 n - 3720}{n^2(n-1)^2}t_{(2)}^6$\\$ + \frac{168 n^4 - 8688 n^3 + 107800 n^2 - 476880 n + 816336}{n^4(n-1)^4}t_{(2)}^4 $\\$+ \frac{-288 n^6 + 19680 n^5 - 344832 n^4 + 2102880 n^3 - 4595808 n^2 + 2652480 n - 1900800}{n^6(n-1)^6}t_{(2)}^2 $\\$+ \frac{144 n^6 - 13104 n^5 + 353520 n^4 - 4174992 n^3 + 23869440 n^2 - 64728000 n + 66528000}{n^7(n-1)^7}$ \end{tabular}& All large $n$\\
\hline
81 & $(3,2^2),(3),(2^4),(4)$ & \scriptsize$t_{(2)}^4 + \frac{-12 n^2 + 156 n - 400}{n^2(n-1)^2}t_{(2)}^2 + \frac{12 n^2 - 172 n + 560}{n^3(n-1)^3}$ & All large $n$\\
\hline
82 & $(3),(2^4),(4),(3^2)$ & \scriptsize$t_{(2)}^4 + \frac{-12 n^2 + 268 n - 624}{n^2(n-1)^2}t_{(2)}^2 + \frac{12 n^2 - 172 n + 560}{n^3(n-1)^3}$ & All large $n$\\
\hline
83 & $(2^3),(3,2^2),(4,2),(3^2)$ & \scriptsize\begin{tabular}{c}$t_{(2)}^6 + \frac{-46/7 n^3 - 454/21 n^2 + 21544/21 n - 18240/7}{(n^5 - 74/7 n^4 + 127/7 n^3 - 60/7 n^2)}t_{(2)}^4 $\\$+ \frac{24/7 n^5 + 1264/21 n^4 + 16696/21 n^3 - 616880/21 n^2 + 1041920/7 n - 1440000/7}{(n^9 - 88/7 n^8 + 282/7 n^7 - 388/7 n^6 + 247/7 n^5 - 60/7 n^4}t_{(2)}^2 $\\$+ \frac{1280/21 n^4 - 79360/21 n^3 + 154880/3 n^2 - 1625600/7 n + 2304000/7}{(n^11 - 95/7 n^10 + 370/7 n^9 - 670/7 n^8 + 635/7 n^7 - 307/7 n^6 + 60/7 n^5}$\end{tabular} & All large $n$\\
\hline
84 & $(2^3),(3),(5),(2^4)$ & \scriptsize$t_{(2)}^4 + \frac{-4 n^2 + 4 n + 272}{n^2(n-1)^2}t_{(2)}^2 + \frac{-4 n^2 + 132 n - 784}{n^3(n-1)^3}$ & All large $n$\\
\hline
85 & $(2^3),(3,2^2),(3),(2^4)$ & \scriptsize$t_{(2)}^4 + \frac{24 n^2 - 360 n + 1280}{n^2(n-1)^2}t_{(2)}^2 + \frac{-60 n^2 + 860 n - 2800}{n^3(n-1)^3}$ & All large $n$\\
\hline
86 & $(2^2),(4,2),(3,2),(3^2)$ & \scriptsize$t_{(2)}^4 + \frac{-10 n^2 + 82 n - 192}{n^2(n-1)^2}t_{(2)}^2 + \frac{16 n^2 - 208 n + 480}{n^3(n-1)^3}$ & All large $n$\\
\hline
87 & $(3,2^2),(3),(4,2),(3,2)$ & \scriptsize$t_{(2)}^2 + \frac{-8/3 n + 40/3}{n(n-1)(n-6)}$ & All large $n$\\
\hline
88 & $(2),(2^4),(4),(3^2)$ & \scriptsize$t_{(3)}^2 + \frac{-9/5 n^2 + 153/5 n - 84}{n(n-1)(n-2)}t_{(3)} + \frac{-9/20 n^2 + 129/20 n - 21}{n(n-1)(n-2)^2}$ & All large $n$\\
\hline
89 & $(3),(5),(2^4),(4,2)$ & \scriptsize$t_{(2)}^4 + \frac{-12 n^2 + 268 n - 656}{n^2(n-1)^2}t_{(2)}^2 + \frac{12 n^2 - 396 n + 1072}{n^3(n-1)^3}$ & All large $n$\\
\hline
90 & $(3),(2^4),(4,2),(3,2)$ & \scriptsize$t_{(2)}^4 + \frac{-12 n^2 + 192 n - 808}{n^2(n-1)^2}t_{(2)}^2 + \frac{12 n^2 - 396 n + 1680}{n^3(n-1)^3}$ & All large $n$\\
\hline
91 & $(3),(2^4),(3,2),(3^2)$ & \scriptsize$t_{(2)}^4 + \frac{-12 n^2 + 108 n - 304}{n^2(n-1)^2}t_{(2)}^2 + \frac{12 n^2 - 172 n + 560}{n^3(n-1)^3}$ & All large $n$\\
\hline
92 & $(3,2^2),(5),(2^4),(3^2)$ & \tiny\begin{tabular}{c}$t_{(2)}^8 + \frac{-24 n^2 + 120 n - 512}{n^2(n-1)^2}t_{(2)}^6 $\\$+ \frac{168 n^4 - 5392/3 n^3 + 1880 n^2 + 186592/3 n - 163584}{n^4(n-1)^4}t_{(2)}^4 $\\$+ \frac{-288 n^6 + 8992/3 n^5 + 27296/3 n^4 - 254176/3 n^3 - 3817088/3 n^2 + 8238080 n - 11980800}{n^6(n-1)^6}t_{(2)}^2 $\\$+ \frac{144 n^6 - 4240/3 n^5 - 45488/3 n^4 + 172816/3 n^3 + 5255552/3 n^2 - 11121920 n + 17510400}{n^7(n-1)^7}$\end{tabular} & All large $n$\\
\hline
93 & $(2),(2^4),(3,2),(3^2)$ & \scriptsize$t_{(3)}^2 + \frac{-9/5 n^2 + 153/5 n - 84}{n(n-1)(n-2)}t_{(3)} + \frac{-9/20 n^2 + 129/20 n - 21}{n(n-1)(n-2)^2}$ & All large $n$\\
\hline
94 & $(3),(4,2),(3,2),(3^2)$ & \scriptsize$t_{(2)}^2 + \frac{-8/3 n + 40/3}{n(n-1)(n-6)}$ & All large $n$\\
\hline
95 & $(3,2^2),(3),(4,2),(3^2)$ & \scriptsize$t_{(2)}^2 + \frac{-8/3 n + 40/3}{n(n-1)(n-6)}$ & All large $n$\\
\hline
96 & $(5),(2^4),(2^2),(3,2)$ & \scriptsize$t_{(2)}^4 + \frac{-20/7 n^2 - 376/7 n + 1056/7}{n^2(n-1)^2}t_{(2)}^2 + \frac{72 n - 216}{n^3(n-1)^3}$ & All large $n$\\
\hline
97 & $(5),(2^4),(4,2),(4)$ & \scriptsize\begin{tabular}{c}$t_{(2)}^4 + \frac{-12 n^4 + 340 n^3 - 3316 n^2 + 12508 n - 15600}{n^2(n-1)^2(n^2+2n-23)}t_{(2)}^2 $\\$+ \frac{12 n^4 - 468 n^3 + 4932 n^2 - 19308 n + 25200}{n^3(n-1)^3(n^2+2n-23)}$ \end{tabular}& All large $n$\\
\hline
98 & $(2),(3,2^2),(5),(4,2)$ & $1$ & All $n$\\
\hline
99 & $(3),(4,2),(4),(3^2)$ & \scriptsize$t_{(2)}^2 + \frac{2/3 n - 10/3}{n(n-1)}$ & All large $n$\\
\hline
100 & $(2^3),(3,2^2),(2^2),(3^2)$ & \scriptsize$t_{(2)}^4 + \frac{20 n^2 - 212 n + 420}{n^2(n-1)^2} t_{(2)}^2 + \frac{-20 n^2 + 260 n - 600}{n^3(n-1)^3}$ & All large $n$\\
\hline
101 & $(2),(2^3),(5),(2^4)$ & \scriptsize$t_{(3)}^2 + \frac{n^2 - 31 n + 140}{n(n-1)(n-2)}t_{(3)} + \frac{1/4 n^2 - 33/4 n + 49}{n(n-1)(n-2)^2}$ & All large $n$\\
\hline
\end{longtable}
}

\end{document}